\documentclass{article}

    \PassOptionsToPackage{numbers, compress}{natbib}

\usepackage[final]{neurips_2022}




\usepackage[utf8]{inputenc} 
\usepackage[T1]{fontenc}    
\usepackage{url}            
\usepackage{booktabs}       
\usepackage{amsfonts}       
\usepackage{nicefrac}       
\usepackage{microtype}      
\usepackage{xcolor}         
\usepackage{amsmath,amsthm,amssymb,bm,bbm}
\usepackage{algorithmic}
\usepackage[ruled,vlined]{algorithm2e}
\usepackage{accents}
\usepackage{graphicx}
\usepackage{babel,blindtext}

\newcommand{\bmat}[1]{\begin{bmatrix}#1\end{bmatrix}}
\newcommand{\tp}{\mathsf{T}}
\newcommand{\norm}[1]{\lVert{#1}\rVert}
\usepackage[bookmarks=false, hidelinks]{hyperref}
\newcommand{\field}[1]{\mathbb{#1}}
\newcommand{\R}{\field{R}}

\DeclareMathOperator*{\tr}{tr}

\DeclareMathOperator*{\diag}{diag}
\DeclareMathOperator*{\conv}{conv}
\DeclareMathOperator*{\dist}{dist}
\DeclareMathOperator*{\lmi}{LMI}
\DeclareMathOperator*{\dom}{dom}

\newtheorem{thm}{Theorem}
\newtheorem{lem}{Lemma}
\newtheorem{prop}{Proposition}
\newtheorem{rem}{Remark}
\newtheorem{defn}{Definition}
\newtheorem{cor}{Corollary}

\newtheorem{assumption}{Assumption}

\title{Global Convergence of Direct Policy Search for State-Feedback $\mathcal{H}_\infty$ Robust Control: A Revisit of Nonsmooth Synthesis with Goldstein Subdifferential}

\author{%
  Xingang Guo, \,\,\,\,\,\,\,\,Bin ~Hu\\
  Department of Electrical and Computer Engineering\\
  Coordinated Science Laboratory\\
  University of Illinois at Urbana-Champaign\\
     \texttt{\{xingang2,binhu7\}@illinois.edu}
}

\begin{document}

\maketitle

\begin{abstract}
Direct policy search has been widely applied in modern reinforcement learning and continuous control. However, the theoretical properties of direct policy search on nonsmooth robust control synthesis have not been fully understood. The optimal $\mathcal{H}_\infty$ control framework aims at designing a policy to minimize the closed-loop $\mathcal{H}_\infty$ norm, and is arguably the most fundamental robust control paradigm. In this work, we show that direct policy search is guaranteed to find the global solution of the robust $\mathcal{H}_\infty$ state-feedback control design problem. Notice that policy search for optimal $\mathcal{H}_\infty$ control leads to a constrained nonconvex nonsmooth optimization problem, where the nonconvex feasible set consists of all the policies stabilizing the closed-loop dynamics. We show that for this nonsmooth optimization problem, all Clarke stationary points are global minimum. Next, we identify the coerciveness of the closed-loop $\mathcal{H}_\infty$ objective function, and prove that all the sublevel sets of the resultant policy search problem are compact. Based on these properties, we show that Goldstein's subgradient method and its implementable variants can be guaranteed to stay in the nonconvex feasible set and eventually find the global optimal solution of the $\mathcal{H}_\infty$ state-feedback synthesis problem. Our work builds a new connection between nonconvex nonsmooth optimization theory and robust control, leading to an interesting global convergence result for direct policy search on optimal $\mathcal{H}_\infty$ synthesis.
\end{abstract}

\section{Introduction}

Reinforcement learning (RL) has achieved impressive performance on many continuous control tasks~\cite{schulman2015high,lillicrap2015continuous}, and
policy optimization is one of the main workhorses for such applications \cite{duan2016benchmarking,sutton2000policy,schulman2015trust,schulman2017proximal}. 
Recently, there have been extensive research efforts studying the global convergence properties of policy optimization methods on benchmark control problems including linear quadratic regulator (LQR)~\cite{pmlr-v80-fazel18a,bu2019lqr,malik2019derivative,yang2019provably,mohammadi2021convergence,furieri2020learning,hambly2021policy}, stabilization \cite{perdomo2021stabilizing,ozaslan2022computing}, linear robust/risk-sensitive control~\cite{zhang2021policy,zhang2020stability,gravell2020learning,zhang2021derivative,zhao2021primal,cui2022mixed}, Markov jump linear quadratic control~\cite{jansch2020convergence,jansch2020policyMDP,jansch2020policy,rathod2021global}, Lur'e system control~\cite{qu2021exploiting}, output feedback control~\cite{fatkhullin2020optimizing,zheng2021analysis,li2021distributed,duan2021optimization,duan2022optimization,mohammadi2021lack,zheng2022escaping}, and dynamic filtering \cite{umenberger2022globally}.
For all these benchmark problems, the objective function in the policy optimization formulation is always differentiable over the entire feasible set, and the existing convergence theory heavily relies on this fact.
Consequently, an important open question remains whether direct policy search can enjoy similar global convergence properties when applied to the famous $\mathcal{H}_\infty$ control problem whose objective function can be non-differentiable over certain points in the policy space \cite{apkarian2006controller,apkarian2006nonsmooth,arzelier2011h2,gumussoy2009multiobjective,burke2020gradient,curtis2017bfgs,noll2005spectral}. 
Different from LQR which considers stochastic disturbance sequences,  $\mathcal{H}_\infty$ control directly addresses the worst-case disturbance, and provides arguably the most fundamental robust control paradigm  \cite{zhou96,Dullerud99,skogestad2007multivariable,basar95,doyle1988state,Gahinet1994}. 
Regarding the connection with RL, it has also been shown that $\mathcal{H}_\infty$ control can be applied to stabilize the training of adversarial RL schemes in the linear quadratic setup \cite[Section 5]{zhang2020stability}.
Given the fundamental importance of $\mathcal{H}_\infty$ control,  we view it as an important benchmark for understanding the theoretical properties of direct policy search in the context of robust control and adversarial RL.  In this work, we study and prove the global convergence properties of direct policy search on the $\mathcal{H}_\infty$ state-feedback synthesis problem.

The objective of the $\mathcal{H}_\infty$ state-feedback synthesis is to design a linear state-feedback policy that stabilizes the closed-loop system and minimizes the $\mathcal{H}_\infty$ norm from the disturbance to a performance signal at the same time.
The design goal is also equivalent to synthesizing a state-feedback policy that minimizes a quadratic cost subject to the worst-case disturbance. We will present the problem formulation for the $\mathcal{H}_\infty$ state-feedback synthesis and discuss such connections in Section~\ref{sec:PF}. Essentially, $\mathcal{H}_\infty$ state-feedback synthesis can be formulated as a constrained policy optimization
problem $\min_{K\in\mathcal{K}} J(K)$, where the decision variable $K$ is a matrix parameterizing the linear state-feedback policy, the objective function $J(K)$ is the closed-loop $\mathcal{H}_\infty$-norm for given $K$, and the feasible set $\mathcal{K}$ consists of all the linear state-feedback policies stabilizing the closed-loop dynamics. Notice that the feasible set for the $\mathcal{H}_\infty$ state-feedback control problem is the same as the nonconvex feasible set for the LQR policy search problem~\cite{pmlr-v80-fazel18a,bu2019lqr}. However, the objective function $J(K)$ for the $\mathcal{H}_\infty$ control problem
can be non-differential over certain feasible points, introducing new difficulty to direct policy search. There has been a large family of nonsmooth $\mathcal{H}_\infty$ policy search algorithms developed based on the concept of Clarke subdifferential \cite{apkarian2006controller,apkarian2006nonsmooth,arzelier2011h2,gumussoy2009multiobjective,burke2020gradient,curtis2017bfgs}. 
However, a satisfying global convergence theory is still missing from the literature. Our paper bridges this gap by making the following two contributions.
\begin{enumerate}
\item We show that all Clarke stationary points for the $\mathcal{H}_\infty$ state-feedback policy search problem are also global minimum. 
\item We identify the coerciveness of the $\mathcal{H}_\infty$ cost function and use this property to show that Goldstein's subgradient method \cite{goldstein1977optimization} and its implementable variants \cite{pmlr-v119-zhang20p, davis2021gradient,burke2020gradient,burke2005robust,kiwiel2007convergence,kiwiel2010nonderivative} can be guaranteed to stay in the nonconvex feasible set of stabilizing policies during the optimization process and eventually find the global optimal solution of the $\mathcal{H}_\infty$ state-feedback control problem. Finite-time complexity bounds for finding $(\delta,\epsilon)$-stationary points are also provided.
\end{enumerate}
Our work sheds new light on the theoretical properties of policy optimization methods on $\mathcal{H}_\infty$ control problems, and serves as a meaningful initial step towards a general global convergence theory of direct policy search on nonsmooth robust control synthesis.

Finally, it is worth clarifying the differences between $\mathcal{H}_\infty$ control and mixed $\mathcal{H}_2/\mathcal{H}_\infty$ design. For mixed $\mathcal{H}_2/\mathcal{H}_\infty$ control, the objective is to design a stabilizing policy that minimizes an $\mathcal{H}_2$ performance bound and satisfies an $\mathcal{H}_\infty$ constraint at the same time \cite{glover1988state,khargonekar1991mixed,kaminer1993mixed,mustafa1991lqg}. In other words, mixed $\mathcal{H}_2/\mathcal{H}_\infty$ control aims at improving the average $\mathcal{H}_2$ performance while ``maintaining" a certain level of robustness by keeping the closed-loop $\mathcal{H}_\infty$ norm to be smaller than a pre-specified number. In contrast, $\mathcal{H}_\infty$ control aims at ``improving" the system robustness and the worst-case performance via achieving the smallest closed-loop $\mathcal{H}_\infty$ norm. In \cite{zhang2021policy}, it has been shown that the natural policy gradient method initialized from a policy satisfying the $\mathcal{H}_\infty$ constraint can be guaranteed to maintain the $\mathcal{H}_\infty$ requirement during the optimization process and eventually converge to the optimal solution of the mixed design problem. However,  notice that the objective function for the mixed $\mathcal{H}_2/\mathcal{H}_\infty$ control problem is still differentiable over all the feasible points, and hence the analysis technique in \cite{zhang2021policy} cannot be applied to our $\mathcal{H}_\infty$ control setting. 
More discussions on the connections and differences between these two problems will be given in the supplementary material.

\section{Problem Formulation and Preliminaries}
\label{sec:PF}
\subsection{Notation}
The set of $p$-dimensional real vectors is denoted as $\R^p$.
For a matrix $A$, we use the notation \(A^\tp\), \( \|A\| \),  \(\tr{A}\), \(\sigma_{\min}(A)\), \(\norm{A}_2\), and \(\rho(A) \) to denote its transpose, largest singular value, trace, smallest singular value, Frobenius norm, and spectral radius, respectively. 
When a
matrix $P$ is negative semidefinite (definite), we will use the notation $P \preceq (\prec) 0$. When $P$ is positive
semidefinite (definite), we use the notation $P \succeq (\succ) 0$.
Consider a (real) sequence $\mathbf{u}:=\{u_0,u_1,\cdots\}$ where
$u_t \in \R^{n_u}$ for all $t$. This sequence is said to be in $\ell_2^{n_u}$
if $ \sum_{t=0}^\infty \| u_t\|^2<\infty$ where $\|u_t\|$ denotes
the standard (vector) 2-norm of $u_t$.  In addition, the $2$-norm for
$\mathbf{u} \in \ell_2^{n_u}$ is defined as
$\|\mathbf{u}\|^2:=\sum_{t=0}^\infty \| u_t\|^2$.

\subsection{Problem statement: $\mathcal{H}_\infty$ state-feedback synthesis and a policy optimization formulation}

We consider the following linear time-invariant (LTI)  system
\begin{align}\label{eq:lti1}
x_{t+1}=Ax_t+Bu_t+w_t, \,\,x_0=0
\end{align}
where $x_t\in\R^{n_x}$ is the state, $u_t\in\R^{n_u}$ is the control action, and $w_t\in\R^{n_w}$ is the disturbance. We have $A\in \R^{n_x\times n_x}$, $B\in \R^{n_x\times n_u}$, and $n_w=n_x$. 
We denote $\mathbf{x}:=\{x_0,x_1,\cdots\}$, $\mathbf{u}:=\{u_0,u_1,\cdots\}$, and $\mathbf{w}:=\{w_0, w_1, \cdots\}$.
The initial condition is fixed as $x_0=0$.
The objective of $\mathcal{H}_\infty$ control is to choose $\{u_t\}$ to minimize the quadratic cost $\sum_{t=0}^\infty (x_t^\tp Q x_t+u_t^\tp R u_t)$ in the presence of the worst-case $\ell_2$ disturbance satisfying $\norm{\mathbf{w}}\le 1$. In this paper, the following assumption is adopted.

\begin{assumption}\label{assump1}
The matrices $Q$ and $R$ are positive definite. The matrix pair $(A,B)$ is stabilizable.
\end{assumption}

 In $\mathcal{H}_\infty$ control, $\{w_t\}$ is considered to be the worst-case disturbance satisfying the $\ell_2$ norm bound $\norm{\mathbf{w}}\le 1$, and can be chosen in an adversarial manner. 
This is different from LQR which makes stochastic assumptions on $\{w_t\}$. 
Without loss of generality, we have chosen the $\ell_2$ upper bound on $\mathbf{w}$ to be $1$. In principle, we can formulate the $\mathcal{H}_\infty$ control problem with any arbitrary $\ell_2$ upper bound on $\mathbf{w}$, and there is no technical difference. We will provide more explanations on this fact in the supplementary material.
Therefore, $\mathcal{H}_\infty$ control can be formulated  as the following minimax problem
\begin{align}\label{eq:minmax}
\min_{\mathbf{u}}\max_{\mathbf{w}:\norm{\mathbf{w}}\le 1} \sum_{t=0}^\infty (x_t^\tp Q x_t+ u_t^\tp R u_t)
\end{align}
Under Assumption \ref{assump1}, it is well known that 
the optimal solution for \eqref{eq:minmax} can be achieved using a linear state-feedback policy $u_t=-Kx_t$ (see \cite{basar95}).
Given any $K$, 
the LTI system \eqref{eq:lti1} can be rewritten as
\begin{align}\label{eq:lti2}
x_{t+1}=(A-BK)x_t+w_t, \,x_0=0.
\end{align}
Now we define $z_t=(Q+K^\tp R K)^{\frac{1}{2}}x_t$. We have $\norm{z_t}^2=x_t^\tp (Q+K^\tp R K) x_t=x_t^\tp Q x_t+u_t^\tp R u_t$. 
We denote $\mathbf{z}:=\{z_0,z_1,\cdots\}$. If $\mathbf{x}\in \ell_2^{n_x}$, then we have $\norm{\mathbf{z}}^2=\sum_{t=0}^\infty (x_t^\tp Q x_t+u_t^\tp R u_t)<+\infty$. 
Therefore, the closed-loop LTI system \eqref{eq:lti2} can be viewed as a linear operator mapping any disturbance sequence $\{w_t\}$ to another sequence $\{z_t\}$. We denote this operator as $G_K$, where the subscript highlights the dependence of this operator on $K$.
If $K$ is stabilizing, i.e. $\rho(A-BK)<1$, then $G_K$ is bounded in the sense that it maps any $\ell_2$ sequence  $\mathbf{w}$ to 
another sequence $\mathbf{z}$ in $\ell_2^{n_x}$. For any stabilizing $K$, the $\ell_2\rightarrow \ell_2$ induced norm of $G_K$ can be defined as:
\begin{align}
\norm{G_K}_{2\rightarrow 2}:=\sup_{0\neq \norm{\mathbf{w}}\le 1}\frac{\norm{\mathbf{z}}}{\norm{\mathbf{w}}}
\end{align}
Since $G_K$ is a linear operator, it is straightforward to show
\begin{align*}
\norm{G_K}_{2\rightarrow 2}^2:=\max_{\mathbf{w}:\norm{\mathbf{w}}\le 1} \sum_{t=0}^\infty x_t^\tp (Q+K^\tp R K) x_t=\max_{\mathbf{w}:\norm{\mathbf{w}}\le 1} \sum_{t=0}^\infty (x_t^\tp Q x_t+u_t^\tp R u_t).
\end{align*}

Therefore, the minimax optimization problem \eqref{eq:minmax} can be rewritten as the policy optimization problem:
$\min_{K\in\mathcal{K}}\norm{G_K}_{2\rightarrow 2}^2$, where $\mathcal{K}$ is the set of all linear state-feedback stabilizing policies, i.e. $\mathcal{K}=\{K\in\R^{n_x\times n_u}: \,\rho(A-BK)<1\}$.  In the robust control literature \cite{apkarian2006controller,apkarian2006nonsmooth,arzelier2011h2,gumussoy2009multiobjective,burke2020gradient,curtis2017bfgs}, it is standard to drop the square in the cost function and just reformulate \eqref{eq:minmax} as $\min_{K\in\mathcal{K}} \norm{G_K}_{2\rightarrow 2}$. This is exactly the policy optimization formulation for $\mathcal{H}_\infty$ state-feedback control.
The main reason why
this problem is termed as $\mathcal{H}_\infty$ state-feedback control is that in the frequency domain, $G_K$ can be viewed as a transfer function which lives in the Hardy $\mathcal{H}_\infty$ space and has an $\mathcal{H}_\infty$ norm being  exactly equal to $\norm{G_K}_{2\rightarrow 2}$. 
Applying the frequency-domain formula for the $\mathcal{H}_\infty$ norm, we can calculate $\norm{G_K}_{2\rightarrow 2}$ as
\begin{align}\label{eq:hinfcost}
\norm{G_K}_{2\rightarrow 2}=\sup_{\omega\in[0, 2\pi]}\lambda_{\max}^{1/2}\big((e^{-j\omega}I-A+BK)^{-\tp}(Q+K^{\tp}RK)(e^{j\omega}I-A+BK)^{-1}\big),
\end{align}
where $I$ is the identity matrix, and $\lambda_{\max}$ denotes the largest eigenvalue of a given symmetric matrix.
Therefore, eventually the $\mathcal{H}_\infty$ state-feedback control problem can be formulated as 
\begin{align}\label{eq:hinfopt}
\min_{K\in\mathcal{K}} J(K),
\end{align}
where $J(K)$ is equal to the $\mathcal{H}_\infty$ norm specified by \eqref{eq:hinfcost}. Classical $\mathcal{H}_\infty$ control theory typically solves \eqref{eq:hinfopt} via introducing extra Lyapunov variables and
reparameterizing the problem into a higher-dimensional convex domain
over which convex optimization algorithms can be applied~\cite{zhou96,Dullerud99,befb94}.
In this paper, we revisit \eqref{eq:hinfopt} as a benchmark for direct policy search, and discuss how to search the optimal solution of \eqref{eq:hinfopt} in the policy space directly.  Applying direct policy search to address \eqref{eq:hinfopt} leads to a nonconvex nonsmooth optimization problem.
A main technical challenge is that the objective function \eqref{eq:hinfcost} can be non-differentiable over some important feasible points \cite{apkarian2006controller,apkarian2006nonsmooth,arzelier2011h2,gumussoy2009multiobjective,burke2020gradient,curtis2017bfgs}.

\subsection{Direct policy search: A nonsmooth optimization perspective}
Now we briefly review several key facts known for the $\mathcal{H}_\infty$ policy optimization problem \eqref{eq:hinfopt}. 
\begin{prop} \label{prop:1}
The set $\mathcal{K}=\{K: \rho(A-BK)<1\}$ is open. In general, it can be unbounded and nonconvex. The cost function \eqref{eq:hinfcost} is continuous and nonconvex in $K$.
\end{prop}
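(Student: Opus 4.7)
My plan is to verify the four claims of Proposition~\ref{prop:1} separately, each of which reduces to continuity of the spectral radius together with, where needed, a small explicit counterexample. I would first establish openness of $\mathcal{K}$ by invoking continuity of the spectral radius: the coefficients of the characteristic polynomial of $A-BK$ are polynomial in the entries of $K$, the roots of a monic polynomial depend continuously on its coefficients, so $K\mapsto\rho(A-BK)$ is continuous, and $\mathcal{K}$ is the preimage of the open set $[0,1)$, hence open.

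For unboundedness and nonconvexity of $\mathcal{K}$, I would produce small witnesses. With $A=B=I_{2}$, the family $K_t=\bmat{1 & t\\ 0 & 1}$ gives $A-BK_t=-\bmat{0 & t\\ 0 & 0}$, which is nilpotent for every $t\in\R$, so $\mathcal{K}$ contains an unbounded ray. For nonconvexity, the pair $K_1=\bmat{1 & 2\\ 0 & 1}$, $K_2=\bmat{1 & 0\\ 2 & 1}$ both yield nilpotent closed loops, whereas $A-B(K_1+K_2)/2=-\bmat{0 & 1\\ 1 & 0}$ has eigenvalues $\pm 1$, so $(K_1+K_2)/2$ lies outside $\mathcal{K}$. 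Since a convex function must have a convex domain, this pair simultaneously witnesses that $J$ cannot be convex either.

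For continuity of $J$, I would introduce $M(K,\omega):=(e^{-j\omega}I-A+BK)^{-\tp}(Q+K^\tp RK)(e^{j\omega}I-A+BK)^{-1}$ and show that this is well defined on $\mathcal{K}\times[0,2\pi]$, since $K\in\mathcal{K}$ forces $e^{j\omega}$ off the spectrum of $A-BK$. Continuity of $\rho$ guarantees that on a small neighborhood of any fixed $K_0\in\mathcal{K}$ the eigenvalues of $A-BK$ remain uniformly strictly inside the unit disk, so $(e^{j\omega}I-A+BK)^{-1}$ is jointly continuous in $(K,\omega)$ on that neighborhood times $[0,2\pi]$. Composing with the continuous map $M\mapsto\lambda_{\max}^{1/2}(M)$ on Hermitian matrices yields a jointly continuous integrand, and the maximum over the compact interval $[0,2\pi]$ preserves continuity by the standard max-over-compact argument.

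I expect the continuity of $J$ to be the only step with any real content, and even there the main issue is ensuring local uniform boundedness of the matrix inverse, which is handled cleanly by continuity of $\rho$ on the open set $\mathcal{K}$. All remaining pieces are immediate from continuous dependence of eigenvalues on matrix entries and the small numerical witnesses constructed above.
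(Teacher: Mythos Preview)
Your proposal is correct and follows essentially the same strategy as the paper: openness via continuity of the spectral radius, and explicit small witnesses for unboundedness and nonconvexity of $\mathcal{K}$. Your continuity argument for $J$ (joint continuity of the integrand plus supremum over the compact frequency interval) is in fact more careful than the paper's, which simply asserts continuity from the analytic form.

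The one substantive difference is your treatment of the nonconvexity of $J$. You deduce it from the nonconvexity of the domain $\mathcal{K}$, which is correct by definition but is the trivial part of the claim: it leaves open whether $J$ behaves convexly along every segment that happens to lie entirely inside $\mathcal{K}$. The paper instead produces $K,K'\in\mathcal{K}$ with midpoint $K''=(K+K')/2$ also in $\mathcal{K}$ and verifies numerically that $J(K'')>\tfrac{1}{2}\bigl(J(K)+J(K')\bigr)$. This is the statement that actually matters for the difficulty of direct policy search, and is almost certainly what the proposition intends by ``nonconvex in $K$''. Your domain-based shortcut is valid as stated, but you may want to supplement it with a witness of this stronger type.
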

See \cite{pmlr-v80-fazel18a,bu2019topological} for some related proofs.  We have also included more explanations in the supplementary material. An immediate consequence is that \eqref{eq:hinfopt} becomes a nonconvex optimization problem.
Another important fact is that the objective function \eqref{eq:hinfcost} is also nonsmooth. As a matter of fact, \eqref{eq:hinfcost} is subject to two sources of nonsmoothness.  Based on \eqref{eq:hinfcost}, we can see that the largest eigenvalue for a fixed frequency $\omega$ is nonsmooth, and the optimization step over $\omega\in [0, 2\pi]$ is also nonsmooth. As a matter of fact, the $\mathcal{H}_\infty$ objective function \eqref{eq:hinfcost} can be non-differentiable over important feasible points, e.g. optimal points. 
Fortunately, it is well known\footnote{We cannot find a formal statement of Proposition \ref{prop:regular} in the literature. However, based on our discussion with other researchers who have worked on nonsmooth $\mathcal{H}_\infty$ synthesis for long time, this fact is well known and hence we do not claim any credits in deriving this result. As a matter of fact, although not explicitly stated, the proof of Proposition \ref{prop:regular} is hinted in the last paragraph of \cite[Section III]{apkarian2006nonsmooth} given the facts that the $\mathcal{H}_\infty$ norm is a convex function over the Hardy $\mathcal{H}_\infty$ space (which is a Banach space) and the mapping from $K\in\mathcal{K}$ to the (infinite-dimensional) Hardy $\mathcal{H}_\infty$ space is strictly differentiable.
For completeness, a simple proof of Proposition~\ref{prop:regular} based on Clarke's chain rule \cite{clarke1990optimization} is included in the supplementary material.} that the $\mathcal{H}_\infty$ objective function \eqref{eq:hinfcost} has the following desired property so it is Clarke subdifferentiable. 
\begin{prop}\label{prop:regular}
The $\mathcal{H}_\infty$ objective function \eqref{eq:hinfcost} is  locally Lipschitz and subdifferentially regular over the stabilizing feasible set $\mathcal{K}$.
\end{prop}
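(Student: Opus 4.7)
The plan is to follow the hint of the footnote: decompose $J$ as a composition of a convex function on a Banach space with a strictly differentiable mapping, and then invoke Clarke's chain rule. Concretely, write $J(K)=\nu(\Phi(K))$, where $\Phi:\mathcal{K}\to\mathcal{H}_\infty$ sends a stabilizing gain $K$ to the closed-loop transfer function
\begin{equation*}
\Phi(K)(z) \,=\, (Q+K^\tp R K)^{1/2}(zI-A+BK)^{-1},
\end{equation*}
viewed as an element of the Hardy space $\mathcal{H}_\infty$ of matrix-valued functions analytic on the exterior of the unit disk, and $\nu(\cdot)=\|\cdot\|_{\mathcal{H}_\infty}$ denotes the $\mathcal{H}_\infty$ norm on that Banach space. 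The identity $J(K)=\nu(\Phi(K))$ follows directly from \eqref{eq:hinfcost}.

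Next I would handle the outer map. Since $\nu$ is a norm on the Banach space $\mathcal{H}_\infty$, it is convex and globally $1$-Lipschitz. A convex continuous function on a Banach space is automatically locally Lipschitz and Clarke subdifferentially regular, with its Clarke subdifferential agreeing with the convex subdifferential. So the regularity required of the outer factor is free.

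The substantive step is to show that $\Phi$ is strictly Fr\'echet differentiable as a map from the open set $\mathcal{K}\subset\R^{n_x\times n_u}$ into $\mathcal{H}_\infty$. Fix $K_0\in\mathcal{K}$, so $\rho(A-BK_0)<1$. On a small neighborhood $U$ of $K_0$ in $\mathcal{K}$, the resolvent $(zI-A+BK)^{-1}$ extends analytically to an annulus containing the unit circle, and the resolvent identity together with a Neumann-series bound yields a uniform sup-norm bound on the circle $|z|=1$, uniformly over $K\in U$. This uniform bound lets one pass from the pointwise (in $z$) smoothness of $K\mapsto (zI-A+BK)^{-1}$ to Fr\'echet smoothness in the sup-norm topology; the factor $(Q+K^\tp R K)^{1/2}$ is real-analytic in $K$ by Assumption~\ref{assump1}. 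Together these give a continuous Fr\'echet derivative of $\Phi$ on $U$, hence strict differentiability at every $K_0\in\mathcal{K}$.

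Finally, I would invoke Clarke's chain rule (\cite{clarke1990optimization}, Thm.\ 2.3.10): the composition of a strictly differentiable map with a locally Lipschitz, subdifferentially regular function is itself locally Lipschitz and subdifferentially regular, with Clarke subdifferential $\partial J(K)=\Phi'(K)^{*}\partial\nu(\Phi(K))$. Both conclusions of Proposition \ref{prop:regular} follow. The main obstacle is the strict differentiability of $\Phi$ into the infinite-dimensional target $\mathcal{H}_\infty$: the pointwise calculus in $z$ is routine, but upgrading it to a continuously varying derivative in the sup-norm topology is where the uniform resolvent bound carries the whole argument, and this is the step that deserves the most care in a full write-up.
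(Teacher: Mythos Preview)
Your proposal is correct and follows essentially the same route as the paper: both write $J$ as the composition of the $\mathcal{H}_\infty$ norm (convex on a Banach space, hence regular) with the map $K\mapsto (Q+K^\tp RK)^{1/2}(zI-A+BK)^{-1}$, verify strict differentiability of the latter, and invoke Clarke's chain rule (Theorem~2.3.10 in Clarke). The only minor variation is that the paper checks strict differentiability via the time-domain Toeplitz operator representation of $G_K$ rather than your frequency-domain resolvent bounds, but this is a matter of taste rather than substance.
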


Recall that  $J:\mathcal{K}\rightarrow \R$ is locally Lipschitz if for
any bounded $S\subset \mathcal{K}$, there exists a constant $L > 0$ such that $|J(K)-J(K')|\le L \norm{K-K'}_2$ for all $K,K'\in S$. 
Based on Rademacher's theorem, a locally Lipschitz function is differentiable almost everywhere, and the Clarke subdifferential is well defined for all feasible points. Formally, the Clarke subdifferential is defined as 
\begin{align}
\partial_C J(K):=\conv\{\lim_{i\rightarrow \infty}\nabla J(K_i):K_i\rightarrow K,\,K_i\in\dom(\nabla J)\subset \mathcal{K}\}
\end{align}
where $\conv$ denotes the convex hull. Then we know that the Clarke subdifferential for the $\mathcal{H}_\infty$ objective function \eqref{eq:hinfcost} is well defined for all $K\in \mathcal{K}$. We say that $K$ is a Clarke stationary point if  $0\in \partial_C J(K)$. The following fact is also well known.
\begin{prop} \label{pro3}
If $K$ is a local min of $J$, then $0\in\partial_C J(K)$ and $K$ is a Clarke stationary point.
\end{prop}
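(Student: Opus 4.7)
The plan is to exploit the standard duality between the Clarke subdifferential and the Clarke generalized directional derivative for locally Lipschitz functions. Recall that for a locally Lipschitz function $J$, the Clarke generalized directional derivative in direction $d$ at $K$ is
\begin{align*}
J^\circ(K; d) := \limsup_{K' \to K,\, t \downarrow 0} \frac{J(K' + t d) - J(K')}{t},
\end{align*}
and the Clarke subdifferential admits the dual characterization
$\partial_C J(K) = \{ v : \langle v, d \rangle \le J^\circ(K; d) \text{ for all } d \}$
(this is a textbook consequence of the definition of $\partial_C J$ via convex hulls of limiting gradients combined with Rademacher's theorem; see \cite{clarke1990optimization}). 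Proposition~\ref{prop:regular} guarantees that $J$ is locally Lipschitz on $\mathcal{K}$, so both objects above are well defined.

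First I would use the openness of $\mathcal{K}$ from Proposition~\ref{prop:1}: since $K$ is a local minimum of $J$ and $\mathcal{K}$ is open, there exists a neighborhood $U \subset \mathcal{K}$ of $K$ on which $J(K') \ge J(K)$. In particular, for any direction $d$ and all sufficiently small $t > 0$, $K + t d \in U$, so
\begin{align*}
\frac{J(K + t d) - J(K)}{t} \ge 0.
\end{align*}
Taking $\limsup$ as $t \downarrow 0$ along the particular sequence $K' \equiv K$ that appears in the defining $\limsup$ for $J^\circ(K; d)$, I obtain $J^\circ(K; d) \ge 0$ for every direction $d$.

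Finally, applying the dual characterization of $\partial_C J(K)$ with $v = 0$, I get $\langle 0, d \rangle = 0 \le J^\circ(K; d)$ for every $d$, which yields $0 \in \partial_C J(K)$, i.e., $K$ is a Clarke stationary point. There is no real obstacle here; the only subtlety worth flagging is that the standard inequality $J^\circ(K; d) \ge 0$ at a local minimum uses the fact that $\mathcal{K}$ is open, so that small perturbations in arbitrary directions $d$ stay in the feasible set where the local minimality inequality applies. Because $J$ is subdifferentially regular (Proposition~\ref{prop:regular}), the Clarke directional derivative even coincides with the ordinary one-sided directional derivative, which makes the argument completely transparent, but regularity is not strictly needed for this proposition.
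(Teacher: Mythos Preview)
Your proof is correct and is precisely the standard textbook argument (cf.\ \cite[Proposition 2.3.2]{clarke1990optimization}). The paper does not give its own proof of Proposition~\ref{pro3}; it simply states the result as well known, so there is nothing to compare against.
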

Under Assumption \ref{assump1}, it is well known that there exists $K^*\in \mathcal{K}$ achieving the minimum of~\eqref{eq:hinfopt}. Since $\mathcal{K}$ is an open set, $K^*$ has to be an interior point of $\mathcal{K}$ and hence $K^*$ has to be a Clarke stationary point.  In Section \ref{sec:land}, we will prove that any Clarke stationary points for \eqref{eq:hinfopt} are actually global minimum.

Now we briefly elaborate on the subdifferentially regular property stated in Proposition \ref{prop:regular}.
For any given direction $d$ (which has the same dimension as $K$), the generalized Clarke directional derivative of $J$ is defined as
\begin{align}
J^{\circ}(K,d):=\lim_{K'\rightarrow K}\sup_{t\searrow 0} \frac{J(K'+td)-J(K')}{t}.
\end{align}
In contrast, the (ordinary) directional derivative is defined as follows (when existing)
\begin{align}
J'(K,d):=\lim_{t\searrow 0} \frac{J(K+td)-J(K)}{t}.
\end{align}

In general, the Clarke directional derivative can be different from the (ordinary) directional derivative. Sometimes
 the ordinary directional derivative may not even exist.
The objective function $J(K)$ is subdifferentially regular if for every $K\in \mathcal{K}$, the ordinary directional
derivative always exists and coincides with the generalized one for every direction, i.e. $J'(K,d)=J^{\circ}(K,d)$.
The most important consequence of the subdifferentially regular property is given as follows.
\begin{cor} \label{cor1}
Suppose $K ^\dag\in\mathcal{K}$ is a  Clarke stationary point for $J$. If $J$ is subdifferentially regular, then the directional derivatives $J'(K^\dag, d)$ are non-negative for all $d$.
\end{cor}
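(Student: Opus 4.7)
The plan is to exploit the well-known support function characterization of the Clarke subdifferential, which ties $\partial_C J$ back to the generalized directional derivative $J^{\circ}$, and then convert a statement about $J^{\circ}$ into one about the ordinary directional derivative $J'$ using the regularity hypothesis.

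Concretely, I would first recall the standard identity
\begin{align*}
J^{\circ}(K^\dag, d) = \max_{g \in \partial_C J(K^\dag)} \langle g, d \rangle,
\end{align*}
which holds whenever $J$ is locally Lipschitz, and in particular on $\mathcal{K}$ by Proposition \ref{prop:regular}. Here $\langle \cdot, \cdot \rangle$ is the trace inner product on matrices of the appropriate size. This duality between $\partial_C J$ and $J^{\circ}$ is a cornerstone of Clarke's nonsmooth calculus and would be cited rather than reproved.

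Next, since $K^\dag$ is a Clarke stationary point, we have $0 \in \partial_C J(K^\dag)$. Substituting the zero subgradient into the support-function identity immediately yields
\begin{align*}
J^{\circ}(K^\dag, d) \ge \langle 0, d \rangle = 0 \quad \text{for every direction } d.
\end{align*}
Finally, the subdifferential regularity of $J$ on $\mathcal{K}$ guarantees that the ordinary directional derivative $J'(K^\dag, d)$ exists and equals $J^{\circ}(K^\dag, d)$ for every $d$, so $J'(K^\dag, d) \ge 0$ as claimed.

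There is essentially no hard step here; the proof is a two-line consequence of standard Clarke calculus once Proposition \ref{prop:regular} is in hand. The only subtlety worth flagging is that without regularity, $J'$ need not exist, and even when it exists it can be strictly smaller than $J^{\circ}$, so the conclusion would fail without the regularity hypothesis. Thus the real content of the corollary is that Proposition \ref{prop:regular} lets us upgrade generalized stationarity to genuine first-order non-decrease along every direction, a property that will be the crucial bridge in Section \ref{sec:land} for showing that every Clarke stationary point of the $\mathcal{H}_\infty$ policy search problem is a global minimum.
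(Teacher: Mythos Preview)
Your proof is correct and is exactly the standard argument: use the support-function identity $J^{\circ}(K^\dag,d)=\max_{g\in\partial_C J(K^\dag)}\langle g,d\rangle$, plug in $g=0$, and invoke regularity to pass from $J^{\circ}$ to $J'$. The paper does not spell out a proof but simply cites \cite[Theorem~10.1]{rockafellar2009variational}, which encodes precisely this reasoning, so your approach coincides with the paper's.
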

See \cite[Theorem 10.1]{rockafellar2009variational} for related proofs and more discussions. Notice that having non-negative directional derivatives does not mean that the point $K^\dag$ is a local minimum. Nevertheless, the above fact will be used in our main theoretical developments.
Now we briefly summarize two key difficulties in establishing a global convergence theory for direct policy search on the $\mathcal{H}_\infty$ state-feedback control problem \eqref{eq:hinfopt}. First, it is unclear whether the direct policy search method will get stuck at some local minimum. Second, it is challenging to guarantee the direct policy search method to stay in the nonconvex feasible set $\mathcal{K}$ during the optimization process. Since $\mathcal{K}$ is nonconvex, we cannot use a projection step to maintain feasibility. Our main results will address these two issues.

\subsection{Goldstein subdifferential}

Generating a good descent direction for nonsmooth optimization is not trivial. Many nonsmooth optimization algorithms are based on the concept of Goldstein subdifferential \cite{goldstein1977optimization}. Before proceeding to our main result, we briefly review this concept here.

\begin{defn}[Goldstein subdifferential]
Suppose $J$ is locally Lipschitz. Given a point $K\in\mathcal{K}$ and a parameter $\delta>0$, the Goldstein subdifferential of $J$ at $K$ is defined to be the following set 
\begin{align} \label{Gold_sub}
\partial_\delta J(K):=\conv \left\{\cup_{K'\in\mathbb{B}_\delta(K)} \partial_C J(K')\right\},
\end{align}
where $\mathbb{B}_\delta(K)$ denotes the $\delta$-ball around $K$. The above definition implicitly requires $\mathbb{B}_\delta(K)\subset\mathcal{K}$.
\end{defn}
Based on the above definition, one can further define the notion of $(\delta,\epsilon)$-stationarity.
A point $K$ is said to be $(\delta,\epsilon)$-stationary if $\dist(0, \partial_\delta J(K))\le \epsilon$.
 It is well-known that the minimal norm
element of the Goldstein subdifferential generates a good descent direction. This fact is stated as follows.
\begin{prop}[\cite{goldstein1977optimization}]
Let $F$ be the minimal norm element in $\partial_\delta J(K)$. Suppose $K-\alpha F/\norm{F}_2\in \mathcal{K}$ for any $0\le \alpha \le \delta$. 
Then we have 
\begin{align}\label{eq:descent}
J(K-\delta F/\norm{F}_2)\le J(K)-\delta \norm{F}_2.
\end{align}
\end{prop}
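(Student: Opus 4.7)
The plan is to invoke Lebourg's mean value theorem along the segment from $K$ to $K-\delta F/\norm{F}_2$ and combine it with the variational characterization of $F$ as the projection of the origin onto the closed convex set $\partial_\delta J(K)$. The hypothesis $K-\alpha F/\norm{F}_2\in\mathcal{K}$ for every $\alpha\in[0,\delta]$ guarantees that the entire segment lies in the open set where $J$ is locally Lipschitz (by Proposition \ref{prop:regular}), so Clarke's calculus is applicable along it.

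First I would consider the auxiliary scalar function $\phi(\alpha)=J(K-\alpha F/\norm{F}_2)$ for $\alpha\in[0,\delta]$, which is well defined and locally Lipschitz by the feasibility hypothesis. Applying Lebourg's mean value theorem to $J$ between the endpoints $K$ and $K-\delta F/\norm{F}_2$, there exist some $\alpha^\star\in(0,\delta)$ and some $g\in\partial_C J(K-\alpha^\star F/\norm{F}_2)$ with
\begin{align*}
J(K-\delta F/\norm{F}_2)-J(K)=\left\langle g,\,-\delta F/\norm{F}_2\right\rangle=-\frac{\delta}{\norm{F}_2}\langle g,F\rangle.
\end{align*}
Since $K-\alpha^\star F/\norm{F}_2\in\mathbb{B}_\delta(K)$, the definition \eqref{Gold_sub} of the Goldstein subdifferential yields $g\in\partial_\delta J(K)$.

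Next I would exploit that $F$ is the minimal-norm element of the closed convex set $\partial_\delta J(K)$, i.e. the Euclidean projection of the origin onto $\partial_\delta J(K)$. The first-order optimality condition for this projection gives, for every $g\in\partial_\delta J(K)$, the inequality $\langle 0-F,g-F\rangle\le 0$, which rearranges to $\langle g,F\rangle\ge \norm{F}_2^2$. Plugging this into the identity above yields
\begin{align*}
J(K-\delta F/\norm{F}_2)-J(K)\le -\frac{\delta}{\norm{F}_2}\cdot\norm{F}_2^2=-\delta\norm{F}_2,
\end{align*}
which is \eqref{eq:descent}.

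The main obstacle I anticipate is justifying the two analytic tools cleanly in the nonsmooth, constrained setting: namely that Lebourg's mean value theorem indeed applies along the segment (which needs the segment to be contained in the open feasible set $\mathcal{K}$, supplied by the hypothesis) and that the projection-style inequality $\langle g,F\rangle\ge\norm{F}_2^2$ is legitimate, which requires $\partial_\delta J(K)$ to be a closed convex set containing $F$. Closedness is not stated in the definition \eqref{Gold_sub} but follows from local boundedness of $\partial_C J$ on the compact ball $\overline{\mathbb{B}_\delta(K)}\subset\mathcal{K}$ together with taking the convex hull; once these two points are secured, the remainder of the argument is essentially a one-line computation.
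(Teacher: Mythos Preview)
Your argument is correct and is essentially the classical proof of Goldstein's descent lemma: Lebourg's mean value theorem along the segment, followed by the variational inequality for the projection of the origin onto the convex set $\partial_\delta J(K)$. The paper does not supply its own proof of this proposition; it is simply cited from \cite{goldstein1977optimization}, so there is nothing substantive to compare against. Your identification of the two delicate points (feasibility of the segment, and closedness/convexity of $\partial_\delta J(K)$) is appropriate, and both are handled exactly as you indicate.
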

The idea of Goldstein subdifferential has been used in designing algorithms for nonsmooth $\mathcal{H}_\infty$ control \cite{arzelier2011h2,gumussoy2009multiobjective,burke2020gradient,curtis2017bfgs}. We will show that such policy search algorithms  can be guaranteed to find the global minimum of \eqref{eq:hinfopt}. It is worth mentioning that there are other notions of enlarged subdifferential~\cite{apkarian2006nonsmooth} which can lead to good descent directions for nonsmooth $\mathcal{H}_\infty$ synthesis. In this paper, we focus on the notion of Goldstein subdifferential and related policy search algorithms.

\section{Optimization Landscape for $\mathcal{H}_\infty$ State-Feedback Control}
\label{sec:land}

In this section, we investigate the optimization landscape of the $\mathcal{H}_\infty$ state-feedback policy search problem, and show that any Clarke stationary points of \eqref{eq:hinfopt} are also global minimum. We start by showing the coerciveness of the $\mathcal{H}_\infty$ objective function \eqref{eq:hinfcost}.
\begin{lem}\label{lem1}
The $\mathcal{H}_\infty$ objective function $J(K)$ defined by \eqref{eq:hinfcost} is coercive over the set $\mathcal{K}$ in the sense that for any sequence $\{K^l\}_{l=1}^\infty\subset \mathcal{K}$ we have
    $J(K^l) \rightarrow +\infty$,  
if either $\|K^l\|_2 \rightarrow +\infty$, or  $K^l$ converges to an element in the boundary $\partial \mathcal{K}$.
\end{lem}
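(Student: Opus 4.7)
The plan is to handle the two cases separately, using two different characterizations of $J(K)$: the time-domain minimax formulation
\[J(K)^2 = \max_{\norm{\mathbf{w}}\le 1}\sum_{t=0}^\infty \bigl(x_t^\tp Q x_t + u_t^\tp R u_t\bigr)\]
for the blow-up case $\norm{K^l}_2\to\infty$, and the frequency-domain formula \eqref{eq:hinfcost} for the boundary case $K^l\to K^\infty\in\partial\mathcal{K}$. Both arguments invoke Assumption~\ref{assump1} only through $Q,R\succ 0$, and neither uses any form of convexity.

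For Case 1, since $\norm{K}_2$ denotes the Frobenius norm, the elementary inequality $\sigma_{\max}(K)\ge \norm{K}_2/\sqrt{\min(n_x,n_u)}$ shows that $\norm{K^l}_2\to\infty$ forces $\sigma_{\max}(K^l)\to\infty$. I would then test against the one-shot disturbance $w_0=v^l$, $w_t=0$ for $t\ge 1$, where $v^l$ is a unit top right singular vector of $K^l$. This disturbance is admissible for the supremum since $\norm{\mathbf{w}}=1$, and with $x_0=0$ the trajectory yields $u_0=0$, $x_1=v^l$, and $u_1=-K^l v^l$. Dropping all other nonnegative terms from the cost sum,
\[J(K^l)^2 \;\ge\; u_1^\tp R u_1 \;\ge\; \lambda_{\min}(R)\,\|K^l v^l\|^2 \;=\; \lambda_{\min}(R)\,\sigma_{\max}(K^l)^2 \;\to\; +\infty.\]

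For Case 2, $K^\infty\in\partial\mathcal{K}$ means $\rho(A-BK^\infty)=1$, so there exists $\omega_0\in[0,2\pi)$ with $e^{j\omega_0}$ an eigenvalue of $A-BK^\infty$. Setting $M^l:=e^{j\omega_0}I-(A-BK^l)$, the limit matrix $M^\infty$ is singular, and continuity of singular values in the matrix entries yields $\sigma_{\min}(M^l)\to 0$. Using $Q+(K^l)^\tp R K^l\succeq \lambda_{\min}(Q)\,I$ together with the bound $\lambda_{\max}(M^{-*}N M^{-1})\ge \lambda_{\min}(N)/\sigma_{\min}(M)^2$ (immediate from the variational characterization of $\lambda_{\max}$), the bracketed matrix in \eqref{eq:hinfcost} evaluated at $\omega=\omega_0$ has largest eigenvalue at least $\lambda_{\min}(Q)/\sigma_{\min}(M^l)^2$. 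Since $J(K^l)^2$ is a supremum over $\omega$, evaluating at $\omega_0$ forces $J(K^l)^2\to+\infty$.

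I do not expect a serious obstacle: the only nontrivial analytic input is continuity of the singular values of a matrix in its entries, which is standard. The conceptual point in Case 1 is that $J(K)^2$ is a supremum, so any single test disturbance produces a lower bound; aligning it with the top singular direction of $K^l$ exposes the $u^\tp R u = x^\tp K^\tp R K x$ contribution at $t=1$. The conceptual point in Case 2 is that losing invertibility of $e^{j\omega}I-(A-BK)$ at some frequency forces the $\mathcal{H}_\infty$ norm to blow up, regardless of which path $K^l$ takes to the boundary.
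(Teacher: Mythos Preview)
Your proposal is correct and follows essentially the same approach as the paper's proof: for Case~1 you test against a one-shot disturbance $w_0$ aligned with the top right singular vector of $K^l$ (the paper leaves the choice of $w_0$ implicit but uses the same $t=1$ term $w_0^\tp(Q+(K^l)^\tp R K^l)w_0$), and for Case~2 you use the frequency $\omega_0$ at which $e^{j\omega_0}I-(A-BK^\infty)$ is singular together with $Q\succ 0$ to force the bracketed matrix in \eqref{eq:hinfcost} to blow up. Your Case~2 bound $\lambda_{\max}(M^{-*}NM^{-1})\ge \lambda_{\min}(N)/\sigma_{\min}(M)^2$ is a slightly more direct packaging of the same inequality the paper derives via the product $\|(e^{-j\omega_0}I-A+BK^l)^{-1}\|\cdot\|(e^{j\omega_0}I-A+BK^l)^{-1}\|$, but the content is identical.
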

\begin{proof}
We will only provide a proof sketch here. A detailed proof is presented in the supplementary material. Suppose we have a sequence $\{K^l\}$ satisfying $\norm{K^l}_2\rightarrow +\infty$. We can choose $\mathbf{w}=\{w_0,0,0,\cdots\}$ with $\norm{w_0}=1$ and show 
$J(K^l)\ge  w_0^\tp (Q+(K^l)^\tp R K^l) w_0 \ge \lambda_{\min}(R) \norm{K^l w_0}^2$.
Clearly, we have used the positive definiteness of $R$ in the above derivation. Then by carefully choosing $w_0$, we can ensure $J(K^l)\rightarrow +\infty$ as $\norm{K^l}_2\rightarrow +\infty$.  Next, we assume $K^l\rightarrow K\in \partial \mathcal{K}$.
We have $\rho(A-BK)=1$, and hence there exists some $\omega_0$ such that $(e^{j\omega_0}I-A+BK)$ becomes singular.
Then we can use the positive definiteness of $Q$ to show
$J(K^l)\ge \lambda^{1/2}_{\min}(Q)   (\|  (e^{j\omega_0}I-A+BK^l)^{-1}   \|\cdot  \|  (e^{-j\omega_0}I-A+BK^l)^{-1}   \|)^{\frac{1}{2}}$.
Notice $\sigma_{\min} (e^{\pm j\omega_0}I-A+BK^l) \to 0$ as $l \to \infty$, which implies $  \|  (e^{\pm j\omega_0}I-A+BK^l)^{-1}   \| \to +\infty$ as $l \to \infty$. Therefore, we have $J(K^l) \to +\infty$ as $K^l\to K\in \partial \mathcal{K}$.
More details for the proof can be found in the supplementary material.
\end{proof}
We want to emphasize that the positive definiteness of $(Q,R)$ are crucial for proving the coerciveness of the cost function \eqref{eq:hinfcost}.  Built upon Lemma \ref{lem1}, we can obtain the following nice properties of the sublevel sets of \eqref{eq:hinfopt}.

\begin{lem}\label{lem2}
Consider the $\mathcal{H}_\infty$ state-feedback policy search problem \eqref{eq:hinfopt} with the objective function $J(K)$ defined in \eqref{eq:hinfcost}. Under Assumption \ref{assump1}, the sublevel set defined as $\mathcal{K}_\gamma:=\{K\in \mathcal{K}: J(K)\le \gamma\}$ is compact and path-connected for every $\gamma\ge J(K^*)$ where $K^*$ is the global minimum of \eqref{eq:hinfopt}.
\end{lem}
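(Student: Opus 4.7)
I would split the argument into compactness and path-connectedness, treating each separately.

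\emph{Compactness.} Since $\mathcal{K}_\gamma\subset\R^{n_x\times n_u}$ lives in a finite-dimensional Euclidean space, it suffices to prove that it is bounded and closed. Boundedness is immediate from Lemma~\ref{lem1}: any sequence $\{K^l\}\subset\mathcal{K}_\gamma$ with $\norm{K^l}_2\to+\infty$ would force $J(K^l)\to+\infty$, contradicting $J(K^l)\le\gamma$. For closedness, take any convergent sequence $K^l\to K$ with $K^l\in\mathcal{K}_\gamma$. The limit $K$ cannot lie in $\partial\mathcal{K}$, because Lemma~\ref{lem1} would again give $J(K^l)\to+\infty$; hence $K\in\mathcal{K}$. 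Continuity of $J$ on $\mathcal{K}$ (Proposition~\ref{prop:1}) then yields $J(K)=\lim_l J(K^l)\le\gamma$, so $K\in\mathcal{K}_\gamma$.

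\emph{Path-connectedness.} The plan is to realize $\mathcal{K}_\gamma$ as the continuous image of a convex set via the discrete-time bounded real lemma (BRL). Applied to the closed-loop realization $x_{t+1}=(A-BK)x_t+w_t$, $z_t=(Q+K^{\tp}RK)^{1/2}x_t$, the BRL characterizes $\{K:J(K)\le\gamma\}$ as the set of $K$ for which a quadratic-in-$K$ matrix inequality involving a Lyapunov certificate $P\succ 0$ is feasible. Applying the standard change of variables $W=P^{-1}$, $Y=-KW$, together with Schur complement manipulations, converts this inequality into an LMI that is linear in $(W,Y)$. Consequently the lifted set
\begin{align*}
\mathcal{L}_\gamma:=\{(W,Y):\,W\succ 0,\ \mathrm{LMI}(W,Y;\gamma)\preceq 0\}
\end{align*}
is convex (intersection of the positive definite cone with a finite collection of matrix half-spaces), and hence path-connected. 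The recovery map $\Phi(W,Y):=-YW^{-1}$ is continuous on $\mathcal{L}_\gamma$ since $W\succ 0$, and by the BRL we have $\Phi(\mathcal{L}_\gamma)=\mathcal{K}_\gamma$. Non-emptiness of $\mathcal{L}_\gamma$ for $\gamma\ge J(K^*)$ is guaranteed since the optimal $K^*$ admits a Lyapunov certificate. Because continuous images of path-connected sets are path-connected, $\mathcal{K}_\gamma$ is path-connected.

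\emph{Main obstacle.} The compactness step is a straightforward combination of Lemma~\ref{lem1} and Proposition~\ref{prop:1}, so the technical core lies in path-connectedness. The delicate points are (i) verifying that the BRL change of variables genuinely yields a convex LMI in $(W,Y)$ (rather than in $K$), and (ii) the surjectivity $\Phi(\mathcal{L}_\gamma)=\mathcal{K}_\gamma$, i.e. that every $K\in\mathcal{K}_\gamma$ admits a compatible lifted certificate. The boundary case $J(K)=\gamma$, where the LMI is only weakly feasible, requires a limiting argument: approximate $K$ using the strictly feasible sublevel set $\mathcal{K}_{\gamma+\varepsilon}$ and send $\varepsilon\downarrow 0$, using the compactness already established to extract a convergent subsequence of certificates. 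I would invoke a standard reference for the discrete-time BRL with the $W,Y$ change of variables rather than rederive it from scratch.
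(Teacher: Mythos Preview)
Your overall strategy matches the paper's: compactness via coerciveness plus continuity, and path-connectedness by lifting $\mathcal{K}_\gamma$ to a convex set of certificates through the bounded real lemma and a change of variables, then pushing forward along the continuous map $(W,Y)\mapsto -YW^{-1}$. The compactness argument is fine (indeed, your treatment of closedness is slightly more careful than the paper's sketch, since you explicitly rule out limits on $\partial\mathcal{K}$ using Lemma~\ref{lem1}).

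The gap is in your handling of the boundary case $J(K)=\gamma$. Your proposed limiting argument---take certificates $(W_\varepsilon,Y_\varepsilon)$ at level $\gamma+\varepsilon$ and extract a convergent subsequence---does not follow from the compactness of $\mathcal{K}_\gamma$: that set lives in $K$-space, not in certificate space, so it gives you no control over $\|W_\varepsilon\|$, $\|W_\varepsilon^{-1}\|$, or $\|Y_\varepsilon\|$. Without such control, the limit $W$ could be singular, and then $\Phi$ is undefined. The paper avoids this entirely by invoking the \emph{non-strict} bounded real lemma, which gives the exact equivalence $\{J(K)\le\gamma\}\Leftrightarrow\{\exists\,P\succ 0:\ \text{LMI}\preceq 0\}$ directly. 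The non-strict version requires the closed-loop realization $(A-BK,\,I,\,(Q+K^\tp RK)^{1/2})$ to be minimal; this holds here because $B_{\mathrm{cl}}=I$ is trivially controllable and $Q\succ 0$ makes $C_{\mathrm{cl}}$ full rank, hence observable. Once you use the non-strict BRL, surjectivity of $\Phi$ onto $\mathcal{K}_\gamma$ is immediate and no limiting argument is needed. (The paper also notes an alternative route: prove path-connectedness of the \emph{strict} sublevel sets via the strict BRL, then upgrade to the non-strict sets using compactness together with \cite[Theorem~5.2]{martin1982connected}.)
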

\begin{proof}
The compactness of $\mathcal{K}_\gamma$ directly follows from the continuity and coerciveness of $J(K)$, and is actually a consequence of \cite[Proposition 11.12]{bauschke2011convex}. The path-connectedness of the strict sublevel sets for the continuous-time $\mathcal{H}_\infty$ control problem has been proved in \cite{hu2022connectivity}. We can slightly modify the proof in \cite{hu2022connectivity} to show that the strict sublevel set $\{K\in \mathcal{K}: J(K)<\gamma\}$ is path-connected.  Based on the fact that every non-strict sublevel sets are compact, now we can apply \cite[Theorem 5.2]{martin1982connected} to show $\mathcal{K}_\gamma$ is also path-connected.
An independent proof based on the non-strict version of the bounded real lemma is also provided in the supplementary material.
\end{proof}
The path-connectedness of $\mathcal{K}_\gamma$ for every $\gamma$ actually implies the uniqueness of the minimizing set in a certain strong sense \cite[Sections 2\&3]{martin1982connected}. Due to the space limit, we will defer the discussion on the uniqueness of the minimizing set to the supplementary material. Here, we present a stronger result which is one of the main contributions of our paper.
\begin{thm}\label{thm1}
Consider the $\mathcal{H}_\infty$ state-feedback policy search problem \eqref{eq:hinfopt}. Under Assumption \ref{assump1}, any Clarke stationary point of $J(K)$ is a global minimum.
\end{thm}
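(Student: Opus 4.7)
The plan is to argue by contradiction via the classical LMI (bounded real lemma) reformulation of $\mathcal{H}_\infty$ state-feedback synthesis. Assume $K^\dag \in \mathcal{K}$ is a Clarke stationary point with $J(K^\dag) > J(K^*)$; I aim to exhibit an explicit direction $D$ at $K^\dag$ along which the directional derivative $J'(K^\dag, D)$ is strictly negative, which contradicts Corollary~\ref{cor1}. To this end, fix $\gamma_1 := J(K^\dag)$ and any $\gamma_0 \in [J(K^*), \gamma_1)$.

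First I would invoke the (non-strict) discrete-time bounded real lemma, available under Assumption~\ref{assump1}, to produce two LMI certificates: matrices $X_1 \succ 0$, $L_1$ with $K^\dag = L_1 X_1^{-1}$ certifying $J \le \gamma_1$, and $X_0 \succ 0$, $L_0$ with $L_0 X_0^{-1} \in \mathcal{K}$ certifying $J \le \gamma_0$ (for instance by choosing any $K$ in the nonempty set $\mathcal{K}_{\gamma_0}$). Because the defining LMI is jointly linear---hence convex---in $(X,L,\gamma)$, the convex combination $(X_t, L_t, \gamma_t) := (1-t)(X_0, L_0, \gamma_0) + t(X_1, L_1, \gamma_1)$ remains feasible with $X_t \succ 0$. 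Setting $K_t := L_t X_t^{-1}$ produces a smooth curve in $\mathcal{K}$ that reaches $K^\dag$ at $t=1$ and satisfies the \emph{linear} bound $J(K_t) \le \gamma_t = \gamma_1 - (1-t)(\gamma_1 - \gamma_0)$. This is exactly the homotopy underlying Lemma~\ref{lem2}; the key point for me is to retain the linear rate of decrease, not just the endpoint inequality.

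Next I would perform the first-order expansion of $K_t$ at $t=1$. Writing $s := 1-t$ and using $X_t^{-1} = X_1^{-1} - sX_1^{-1}(X_0-X_1)X_1^{-1} + O(s^2)$, a short matrix manipulation gives $K_t = K^\dag + sD + O(s^2)$ where $D := \bigl((L_0-L_1) - K^\dag(X_0-X_1)\bigr)X_1^{-1}$. The local Lipschitz property of $J$ from Proposition~\ref{prop:regular} then delivers $J(K^\dag + sD) \le J(K_t) + O(s^2) \le \gamma_1 - s(\gamma_1 - \gamma_0) + O(s^2)$. Dividing by $s$ and letting $s \to 0^+$, and using subdifferential regularity so that the ordinary directional derivative $J'(K^\dag, D)$ exists as a bona fide limit, one obtains $J'(K^\dag, D) \le -(\gamma_1 - \gamma_0) < 0$, contradicting Corollary~\ref{cor1}.

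The main technical obstacle I foresee is the clean invocation of the bounded real lemma at the \emph{boundary} level $J(K^\dag) = \gamma_1$, since the most familiar formulation is the strict-inequality version. One route is to cite the non-strict version directly (legitimate under the stabilizability of $(A,B)$ together with $Q \succ 0$, $R \succ 0$); a safer alternative is to apply the strict version at level $\gamma_1 + \eta$, construct the interpolate uniformly in $\eta$, and pass to the limit $\eta \downarrow 0$ while controlling the $O(s^2)$ remainders. All remaining steps---convexity of the LMI, positive-definiteness of $X_t$ along the homotopy, the matrix perturbation expansion for $K_t$, and the Lipschitz passage from a pointwise upper bound on $J(K_t)$ to an upper bound on the directional derivative---are routine.
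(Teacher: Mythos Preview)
Your proposal is correct and follows essentially the same approach as the paper: both arguments use the non-strict bounded real lemma to lift $K^\dag$ and a better point into the convex LMI parameterization $(Y,L,\gamma)$ (your $(X,L,\gamma)$), interpolate linearly there, push back to the $K$-space via $K_t=L_tX_t^{-1}$, and read off a first-order descent direction $D=\bigl((L_0-L_1)-K^\dag(X_0-X_1)\bigr)X_1^{-1}$ (identical to the paper's $d=\Delta L\,Y^{-1}-LY^{-1}\Delta Y\,Y^{-1}$) whose directional derivative is bounded above by $-(\gamma_1-\gamma_0)<0$, contradicting Corollary~\ref{cor1}. Your identification of the strict-versus-non-strict bounded real lemma as the main technical point, and the fallback of passing through level $\gamma_1+\eta$, also matches the paper's treatment.
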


A detailed proof is presented in the supplementary material.  Here we provide a proof sketch.  Since $Q$ and $R$ are positive definite, the non-strict version of the bounded real lemma\footnote{The difference between the strict and non-strict versions of the bounded real lemma is quite subtle \cite[Section 2.7.3]{befb94}. For completeness, we will provide more explanations for the non-strict version of the bounded real lemma in the supplementary material.} states that $J(K)\le \gamma$ if and only if there exists a positive definite matrix $P$ such that the following matrix inequality holds
\begin{align}\label{eq:lmi1}
\bmat{(A-BK)^\tp P (A-BK) - P & (A-BK)^\tp P \\ P(A-BK) & P}+\bmat{Q+K^\tp R K & 0 \\ 0 & -\gamma^2 I}\preceq 0.
\end{align}
The above matrix inequality is linear in $P$ but not linear in $K$. A standard trick from the control theory can be combined with the Schur complement lemma to convert the above matrix inequality condition to another condition which is linear in all the decision variables \cite{befb94}. Specifically, there exists a matrix function $\lmi(Y,L,\gamma)$ which is linear in $(Y,L,\gamma)$ such that $\lmi(Y,L,\gamma)\preceq 0$ and $Y\succ 0$ if and only if \eqref{eq:lmi1} is feasible with $K=L Y^{-1}$ and $P=\gamma Y^{-1}\succ 0$. The matrix function $\lmi(Y,L,\gamma)$ involves a larger matrix. Hence we present the analytical formula of $\lmi(Y,L,\gamma)$ in the supplementary material and skip it here. Since $\lmi(Y,L,\gamma)$ is linear in $(Y,L,\gamma)$, we know $\lmi(Y,L,\gamma)\preceq 0$ is just a convex semidefinite programming condition. Based on this convex necessary and sufficient condition for $J(K)\le \gamma$, we can prove the following important lemma. 
\begin{lem}\label{lem3}
For any $K\in\mathcal{K}$ satisfying $J(K)>J^*$, there exists a matrix direction $d\neq 0$ such that $J'(K,d)\le J^*-J(K)<0$, where $J^*=J(K^*)$ and $K^*$ is the global minimum of \eqref{eq:hinfopt}.
\end{lem}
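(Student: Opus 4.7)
The plan is to exploit the convex LMI reformulation described just before the lemma. Given any $K \in \mathcal{K}$ with $J(K) > J^{*}$, I first set $\gamma = J(K)$ and invoke the non-strict version of the bounded real lemma to produce a positive definite $P$ satisfying \eqref{eq:lmi1}. Passing through the standard change of variables $Y := \gamma P^{-1} \succ 0$ and $L := K Y$ yields a tuple $(Y, L, \gamma)$ with $\lmi(Y, L, \gamma) \preceq 0$. I apply the same procedure at the global minimizer $K^{*}$ with $\gamma^{*} = J^{*}$ to obtain a second feasible tuple $(Y^{*}, L^{*}, J^{*})$.

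Next I form the line segment $(Y_t, L_t, \gamma_t) := (1-t)(Y, L, \gamma) + t(Y^{*}, L^{*}, J^{*})$ for $t \in [0, 1]$. Since $\lmi$ is linear in its arguments and the semidefinite cone is convex, $\lmi(Y_t, L_t, \gamma_t) \preceq 0$ and $Y_t \succ 0$ along the whole segment, so $K_t := L_t Y_t^{-1}$ is well defined and $P_t := \gamma_t Y_t^{-1} \succ 0$ satisfies the matrix inequality \eqref{eq:lmi1} at $(K_t, \gamma_t)$. The $(1,1)$ block of \eqref{eq:lmi1}, combined with $Q, R \succ 0$, yields the strict Lyapunov inequality $(A - B K_t)^{\tp} P_t (A - B K_t) - P_t \prec 0$, so $K_t \in \mathcal{K}$. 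The non-strict bounded real lemma then gives $J(K_t) \le \gamma_t = J(K) + t\bigl(J^{*} - J(K)\bigr)$ for every $t \in [0, 1]$.

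The candidate direction is $d := \tfrac{d}{dt}\big|_{t = 0} K_t = (L^{*} - L) Y^{-1} - K (Y^{*} - Y) Y^{-1}$. I claim $d \ne 0$: if $d = 0$ then $L^{*} - L = K(Y^{*} - Y)$, which combined with $L = K Y$ forces $L^{*} = K Y^{*}$, hence $K^{*} = L^{*}(Y^{*})^{-1} = K$, contradicting $J(K) > J^{*} = J(K^{*})$. Because $t \mapsto K_t$ is smooth near $0$ with $K_0 = K$, a Taylor expansion gives $K_t = K + t d + O(t^{2})$. By Proposition \ref{prop:regular}, $J$ is locally Lipschitz, so $J(K + t d) \le J(K_t) + O(t^{2}) \le J(K) + t\bigl(J^{*} - J(K)\bigr) + O(t^{2})$ for all sufficiently small $t > 0$. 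Dividing by $t$ and letting $t \searrow 0$ delivers $J'(K, d) \le J^{*} - J(K) < 0$; the directional derivative exists because subdifferential regularity (Proposition \ref{prop:regular}) ensures that the limit along any direction is well defined and agrees with the Clarke directional derivative.

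The main obstacle is the use of the \emph{non-strict} bounded real lemma at the exact boundary values $\gamma = J(K)$ and $\gamma^{*} = J^{*}$ (rather than at strictly feasible $\gamma$'s), together with the verification that the resulting $P$ is strictly positive definite so that the change of variables $Y = \gamma P^{-1}$ is admissible and $Y_t$ stays in the positive-definite cone along the whole segment. Once this non-strict bounded real lemma is available (the authors defer its precise statement and proof to the supplementary material), the rest of the argument reduces to the convex-combination computation and the Lipschitz bound outlined above.
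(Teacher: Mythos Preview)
Your proof is correct and follows essentially the same route as the paper: the convex LMI reparameterization via the non-strict bounded real lemma, the line-segment interpolation in $(Y,L,\gamma)$, and the Taylor expansion combined with local Lipschitz continuity of $J$ are identical. The only minor difference is your argument for $d\neq 0$---you deduce $K=K^*$ algebraically from $d=0$, whereas the paper argues that $d=0$ would yield the contradiction $J'(K,0)<0$; both are valid, and your explicit verification that $K_t\in\mathcal{K}$ via the $(1,1)$ Lyapunov block is a detail the paper leaves implicit.
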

\begin{proof}
Suppose we have $K=LY^{-1}$ where $(Y,L, J(K))$ is a feasible point for the convex regime $\lmi(Y,L,J(K))\preceq 0$.
In addition, we have $K^*=L^* (Y^*)^{-1}$ where $(Y^*,L^*, J(K^*))$ is a point satisfying $\lmi(Y^*,L^*,J(K^*))\preceq 0$.
 Since the LMI condition is convex, the line segment between $(Y,L,J(K))$ and $(Y^*,Q^*,J(K^*))$ is also in this convex set. For any $t>0$, we know $(Y+t \Delta Y,L+t \Delta L, J(K)+t(J(K^*)-J(K)))$ also satisfies 
$\lmi(Y+t\Delta Y, L+t\Delta L, J(K)+t(J(K^*)-J(K)))\preceq 0$, 
where $\Delta L=L^*-L$, and $\Delta Y=Y^*-Y$. 
Therefore, based on the bounded real lemma, we know $J((L+t\Delta L) (Y+t \Delta Y)^{-1})\le J(K)+t(J(K^*)-J(K))$.
Let's choose $d=\Delta L Y^{-1} - LY^{-1}\Delta Y Y^{-1}$. Then we have
\begin{align*}
J'(K,d)\le \lim_{t\searrow 0} \left( \frac{J((L+t\Delta L) (Y+t \Delta Y)^{-1})-J(K)}{t}+o(t)\right)\le J^*-J(K)<0.
\end{align*}
A detailed verification of the above inequality is provided in the supplementary material.  Notice $d\neq 0$. If $\Delta L Y^{-1} -LY^{-1}\Delta Y Y^{-1}=0$, the above argument still works and we reach to the conclusion  $J'(K,0)<0$. But this is impossible since we always have  $J'(K,0)=0$.  Hence we have $d\neq 0$. This completes the proof for this lemma.
\end{proof}

Now we are ready to provide the proof for Theorem \ref{thm1}. Based on Lemma \ref{lem3} and the fact that $J(\cdot)$ is subdifferentially regular, the proof  can be done by contradiction.  Suppose $K^*$ is the global minimum, and $K^\dag$ is a Clarke stationary point. If $K^\dag$ is not a global minimum. Then by Lemma \ref{lem3}, there exists $d \neq 0$ such that $J'(K^\dag,d) < 0$, this contradicts the fact that $J'(K^\dag,d) \ge 0$ for all $d$ by Corollary \ref{cor1}. Therefore, $K^\dag$ has to be the global minimum of \eqref{eq:hinfopt}.

The above proof relies on Lemma \ref{lem3} and the fact that $J$ is subdifferentially regular.
Without using the subdifferentially regular property, Lemma \ref{lem3} itself is not sufficient for proving Theorem \ref{thm1}.
It is also worth mentioning that Lemma \ref{lem3} can be viewed as a modification of the convex parameterization/lifting results in \cite{sun2021learning,umenberger2022globally} for non-differentiable points.

\section{Global Convergence of Direct Policy Search on $\mathcal{H}_\infty$ State-Feedback Control}
\label{sec:main1}

In this section, we first show that Goldstein's subgradient method \cite{goldstein1977optimization} can be guaranteed to stay in the nonconvex feasible regime $\mathcal{K}$ during the optimization process and eventually converge to the global minimum of \eqref{eq:hinfopt}. 
The complexity of finding $(\delta,\epsilon)$-stationary points of \eqref{eq:hinfopt} is also presented.
Then we further discuss the convergence guarantees for various implementable forms of Goldstein's subgradient method.

\subsection{Global convergence and complexity of Goldstein's subgradient Method}
We will investigate the global convergence of Goldstein's subgradient method for direct policy search of the optimal $\mathcal{H}_\infty$ state-feedback policy. Goldstein's subgradient method iterates as follows
\begin{align}\label{eq:gold1}
K^{n+1}=K^n-\delta^n F^n/\norm{F^n}_2,
\end{align}
where $F^n$ is the minimum norm element of the Goldstein subdifferential $\partial_{\delta^n} J(K^n)$.
We assume that an initial stabilizing policy is available, i.e. $K^0\in\mathcal{K}$. The same initial policy assumption has also been made in the global convergence theory for direct policy search on LQR \cite{pmlr-v80-fazel18a}. More recently, some provable guarantees have been obtained for finding such stabilizing policies via direct policy search~\cite{perdomo2021stabilizing,ozaslan2022computing}. Hence such an assumption on the initial policy $K^0$ is reasonable.  
Our global convergence result relies on the fact that there is a strict separation between any sublevel set of \eqref{eq:hinfopt} and the boundary of $\mathcal{K}$. This fact is formalized as follows.
\begin{lem}\label{lem4}
Consider the $\mathcal{H}_\infty$ state-feedback policy search problem \eqref{eq:hinfopt} with the cost function $J(K)$ defined in \eqref{eq:hinfcost}.  Denote the complement of the feasible set $\mathcal{K}$ as $\mathcal{K}^c$. 
Suppose Assumption \ref{assump1} holds and $\gamma\ge J^*$. Then there is a strict separation between the sublevel set $\mathcal{K}_\gamma$ and $\mathcal{K}^c$. In other words, we have $dist(\mathcal{K}_\gamma, \mathcal{K}^c)>0$.
\end{lem}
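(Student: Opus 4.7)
The plan is to derive Lemma~\ref{lem4} as a purely topological consequence of results already established in the paper, specifically the compactness of sublevel sets (Lemma~\ref{lem2}) and the openness of $\mathcal{K}$ (Proposition~\ref{prop:1}). The underlying fact is the standard one that a compact set and a closed set that are disjoint must have strictly positive distance in $\R^{n_x\times n_u}$.

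First I would set up the three ingredients. Since $\mathcal{K}$ is open by Proposition~\ref{prop:1}, its complement $\mathcal{K}^c$ is closed. Since $\gamma\ge J^*$, Lemma~\ref{lem2} tells us $\mathcal{K}_\gamma$ is compact. Moreover, by definition $\mathcal{K}_\gamma\subset \mathcal{K}$, so $\mathcal{K}_\gamma\cap \mathcal{K}^c=\emptyset$.

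Next I would argue by contradiction. Suppose $\dist(\mathcal{K}_\gamma,\mathcal{K}^c)=0$. Then there exist sequences $\{K^l\}\subset \mathcal{K}_\gamma$ and $\{\tilde K^l\}\subset \mathcal{K}^c$ with $\|K^l-\tilde K^l\|_2\to 0$. By compactness of $\mathcal{K}_\gamma$ we may pass to a subsequence so that $K^l\to K^\infty\in \mathcal{K}_\gamma\subset \mathcal{K}$. But then $\tilde K^l\to K^\infty$ as well, and closedness of $\mathcal{K}^c$ forces $K^\infty\in\mathcal{K}^c$, contradicting $K^\infty\in\mathcal{K}$. Therefore $\dist(\mathcal{K}_\gamma,\mathcal{K}^c)>0$.

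There is essentially no technical obstacle here beyond verifying that the sublevel set is indeed compact and that $\mathcal{K}^c$ is closed; both have already been established earlier in the paper (Lemma~\ref{lem2} and Proposition~\ref{prop:1}, with the former resting on the coerciveness result Lemma~\ref{lem1}). The content of Lemma~\ref{lem4} is really a clean corollary of coerciveness: because $J(K)$ blows up on approach to $\partial\mathcal{K}$, any sublevel set must be uniformly bounded away from the boundary. One could alternatively give a direct argument by invoking Lemma~\ref{lem1}: if a sequence in $\mathcal{K}_\gamma$ approached $\partial\mathcal{K}$, then $J$ would tend to $+\infty$ along that sequence, violating the uniform bound $J(K)\le \gamma$. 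Either route works, but the compactness-based argument is the most economical and is what I would present.
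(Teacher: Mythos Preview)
Your proposal is correct and follows essentially the same approach as the paper: both rely on the disjointness $\mathcal{K}_\gamma\cap\mathcal{K}^c=\emptyset$, the compactness of $\mathcal{K}_\gamma$ from Lemma~\ref{lem2}, and the closedness of $\mathcal{K}^c$ from Proposition~\ref{prop:1}, then invoke the standard fact that a compact set and a disjoint closed set have strictly positive distance. The only difference is that the paper states this last fact directly, whereas you spell out the sequential contradiction argument.
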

\begin{proof}
Obviously, the set $\mathcal{K}_\gamma \cap \mathcal{K}^c$ is empty (since we know $\mathcal{K}_\gamma\subset \mathcal{K}$). Based on Lemma \ref{lem2}, we know $\mathcal{K}_\gamma$ is compact. Since $\mathcal{K}$ is open, we know $\mathcal{K}^c$ is closed. Therefore, there is a strict separation between $\mathcal{K}_\gamma$  and $\mathcal{K}^c$, and we have $dist(\mathcal{K}_\gamma, \mathcal{K}^c)>0$.
\end{proof}

Now we are ready to present our main convergence result. 
\begin{thm} \label{thm2}
Consider the $\mathcal{H}_\infty$ state-feedback policy search problem \eqref{eq:hinfopt} with the cost function $J(K)$ defined in \eqref{eq:hinfcost}. Suppose Assumption \ref{assump1} holds, and an initial stabilizing policy is given, i.e. $K^0\in\mathcal{K}$. Denote $\Delta_0:=\dist(\mathcal{K}_{J(K^0)}, \mathcal{K}^c)>0$. Choose $\delta^n=\frac{c\Delta_0}{n+1}$ for all $n$ with $c$ being a fixed number in $(0,1)$.
Then Goldstein's subgradient method \eqref{eq:gold1} is guaranteed to stay in $\mathcal{K}$ for all $n$. In addition,  we have $J(K^n)\rightarrow J^*$ as $n\rightarrow \infty$.
\end{thm}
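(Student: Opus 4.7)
The plan is to first show by induction that the iterates never leave $\mathcal{K}$, then exploit the descent inequality~\eqref{eq:descent} together with compactness of the sublevel set (Lemma~\ref{lem2}) and the landscape result of Theorem~\ref{thm1} to force $J(K^n)\to J^*$.

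For feasibility, I would argue inductively that $K^n\in \mathcal{K}_{J(K^0)}$. The base case $K^0\in\mathcal{K}_{J(K^0)}$ is trivial. If $K^n\in \mathcal{K}_{J(K^0)}$, then Lemma~\ref{lem4} gives $\dist(K^n,\mathcal{K}^c)\ge \Delta_0$, and since $\delta^n = c\Delta_0/(n+1) < \Delta_0$, the entire ball $\mathbb{B}_{\delta^n}(K^n)$ lies inside $\mathcal{K}$. This validates the Goldstein subdifferential $\partial_{\delta^n}J(K^n)$ and, in particular, guarantees that the segment $K^n-\alpha F^n/\|F^n\|_2$ stays in $\mathcal{K}$ for all $\alpha\in[0,\delta^n]$, so the descent bound~\eqref{eq:descent} applies and yields $J(K^{n+1})\le J(K^n)-\delta^n\|F^n\|_2\le J(K^n)$. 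Hence $K^{n+1}\in\mathcal{K}_{J(K^0)}$ and the induction closes.

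For convergence, summing the descent bound telescopes to $\sum_{n=0}^{\infty}\delta^n\|F^n\|_2\le J(K^0)-J^*<\infty$. Since $\delta^n = c\Delta_0/(n+1)$ is non-summable, this forces $\liminf_n \|F^n\|_2=0$, so there is a subsequence $\{n_k\}$ with $\|F^{n_k}\|_2\to 0$ and $\delta^{n_k}\to 0$. Because $\{K^{n_k}\}\subset \mathcal{K}_{J(K^0)}$ is contained in a compact set by Lemma~\ref{lem2}, I may pass to a further subsequence with $K^{n_k}\to \bar K\in \mathcal{K}_{J(K^0)}\subset\mathcal{K}$. I would then argue $0\in\partial_C J(\bar K)$ by combining three ingredients: Carath\'eodory's theorem expresses each $F^{n_k}$ as a convex combination of at most $n_xn_u+1$ Clarke subgradients at points inside $\mathbb{B}_{\delta^{n_k}}(K^{n_k})$; as $\delta^{n_k}\to 0$, these base points all collapse to $\bar K$; upper semicontinuity of $\partial_C J$ (valid since $J$ is locally Lipschitz by Proposition~\ref{prop:regular}) together with the convexity of $\partial_C J(\bar K)$ then places the limit $0$ in $\partial_C J(\bar K)$. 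Theorem~\ref{thm1} upgrades this to $J(\bar K)=J^*$. Since $J(K^n)$ is monotonically decreasing and bounded below, it converges; since $J(K^{n_k})\to J(\bar K)=J^*$ by continuity, the limit of the entire sequence $J(K^n)$ must be $J^*$.

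The main obstacle I anticipate is the clean passage to the Clarke subdifferential at $\bar K$. The Goldstein subdifferential is a convex hull over a shrinking ball, and extracting weighted-point decompositions that converge jointly requires Carath\'eodory combined with diagonal subsequence extraction; while conceptually routine in finite dimensions, one must ensure that the weights and the supporting points converge simultaneously so upper semicontinuity of $\partial_C J$ can be invoked. A secondary subtlety is that non-summability of $\delta^n$ only yields $\liminf_n \|F^n\|_2 = 0$ rather than $\|F^n\|_2\to 0$, so convergence of the full sequence $J(K^n)$ must be recovered via an accumulation-point argument that leverages monotonicity, rather than being read off directly from the descent sum.
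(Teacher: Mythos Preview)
Your proposal is correct and follows essentially the same route as the paper: induction on $K^n\in\mathcal{K}_{J(K^0)}$ using $\delta^n<\Delta_0$, telescoping the descent inequality to get $\sum \delta^n\|F^n\|_2<\infty$, non-summability of $\delta^n$ to force $\liminf\|F^n\|_2=0$, compactness to extract a limit $\bar K$, Clarke stationarity of $\bar K$, Theorem~\ref{thm1} to upgrade to global optimality, and monotonicity to pass from the subsequence to the full sequence. The only stylistic difference is in how Clarke stationarity of $\bar K$ is obtained: the paper argues that eventually $F^{n_k}\in\partial_\delta J(\bar K)$ for every fixed $\delta>0$ and then invokes $\bigcap_{\delta>0}\partial_\delta J(\bar K)=\partial_C J(\bar K)$ (whose proof, given in their Remark, itself rests on a Carath\'eodory decomposition), whereas you go straight to Carath\'eodory plus upper semicontinuity of $\partial_C J$; these are two packagings of the same finite-dimensional argument.
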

\begin{proof}
We have $\delta^n\le c\Delta_0< \Delta_0$ for all $n$. Now we use an induction proof to show $K^n\in \mathcal{K}_{J(K^0)}$ for all $n$. For $n= 0$, we know $K^0-c\Delta_0 F^0/\norm{F^0}_2$ has to be within the $\Delta_0$ ball around $K^0$ since we know the norm of $F^0/\norm{F^0}_2$ is exactly equal to $1$. Since $\Delta_0:=\dist(\mathcal{K}_{J(K^0)}, \mathcal{K}^c)>0$, we know $K^0-\delta^0 F^0/\norm{F^0}_2\in \mathcal{K}$. As a matter of fact, we know $\mathbb{B}_{\delta^0}(K^0)$ has to be a subset of $\mathcal{K}$. Hence we can apply \eqref{eq:descent} to show that $K^1$ exists and is also in $\mathcal{K}_{J(K^0)}$. Similarly, we can repeat this argument to show $K^n\in \mathcal{K}_{J(K^0)}$ for all $n$. 
Next, we can apply \eqref{eq:descent} to every step and then sum the inequalities over all $n$. Then the following inequality holds for all $N$:
\begin{align}\label{eq:iterative}
\sum_{n=0}^N \delta_n \norm{F^n}_2 \le J(K^0)-J^*
\end{align}
Since we have $\sum_{n=0}^\infty \delta^n=+\infty$, we know $\liminf_{n\rightarrow\infty} \norm{F^n}_2= 0$. There exists one subsequence $\{i_n\}$ such that $\norm{F^{i_n}}_2\rightarrow 0$. For this subsequence, the resultant policy sequence $\{K^{i_n}\}$ is also bounded (notice that the policy parameter sequence stays in the compact set $\mathcal{K}_{J(K^0)}$ for all $n$) and has a convergent subsequence. We can show that the limit of this subsequence is a Clarke stationary point.
Hence the function value associated with this subsequence converges to $J^*$. Notice that $J(K^n)$ is monotonically decreasing for the entire sequence $\{n\}$. Hence we have $J(K^n)\rightarrow J^*$. 
\end{proof}

We have tried to be brief in giving the above proof. We will present a more detailed proof in the supplementary material. We believe that 
this is the first result showing that direct policy search can be guaranteed to converge to the global optimal solution of the $\mathcal{H}_\infty$ state-feedback control problem. The above result only provides an asymptotic convergence guarantee to ensure $J(K^n)\rightarrow J^*$. One can use a similar argument to establish a finite-time complexity bound for finding the $(\delta,\epsilon)$-stationary points of \eqref{eq:hinfopt}. Such a result is given as follows.

\begin{thm} \label{thm3}
Consider the $\mathcal{H}_\infty$ problem \eqref{eq:hinfopt} with the cost function \eqref{eq:hinfcost}. Suppose Assumption \ref{assump1} holds, and $K^0\in\mathcal{K}$. Denote $\Delta_0:=\dist(\mathcal{K}_{J(K^0)}, \mathcal{K}^c)>0$. For any $\delta<\Delta_0$, we can 
choose $\delta^n=\delta$ for all $n$ to ensure that
Goldstein's subgradient method \eqref{eq:gold1} stays in $\mathcal{K}$ and satisfies the following finite-time complexity bound:
\begin{align}
    \min_{n:0\le n\le N} \norm{F^n}_2\le \frac{J(K^0)-J^*}{(N+1)\delta}
\end{align}
In other words, we have $\min_{0\le n\le N} \norm{F^n}_2\le \epsilon$ after $N=\mathcal{O}\left(\frac{\Delta}{\delta\epsilon}\right)$ where $\Delta:=J(K^0)-J^*$. For any $\delta<\Delta_0$ and $\epsilon>0$, the complexity of finding a $(\delta,\epsilon)$-stationary point is $\mathcal{O}\left(\frac{\Delta}{\delta\epsilon}\right)$.
\end{thm}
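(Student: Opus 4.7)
The plan is to adapt the argument for Theorem \ref{thm2} but with a constant step size $\delta^n\equiv\delta<\Delta_0$, which simplifies the bookkeeping and yields a clean finite-time rate. The core ingredients are already available: Lemma \ref{lem4} guarantees the strict separation $\Delta_0:=\dist(\mathcal{K}_{J(K^0)},\mathcal{K}^c)>0$, and the Goldstein descent inequality \eqref{eq:descent} gives the one-step decrease whenever the ball $\mathbb{B}_\delta(K^n)$ sits inside $\mathcal{K}$.

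First, I would establish feasibility by induction on $n$, showing $K^n\in\mathcal{K}_{J(K^0)}$ for every $n$. For the base case, $K^0\in\mathcal{K}_{J(K^0)}$ trivially. For the inductive step, assuming $K^n\in\mathcal{K}_{J(K^0)}$, the separation $\dist(K^n,\mathcal{K}^c)\ge\Delta_0>\delta$ implies $\mathbb{B}_\delta(K^n)\subset\mathcal{K}$, so the Goldstein subdifferential $\partial_\delta J(K^n)$ is well defined, and any point $K^n-\alpha F^n/\|F^n\|_2$ with $0\le\alpha\le\delta$ lies in $\mathcal{K}$. Applying \eqref{eq:descent} then yields
\begin{equation*}
J(K^{n+1})\le J(K^n)-\delta\|F^n\|_2\le J(K^n)\le J(K^0),
\end{equation*}
so $K^{n+1}\in\mathcal{K}_{J(K^0)}$, closing the induction.

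Next, I would telescope the one-step descent inequality from $n=0$ to $n=N$:
\begin{equation*}
\delta\sum_{n=0}^{N}\|F^n\|_2\le J(K^0)-J(K^{N+1})\le J(K^0)-J^*=\Delta.
\end{equation*}
Bounding the minimum term by the average immediately gives
\begin{equation*}
\min_{0\le n\le N}\|F^n\|_2\le\frac{\Delta}{(N+1)\delta},
\end{equation*}
which is the stated rate. Setting the right-hand side equal to $\epsilon$ yields $N+1\ge\Delta/(\delta\epsilon)$, so $N=\mathcal{O}(\Delta/(\delta\epsilon))$. Since $F^n$ is the minimum-norm element of $\partial_\delta J(K^n)$, the condition $\|F^n\|_2\le\epsilon$ is exactly $\dist(0,\partial_\delta J(K^n))\le\epsilon$, i.e., $(\delta,\epsilon)$-stationarity.

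The argument is essentially routine once the feasibility induction is handled, so I do not expect a serious obstacle; the only subtlety is ensuring the strict inequality $\delta<\Delta_0$ in the hypothesis is used correctly so that each Goldstein subgradient step both keeps the entire ball $\mathbb{B}_\delta(K^n)$ inside $\mathcal{K}$ (needed for $\partial_\delta J(K^n)$ to be defined) and keeps the iterate $K^{n+1}$ inside $\mathcal{K}_{J(K^0)}$ (needed to re-invoke Lemma \ref{lem4} at the next step). This is the one place where the constant-step choice is crucial: with a shrinking step size as in Theorem \ref{thm2}, one trades off the finite-time rate for asymptotic optimality, while here the fixed $\delta$ directly delivers the $1/N$ decay.
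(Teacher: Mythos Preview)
Your proposal is correct and follows essentially the same approach as the paper: the paper's proof simply states that the induction argument from Theorem~\ref{thm2} shows $K^n\in\mathcal{K}_{J(K^0)}$ for all $n$, that \eqref{eq:iterative} holds with $\delta^n=\delta$, and that the conclusion then follows directly. You have filled in precisely these details (the feasibility induction, the telescoping of \eqref{eq:descent}, and the min-by-average bound), so there is nothing to add.
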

\begin{proof}
The above result can be proved using a similar argument from Theorem \ref{thm2}. 
We can use the same induction argument to show $K^n\in\mathcal{K}_{J(K^0)}$ for all $n$, and \eqref{eq:iterative} holds with $\delta^n=\delta$. Then the desired conclusion directly follows.
\end{proof}
The complexity for nonsmooth optimization of Lipschitz functions is quite subtle. While the above result gives a reasonable characterization of the finite-time performance of Goldstein's subgradient method on the $\mathcal{H}_\infty$ state-feedback control problem, it does not quantify how fast $J(K^n)$ converges to $J^*$. Recall that $(\delta,\epsilon)$-stationarity means  $\dist(0, \partial_\delta J(K))\le \epsilon$, while $\epsilon$-stationarity means $\dist(0, \partial_C J(K))\le \epsilon$.
As commented in \cite{shamir2020can,pmlr-v119-zhang20p,davis2021gradient},    $(\delta,\epsilon)$-stationarity does not imply being $\delta$-close to an
$\epsilon$-stationary point of $J$. Importantly, the function value of a $(\delta,\epsilon)$-stationary point can be far from $J^*$ even for small $\delta$ and $\epsilon$. Theorem 5 in \cite{pmlr-v119-zhang20p} shows that
there is no finite time algorithm that can find $\epsilon$-stationary points provably for all Lipschitz functions. It is still possible that one can develop some finite time  bounds for $(J(K^n)-J^*)$ via exploiting other advanced properties of the $\mathcal{H}_\infty$ cost function \eqref{eq:hinfcost}. This is an important future task.

\subsection{Implementable variants and related convergence results}
\label{sec:imple}

In practice, it can be difficult to evaluate the minimum norm element of the Goldstein subdifferential.
  Now we discuss implementable variants of Goldstein's subgradient method and  related guarantees. 

\textbf{Gradient sampling  \cite{burke2020gradient,burke2005robust,kiwiel2007convergence}.} 
The gradient sampling (GS) method 
is the main optimization algorithm used in  the robust control package HIFOO \cite{arzelier2011h2,gumussoy2009multiobjective}.
Suppose we can access a first-order oracle which can evaluate $\nabla J$ for any differentiable points in the feasible set\footnote{When $(A,B)$ is known, one can calculate the $\mathcal{H}_\infty$ gradient at differential points using the chain rule in \cite{apkarian2006nonsmooth}. More explanations can be found in the supplementary material.}. Based on Rademacher's theorem, a locally Lipschitz function is differentiable almost everywhere. Therefore, for any $K^n\in\mathcal{K}$, we can randomly sample policy parameters over $\mathbb{B}_{\delta^n}(K^n)$ and obtain differentiable points with probability one. For all these sampled differentiable points, the Clarke subdifferential at each point is just the gradient. Then the convex hull of these sampled gradients can be used as an approximation for the Goldstein subdifferential $\partial_{\delta^n} K^n$. The minimum norm element from the convex hull of the sampled gradients can be solved via a simple convex quadratic program, and is sufficient for generating a reasonably good descent direction for updating $K^{n+1}$ as long as we sample at least $(n_x n_u+1)$ differentiable points for each $n$ \cite{burke2020gradient}.  In the unconstrained setup, the cluster points of the GS algorithm can be guaranteed to be Clarke stationary \cite{kiwiel2007convergence,burke2020gradient}. Such a result can be combined with Theorem \ref{thm1} and Lemma \ref{lem4} to show the global convergence of the GS method on the $\mathcal{H}_\infty$ state-feedback synthesis problem. The following theorem will be treated formally in the supplementary material.
\begin{thm}[Informal statement]\label{thm4}
Consider the policy optimization problem \eqref{eq:hinfopt} with the $\mathcal{H}_\infty$ cost function defined in \eqref{eq:hinfcost}. Suppose Assumption \ref{assump1} holds, and $K^0\in \mathcal{K}$. The iterations generated from the trust-region version of the GS method (described in \cite[Section 4.2]{kiwiel2007convergence} and restated in the supplementary material) can be guaranteed to stay in $\mathcal{K}$ for all iterations and achieve $J(K^n)\rightarrow J^*$ with probability~one. 
\end{thm}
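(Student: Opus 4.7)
The plan is to reduce the theorem to three previously established ingredients: (i) the abstract convergence theory for the trust-region gradient sampling method of \cite{kiwiel2007convergence}, which asserts that with probability one every cluster point of the iterates is Clarke stationary; (ii) the strict separation property of Lemma \ref{lem4}, which gives $\Delta_0:=\dist(\mathcal{K}_{J(K^0)},\mathcal{K}^c)>0$; and (iii) Theorem \ref{thm1}, which upgrades any Clarke stationary point to a global minimum. The key technical point is to transfer the unconstrained convergence result of \cite{kiwiel2007convergence} to our constrained setting $K\in\mathcal{K}$ without introducing artificial projections.

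First I would verify feasibility of every iterate together with every sampled point. Initialize the trust-region sampling radius by $\delta^0<\Delta_0$, and arrange the trust-region update rule so that $\delta^n\le\delta^0$ for all $n$ (the rule in \cite[Section 4.2]{kiwiel2007convergence} only ever shrinks the radius on null steps, so this cap is preserved). An induction on $n$, using the descent condition built into the trust-region acceptance test, shows that $K^n\in\mathcal{K}_{J(K^0)}$ for all $n$. Combined with Lemma \ref{lem4}, this gives $\mathbb{B}_{\delta^n}(K^n)\subset\mathcal{K}$, so the randomly sampled points are always in $\mathcal{K}$, the $\mathcal{H}_\infty$ gradient oracle is well-defined on them with probability one (Rademacher plus Proposition \ref{prop:regular}), and each trust-region subproblem is meaningful.

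Next I would invoke the Kiwiel convergence machinery. By Lemma \ref{lem2}, the iterates lie in the compact set $\mathcal{K}_{J(K^0)}\subset\mathcal{K}$; on any compact subset of $\mathcal{K}$, Proposition \ref{prop:regular} provides a uniform Lipschitz constant for $J$. Together with the feasibility claim from the previous paragraph, the hypotheses of \cite[Theorem~3.8]{kiwiel2007convergence} are satisfied along our iterate sequence, so with probability one every cluster point $\bar{K}$ of $\{K^n\}$ satisfies $0\in\partial_C J(\bar{K})$. Theorem \ref{thm1} then forces $J(\bar K)=J^*$. Finally, since $\{J(K^n)\}$ is monotonically non-increasing (trust-region descent) and bounded below by $J^*$, it has a limit $\gamma\ge J^*$; the existence of a cluster point with value $J^*$ (guaranteed by compactness of $\mathcal{K}_{J(K^0)}$ and continuity of $J$) forces $\gamma=J^*$, giving $J(K^n)\to J^*$ almost surely.

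The main obstacle will be the careful bookkeeping around the interaction between the adaptive radius $\delta^n$, the constraint $\mathbb{B}_{\delta^n}(K^n)\subset\mathcal{K}$, and the probabilistic nature of the algorithm. In particular, \cite{kiwiel2007convergence} is written for functions locally Lipschitz on all of $\R^d$, whereas our $J$ explodes on $\partial\mathcal{K}$. The cleanest way to handle this, which I would work out in the supplementary material, is to observe that the strict separation from Lemma \ref{lem4} plus the descent property confines the entire algorithmic trajectory (iterates, samples, and trial points) to a fixed compact subset of $\mathcal{K}$; on this compact set one can replace $J$ by any globally Lipschitz extension without changing the algorithm's behavior, after which Kiwiel's unconstrained analysis applies verbatim. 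This reduction is exactly where the coerciveness result of Lemma \ref{lem1} (via Lemma \ref{lem2}) and the strict-separation Lemma \ref{lem4} are indispensable.
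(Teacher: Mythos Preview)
Your proposal is correct and follows essentially the same route as the paper's formal proof in the supplementary material: confine the iterates to $\mathcal{K}_{J(K^0)}$ via Lemma~\ref{lem4} and the trust-region descent test, invoke Kiwiel's cluster-point result, upgrade via Theorem~\ref{thm1}, and finish with the monotone-sequence argument. Two minor bookkeeping corrections are needed: the paper takes $\delta^0<\Delta_0/2$ rather than $\delta^0<\Delta_0$ because the differentiability-perturbation step of the trust-region GS can displace the iterate by up to $2t^n\delta^n$, and the relevant convergence result in \cite{kiwiel2007convergence} for GS is Theorem~3.3, not Theorem~3.8 (the latter is for non-derivative sampling).
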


\textbf{Non-derivative sampling (NS) \cite{kiwiel2010nonderivative}.} The NS method can be viewed as the derivative-free version of the GS algorithm. Suppose we only have the zeroth-order oracle which can evaluate the function value $J(K)$ for $K\in \mathcal{K}$. The main difference between NS and GS is that the NS algorithm relies on estimating the gradient from function values via Gupal’s estimation method. In the unconstrained setting, the cluster points of the NS method can be guaranteed to be Clarke stationary with probability one \cite[Theorem 3.8]{kiwiel2010nonderivative}. We can combine \cite[Theorem 3.8]{kiwiel2010nonderivative} with our results (Theorem \ref{thm1} and Lemma~\ref{lem4}) to prove the global convergence of NS in our setting. A detailed discussion is given in the supplementary material.

\textbf{Model-free implementation of NS.}  When the system model is unknown,  there are various methods available for estimating the $\mathcal{H}_\infty$-norm from data \cite{muller2019gain,muller2017stochastic,rojas2012analyzing,rallo2017data,wahlberg2010non,oomen2014iterative,tu2019minimax,tu2018approximation}.  
Based on our own experiences/tests, the multi-input multi-output (MIMO) power iteration method~\cite{oomen2013iteratively} works quite well as a stochastic zeroth-order oracle for the purpose of implementing NS in the model-free setting.  While the sample complexity for model-free NS is unknown, we will provide some numerical justifications to show that such a model-free implementation closely tracks the convergence behaviors of its model-based counterpart.

\textbf{Interpolated normalized gradient descent (INGD) with finite-time complexity.} No finite-time guarantees for finding $(\delta,\epsilon)$-stationary points have been reported for the GS/NS methods. In \cite{pmlr-v119-zhang20p,davis2021gradient}, the INGD method has been developed as another implementable variant of Goldstein's subgradient method, and is proved to satisfy high-probability finite-time complexity bounds for finding $(\delta,\epsilon)$-stationary points of Lipschitz functions. INGD uses an iterative sampling strategy to generate a descent direction which serves a role similar to the minimal norm element of the Goldstein subdifferential. A first-order oracle for differentiable points is needed for implementing the version of INGD in \cite{davis2021gradient}. 
It has been show \cite{pmlr-v119-zhang20p,davis2021gradient} that for unconstrained nonsmooth optimization of $L$-Lipschitz functions\footnote{We slightly abuse our notation by denoting the Lipschitz constant as $L$. Previously, we have used $L$ to denote a particular matrix used in the LMI formulation for $\mathcal{H}_\infty$ state-feedback synthesis.}, the INGD algorithm can be guaranteed to find the $(\delta,\epsilon)$-stationary point with the high-probability iteration complexity $\mathcal{O}\left(\frac{\Delta L^2}{\epsilon^3 \delta}\log(\frac{\Delta}{p\delta\epsilon})\right)$, where $\Delta:=J(K^0)-J^*$ is the initial function value gap, and $p$ is the failure probability (i.e. the optimization succeeds with the probability $(1-p)$).
We can combine the proofs for
\cite[Theorem 2.6]{davis2021gradient} and  Theorem \ref{thm3} to obtain the following complexity result for our $\mathcal{H}_\infty$ setting. A formal treatment is given in the supplementary material.

\begin{thm}[Informal statement]\label{thm5}
Consider the policy optimization problem \eqref{eq:hinfopt} with the $\mathcal{H}_\infty$ cost function defined in \eqref{eq:hinfcost}. Suppose Assumption \ref{assump1} holds, and the initial policy is stabilizing, i.e. $K^0\in \mathcal{K}$.  Denote $\Delta_0:=\dist(\mathcal{K}_{J(K^0)}, \mathcal{K}^c)>0$, and let $L_0$ be the Lipschitz constant of $J(K)$ over the set $\mathcal{K}_{J(K^0)}$. For any $\delta<\Delta_0$, we can 
choose $\delta^n=\delta$ for all $n$ to ensure that the iterations of
the INGD algorithm stay in $\mathcal{K}$ almost surely, and find a $(\delta,\epsilon)$-stationary point with the high-probability iteration complexity $\mathcal{O}\left(\frac{\Delta L_0^2}{\epsilon^3 \delta}\log(\frac{\Delta}{p\delta\epsilon})\right)$, where $p$ is the failure probability.
\end{thm}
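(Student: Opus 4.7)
The plan is to combine the feasibility/reduction argument used for Theorem~\ref{thm3} with the high-probability complexity bound for INGD proved in \cite{davis2021gradient}. The structural facts from Lemmas \ref{lem1}, \ref{lem2}, and \ref{lem4} let us essentially reduce the constrained nonsmooth policy search problem to the unconstrained setting that the INGD analysis assumes, after which Theorem 2.6 of \cite{davis2021gradient} may be invoked almost verbatim.

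First I would establish almost-sure feasibility of the entire trajectory by induction on $n$. Observe that one outer iteration of INGD updates by $K^{n+1}=K^n-\delta F^n/\norm{F^n}_2$ for a direction $F^n$ whose unit normalization ensures each outer step has displacement exactly $\delta$; moreover the internal randomized sampling in \cite{davis2021gradient} only ever queries the first-order oracle at points drawn from $\mathbb{B}_\delta(K^n)$. Suppose inductively that $K^n\in \mathcal{K}_{J(K^0)}$. Since $\delta<\Delta_0 = \dist(\mathcal{K}_{J(K^0)},\mathcal{K}^c)$, Lemma~\ref{lem4} guarantees $\mathbb{B}_\delta(K^n)\subset \mathcal{K}$, so every sampled point lies in $\mathcal{K}$, the differentiability set $\dom(\nabla J)$ is a full-measure subset of $\mathbb{B}_\delta(K^n)$ by Rademacher's theorem, and thus with probability one all internal gradient queries return well-defined elements of $\partial_\delta J(K^n)$. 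The outer update $K^{n+1}$ similarly lies in $\mathbb{B}_\delta(K^n)\subset \mathcal{K}$; combined with the descent certificate built into INGD (the accept/reject mechanism of Algorithm 1 in \cite{davis2021gradient} ensures $J(K^{n+1})\le J(K^n)$ whenever a non-trivial step is taken), we conclude $K^{n+1}\in \mathcal{K}_{J(K^0)}$, closing the induction.

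Second, with feasibility of the entire run secured almost surely, I would verify that the hypotheses of \cite[Theorem 2.6]{davis2021gradient} hold effectively. Lemma~\ref{lem2} gives compactness of $\mathcal{K}_{J(K^0)}$, and Proposition~\ref{prop:regular} gives local Lipschitzness; together these yield a finite Lipschitz constant $L_0$ of $J$ on a compact neighborhood of $\mathcal{K}_{J(K^0)}$ (one may, for instance, take $\mathcal{K}_{J(K^0)}+\overline{\mathbb{B}_\delta(0)}$, which is still compactly contained in $\mathcal{K}$ by Lemma~\ref{lem4}). Because the trajectory and every internal sample stay inside this neighborhood, the algorithm's behavior on $J$ is indistinguishable from its behavior on any globally $L_0$-Lipschitz extension of $J$, and hence the iteration-complexity analysis in \cite{davis2021gradient} applies unchanged with Lipschitz constant $L_0$ and initial gap $\Delta=J(K^0)-J^*$. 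Invoking that analysis gives a $(\delta,\epsilon)$-stationary iterate within $\mathcal{O}\!\left(\frac{\Delta L_0^2}{\epsilon^3\delta}\log(\frac{\Delta}{p\delta\epsilon})\right)$ outer iterations with probability at least $1-p$.

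The main obstacle is the careful bookkeeping in the first step: INGD is a randomized procedure whose internal line-search and sampling can, in principle, query the oracle at many points per outer iteration, so one must ensure that \emph{every} such query remains in $\mathcal{K}$ and that the null-probability events on which the oracle is ill-defined do not accumulate. The unit-norm step normalization together with the strict separation $\delta<\Delta_0$ from Lemma~\ref{lem4} and the monotone descent property of INGD handles both issues simultaneously. A secondary subtlety is that the Lipschitz constant $L_0$ must be extracted from the compact sublevel set rather than a global modulus, since $J$ is only \emph{locally} Lipschitz on $\mathcal{K}$; this is again resolved by the compactness guarantee of Lemma~\ref{lem2}. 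Once these points are dispatched, the final complexity follows by direct appeal to \cite[Theorem 2.6]{davis2021gradient}.
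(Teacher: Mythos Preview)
Your proposal is correct and follows essentially the same approach as the paper: an induction argument using $\delta<\Delta_0$ and the built-in descent property of INGD to confine all iterates (and internal oracle queries) to $\mathcal{K}_{J(K^0)}$ almost surely, followed by a direct invocation of \cite[Theorem~2.6]{davis2021gradient}. Your treatment is in fact slightly more explicit than the paper's on the extraction of the Lipschitz constant $L_0$ via compactness and on the need to control the internal sampling points, but the argument is the same in substance.
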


\section{Numerical Simulations}
To support our theory, we provide some numerical simulations in this section. The left plot in Figure~\ref{Simulationrelts} shows that GS, NS, INGD, and model-free NS work well for the following example:
\begin{equation} \label{set_matric}
    A = \bmat{1 &0 &-5 \\ -1 &1 &0\\ 0 &0 &1},\,\, B= \bmat{1 \\ 0 \\ -1},\,\, Q = \bmat{2 &-1 &0 \\ -1 &2 &-1 \\ 0 &-1 &2}, \,\, R = 1.
\end{equation}
For this example, we have $J^* = 7.3475$.  We initialize from $K^0 = \bmat{0.4931 &-0.1368 &-2.2654}$, which satisfies $\rho(A-BK^0) = 0.5756 < 1$. The hyperparameter choices are detailed in the supplementary material. We can see that model-free NS closely tracks the trajectory of NS and works well.
In the middle plot of Figure \ref{Simulationrelts}, we  test the NS method on randomly generated cases. We set $A\in \mathbb{R}^{3\times 3}$ to be $I + \xi$,  where each element of $\xi \in \mathbb{R}^{3 \times 3}$ is sampled uniformly from $[0,1]$. For $B\in \mathbb{R}^{3\times 1}$, each element is uniformly sampled from $[0,1]$. We have $Q = I + \zeta I \in \mathbb{R}^{3\times 3}$ with $\zeta$ uniformly sampled from $[0,0.1]$, and $R \in \mathbb{R}$ uniformly sampled from $[1,1.5]$. For each experiment, the initial condition $K^0 \in \mathbb{R}^{1\times 3}$ is also randomly sampled such that $\rho(A-BK^0) < 1$. The NS method converges globally for all the cases. In the right plot, we focus on the model-free setting for \eqref{set_matric}. We decrease the number of samples used in the $\mathcal{H}_\infty$ estimation and show how this increases the noise in the zeroth-order $\mathcal{H}_\infty$ oracle and worsens the convergence behaviors of the model-free NS method. 
Nevertheless, the model-free NS method tracks its model-based counterpart with enough samples.  More numerical results can be found in the supplementary material. 

\begin{figure}
\minipage{0.33\textwidth}
  \includegraphics[width=\linewidth]{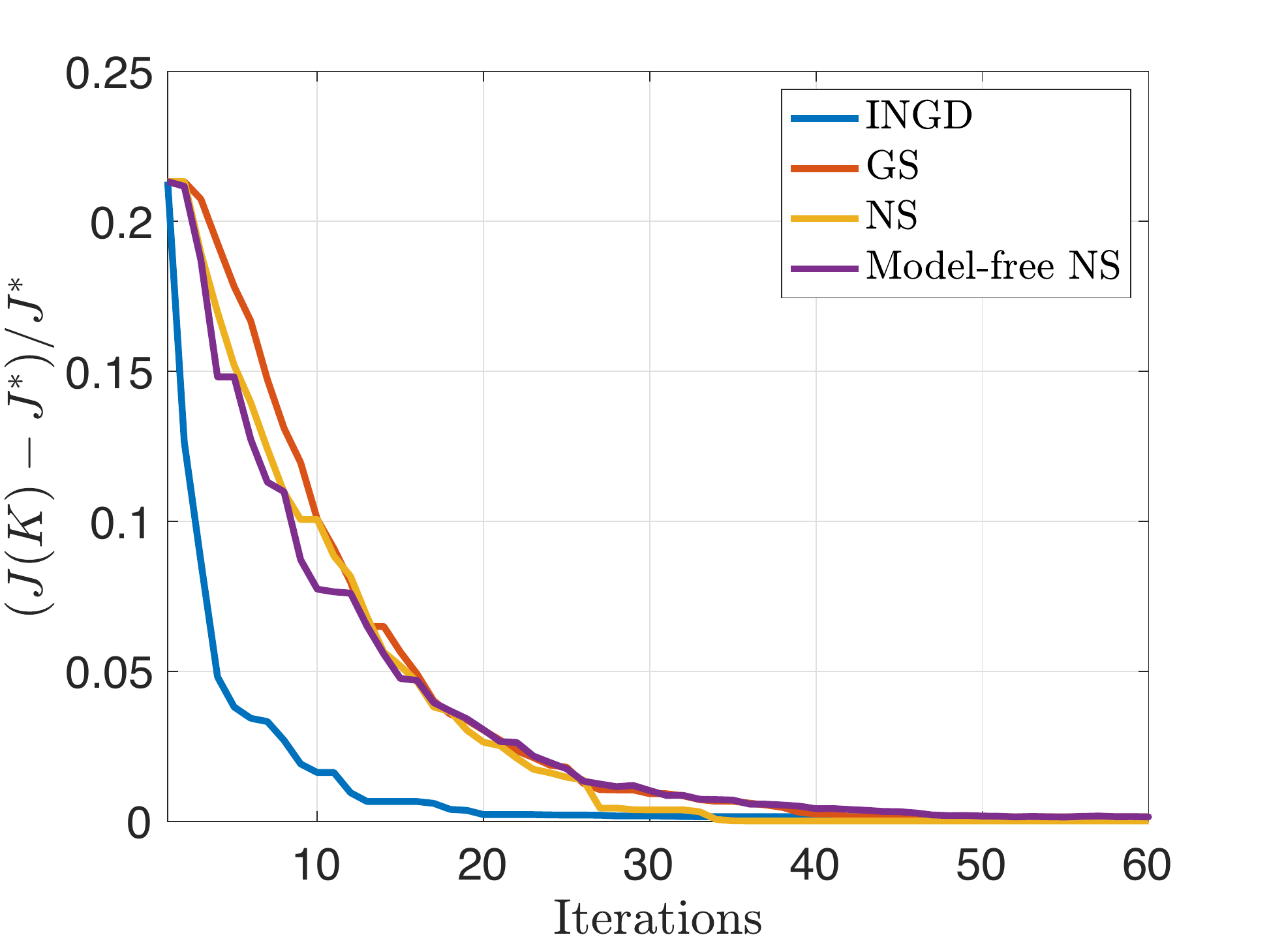}
  \label{fig:awesome_image2}
\endminipage\hfill
\minipage{0.33\textwidth}%
  \includegraphics[width=\linewidth]{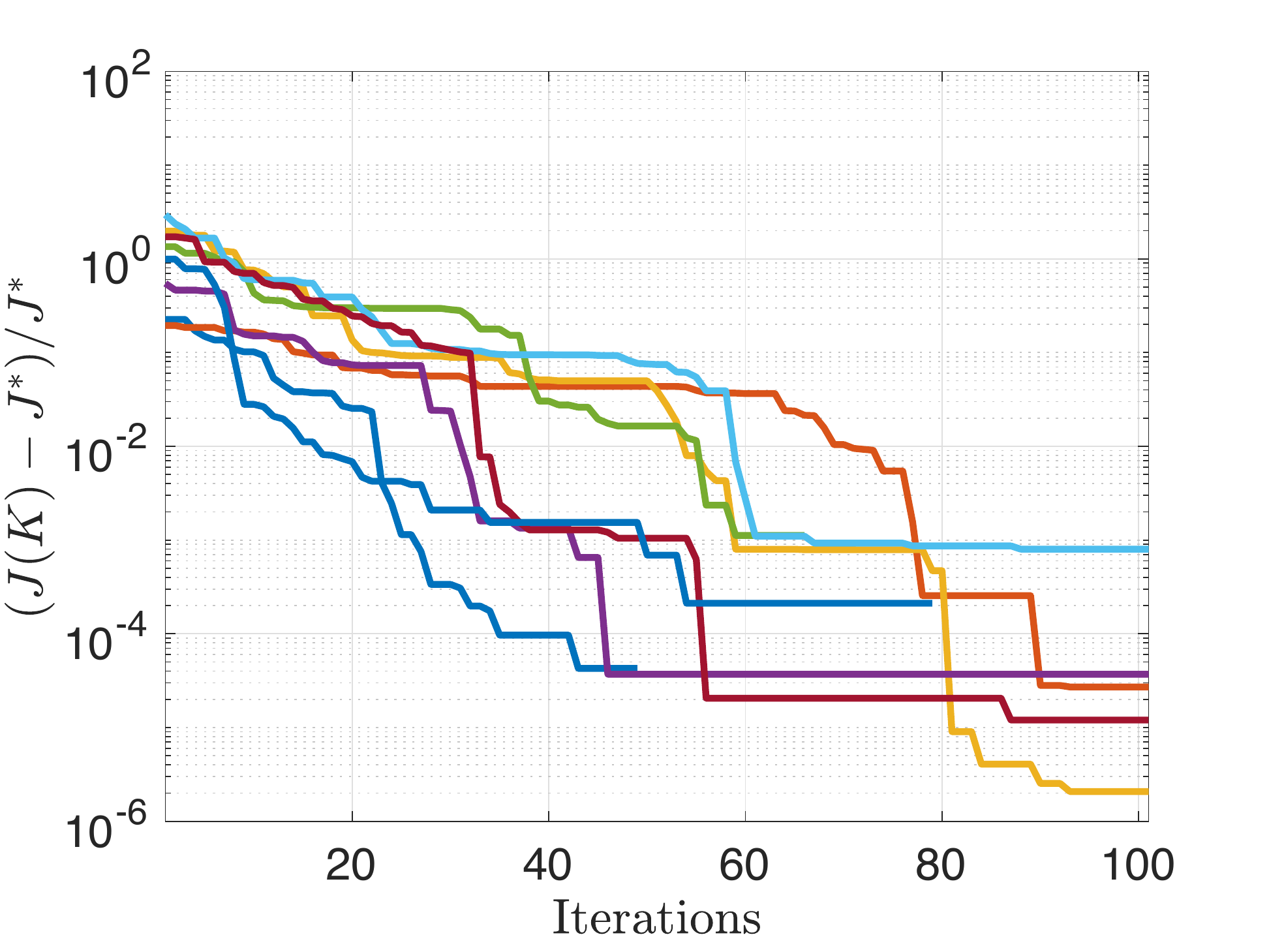}
 \label{fig:awesome_image3}
\endminipage\hfill
\minipage{0.33\textwidth}%
  \includegraphics[width=\linewidth]{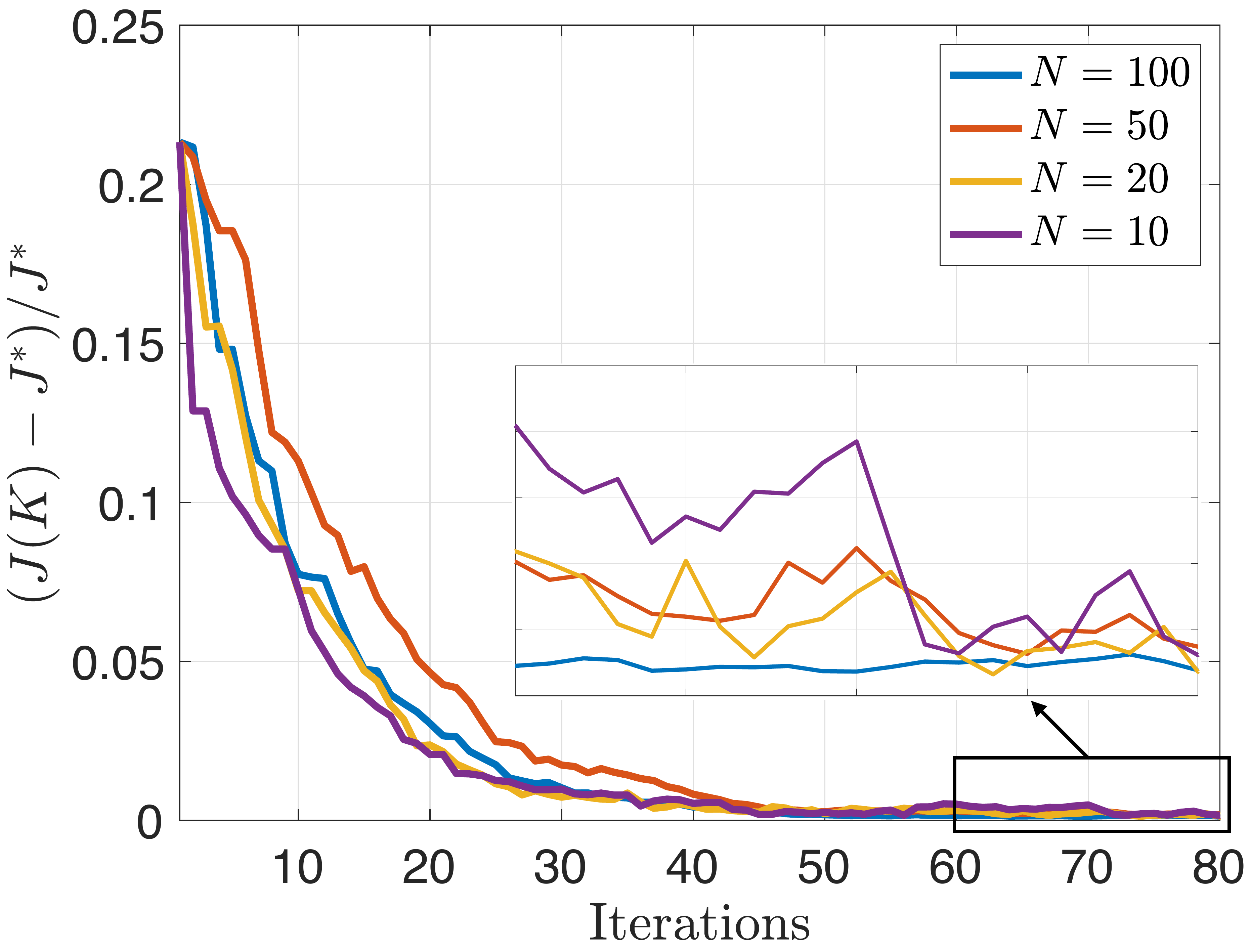}
 \label{fig:awesome_image3}
\endminipage
\caption{Simulation results. Left: The trajectory of relative error of GS, NS, INGD, and Model-free NS methods on \eqref{set_matric}. Middle: The trajectory of relative optimality gap of 8 randomly generated cases for NS method. Right: The trajectory of Model-free NS method with more noisy oracle on \eqref{set_matric}.} \label{Simulationrelts}
\end{figure}

\section{Conclusions and Future Work}

In this paper, we developed the global convergence theory for direct policy search on the $\mathcal{H}_\infty$ state-feedback synthesis problem. Although the resultant policy optimization formulation is nonconvex and nonsmooth, we managed to show that any Clarke stationary points for this problem are actually global minimum, and
the concept of Golstein subdifferential can be used to build direct policy search algorithms which are guaranteed to converge to the global optimal solutions. The finite-time guarantees in this paper are developed only for finding $(\delta,\epsilon)$-stationary points. 
An important future task is to investigate the finite-time bounds for the optimality gap (i.e. $J(K^n)-J^*$) as well as
the sample complexity of direct policy search on model-free $\mathcal{H}_\infty$ control. It is  also of great interests to investigate the convergence properties of direct policy search in nonlinear/output-feedback settings\footnote{Some discussions on possible extensions along this direction have been given in the supplementary material.}.

\begin{ack}
This work is generously supported by the NSF award
CAREER-2048168 and the 2020 Amazon research award.  The authors would like to thank Michael L. Overton, Maryam Fazel, Yang Zheng, Peter Seiler, Geir
Dullerud, Aaron Havens, Darioush Kevian, Kaiqing Zhang, Na Li, Mehran Mesbahi, Tamer Ba\c{s}ar, Mihailo Jovanovic, and Javad Lavaei for the valuable discussions, as well as the helpful suggestions from the anonymous reviewers of NeurIPS.
\end{ack}

\bibliographystyle{abbrv}
{\small
\bibliography{main}
}


\clearpage
\setcounter{section}{0}
\renewcommand{\thesection}{\Alph{section}}

\begin{center}
{\Large\bf Supplementary Material}
\end{center}
\numberwithin{equation}{section}
\numberwithin{thm}{section}
\numberwithin{lem}{section}
\numberwithin{prop}{section}

\section{More Details on Problem Formulation and Background}

\subsection{Difference between $\mathcal{H}_\infty$ control and mixed $\mathcal{H}_2/\mathcal{H}_\infty$ design}
In this paper, we are interested in $\mathcal{H}_\infty$ control whose objective is to design a state-feedback policy $K$ that stabilizes the closed-loop system and minimizes the closed-loop $\mathcal{H}_\infty$ norm (or equivalently the $\ell_2\rightarrow \ell_2$ induced norm) of $G_K$ at the same time. As discussed in the main paper, $\mathcal{H}_\infty$ control can be formulated as $\min_{K\in \mathcal{K}} \norm{G_K}_{2\rightarrow 2}$.  We have mentioned that such a formulation corresponds to a worst-case assumption on the disturbance $\mathbf{w}$. Since $\norm{G_K}_{2\rightarrow 2}$ is a nonsmooth function in $K$, the $\mathcal{H}_\infty$ control leads to a nonsmooth optimization problem. The global optimal $\mathcal{H}_\infty$ norm for this problem is denoted as $\gamma^*$.

In contrast, the design objective of the mixed $\mathcal{H}_2/\mathcal{H}_\infty$ control is to synthesize a  a linear state-feedback policy which minimizes an upper bound on the $\mathcal{H}_2$ cost and satisfies an additional $\mathcal{H}_\infty$-robustness requirement in the form of $\norm{G_K}_{2\rightarrow 2}<\gamma$ with some pre-specified value of $\gamma$. Recall that we denote the non-strict $\gamma$-admissible sublevel set of the $\mathcal{H}_\infty$ objective function as $\mathcal{K}_\gamma=\{K\in \mathcal{K}: \,\norm{G_K}_{2\rightarrow 2}\le \gamma\}$. Similarly, we definie the strict $\gamma$-admissible sublevel set of the $\mathcal{H}_\infty$ objective function as $\tilde{\mathcal{K}}_\gamma=\{K\in \mathcal{K}: \,\norm{G_K}_{2\rightarrow 2}< \gamma\}$. Then for any $\gamma>\gamma^*$, the $\mathcal{H}_2/\mathcal{H}_\infty$ mixed design problem can be formulated as
\begin{align*}
    \min_{K\in \tilde{\mathcal{K}}_\gamma} \tr(P_K)
\end{align*}
where $P_K$ is the minimal positive definite solution  to the following Riccati equation 
\begin{align*}
(A-BK)^\tp (P_K+P_K(\gamma^2I-P_K)^{-1} P_K) (A-BK)+Q+K^\tp RK-P_K=0.
\end{align*}
Notice that as $\gamma\rightarrow \infty$, the above mixed design problem reduces to the LQR problem. For finite $\gamma$,
$\tr(P_K)$ provides an upper bound for the LQR cost.  For the mixed $\mathcal{H}_2/\mathcal{H}_\infty$ control problem,  the closed-loop $\mathcal{H}_\infty$ norm $\norm{G_K}_{2\rightarrow 2}$ only appears in the constraint. One can show that cost function $\tr(P_K)$ is still differentiable in the policy space for any $K\in \tilde{\mathcal{K}}$. The main result in \cite{zhang2021policy} states that given an initial policy $K^0\in \tilde{\mathcal{K}}_\gamma$ for $\gamma>\gamma^*$, then the natural policy gradient method with well-chosen stepsize is guaranteed to stay in $\tilde{\mathcal{K}}_\gamma$ and fine the optimal policy minimizes $\tr(P_K)$. 
This means that the natural policy gradient method applied to this mixed design problem can improve the average performance (via minimizing $\tr(P_K)$) while maintaining the robustness level $\gamma$.  
A missing step is how to use direct policy search to obtain such an initial policy  satisfying $\norm{G_{K^0}}_{2\rightarrow 2}<\gamma$ in the first place. One by-product of the results in this paper is a direct policy search method for obtaining such initial policy provably. 

It is worth mentioning that it is also possible to to tackle the $\mathcal{H}_\infty$ state-feedback synthesis via solving a sequence of mixed $\mathcal{H}_2/\mathcal{H}_\infty$ design problems, and the approximate central path algorithm in \cite{keivan2021model} is developed based on such an idea.  However, there is no global convergence theory reported for such successive minimization approaches \cite{keivan2021model}.

\subsection{Formulating $\mathcal{H}_\infty$ control with arbitrary $\ell_2$ bound on the disturbance}

To address the worst-case disturbance $\mathbf{w}$,
one may be interested in the following problem formulation
\begin{align}\label{eq:A1}
    \min_{\mathbf{u}} \max_{\mathbf{w}:\norm{\mathbf{w}}\le \Lambda} \sum_{t=0}^\infty (x_t^\tp Q x_t+u_t^\tp R u_t)
\end{align}
where $\Lambda$ is an arbitrary positive number. 
An interesting fact is that \eqref{eq:A1} still leads to the same policy optimization problem $\min_{K\in\mathcal{K}} \norm{G_K}_{2\rightarrow 2}$
regardless of the value of $\Lambda$. 
Now we briefly explain this fact.
Again, it is well-know that it suffices to consider linear state-feedback policies for solving \eqref{eq:A1}. Notice that $G_K$ is a linear operator for any fixed $K$. Therefore, it is straightforward to verify 
\begin{align*}
    \max_{\mathbf{w}:\norm{\mathbf{w}}\le \Lambda} \sum_{t=0}^\infty (x_t^\tp Q x_t+u_t^\tp R u_t)=\Lambda^2 \norm{G_K}_{2\rightarrow 2}^2
\end{align*}
Consequently, \eqref{eq:A1} can be equivalently formulated as $\min_{K\in\mathcal{K}} \Lambda^2 \norm{G}_{2\rightarrow 2}^2$. The constant $\Lambda^2$ can be removed without changing the optimization problem. Therefore, no matter what positive value we use for $\Lambda$, \eqref{eq:A1} is always equivalent to $\min_{K\in\mathcal{K}} \norm{G}_{2\rightarrow 2}$.

\subsection{More explanations for Proposition \ref{prop:1}}

Now we provide some explanations for 
 Proposition \ref{prop:1} which states several facts observed in the existing literature.
The continuity of the cost function \eqref{eq:hinfcost} can be seen from its analytic form. The openness of
$\mathcal{K}$ can be seen from the fact that the spectral radius $\rho(A-BK)$ is a continuous function of $K$.   The unboundedness of $\mathcal{K}$ has been proved in \cite{bu2019topological}. 
To see  that  $\mathcal{K}$ can be  nonconvex, we provide a simple example here. Suppose $A=B=I$, and we choose two policies as follows
\begin{equation*}
    K = \bmat{1 &0\\-1 &1}, \,\,\, K' = \bmat{1 &-5\\0 &1}. 
\end{equation*}
Then one can easily check that $K, K' \in \mathcal{K}$, while their convex combination  $\frac{1}{2}(K_1 + K_2) \notin \mathcal{K}$.

Finally, we provide an example to show $J(K)$ is nonconvex in $K$. Suppose $A=B=Q = R = I$ and choose two policies as follows:
\begin{equation*}
    K = \bmat{0.7 &-3.4\\0.2 &1}, \,\,\, K' = \bmat{0 &-1.3\\0.6 &1.2}. 
\end{equation*}
Let $K'' = \frac{1}{2}(K + K')$, then we have $J(K'') = 35.99$ and $\frac{1}{2}(J(K) + J(K') ) = 13.3$. Hence $J(K'') > 0.5J(K) + 0.5J(K')$, which shows $J(K)$ is nonconvex in $K$.

\subsection{Proof of Proposition \ref{prop:regular}}

Proposition \ref{prop:regular} is a consequence of the chain rule stated in
\cite[Theorem 2.3.10]{clarke1990optimization}. As mentioned previously, Proposition \ref{prop:regular} should be considered as a well-known fact, and we do not claim any credits in proving it.
Although not explicitly stated, the proof of Proposition \ref{prop:regular}
has been hinted in the discussion of \cite[Section III]{apkarian2006nonsmooth}.
 We only present the proof for completeness.

To apply \cite[Theorem 2.3.10]{clarke1990optimization}, we can rewrite the closed-loop $\mathcal{H}_\infty$ norm $\norm{G_K}_{2\rightarrow 2}$ as a composition $g_2\circ g_1(K)$ where $g_1$ maps the policy parameter $K$ to a stable transfer function $(Q+K^\tp R K)^{\frac{1}{2}}(zI-A+BK)^{-1}$ which lives in the infinite-dimensional $\mathcal{H}_\infty$ Hardy space, and $g_2$ maps any stable transfer function in that Hardy space to its $\mathcal{H}_\infty$ norm. It is well known that the infinite-dimensional $\mathcal{H}_\infty$ Hardy space consisting of all stable LTI systems is a Banach space \cite{zhou96}. We also know that  $g_2$ is convex on the Hardy space (by triangle inequality of the $\mathcal{H}_\infty$ norm). The convexity of $g_2$ implies that $g_2$ is also subdifferentially regular.  One can also show that $g_1$ is a strictly differentiable mapping from $K$ to the Hardy space given $\rho(A-BK)<1$ and $Q$ being positive definite. Now we can immediately apply \cite[Theorem 2.3.10]{clarke1990optimization} to obtain the desired conclusion in Proposition \ref{prop:regular}.

\begin{rem} For readability, we also provide some brief explanations for the fact
that the mapping $g_1$ is strictly differentiable. 
For each $K\in \mathcal{K}$, $g_1$ maps $K$ to an infinite-dimensional bounded operator which maps from $\ell_2^{n_w}$ to $\ell_2^{n_x}$ and has a Toeplitz structure. The Toeplitz structure can be combined with Fubini's theorem to show simple upper bounds for the operator norm of $g_1$. One such upper bound is provided by $\norm{(Q+K^\tp R K)^{\frac{1}{2}}}\sum_{k=0}^\infty \norm{(A-BK)^k}$, which is obviously finite for $K\in \mathcal{K}$.
Such a bound allows us to study the operator $g_1$ via treating it as an infinite-dimensional Toeplitz matrix whose properties are completely determined by its first block column.
By definition, $g_1$ admits a strict derivative at $K$ if the following operator norm convergence result holds for any $V$:
\begin{equation} \label{operator_convergence}
    \lim_{K' \to K,\, t \downarrow 0}
    \norm{(g_1(K'+tV)-g_1(K'))/t - \langle D_s g_1(K),V \rangle} = 0,
\end{equation}
where $\norm{\cdot}$ denotes the operator norm, 
and $D_s g_1$ is the candidate for the strict derivative defined via taking derivatives of the infinite-dimensional matrix $g_1$ in a block-by-block manner.
Since we know $\rho(A-BK)<1$, the Toeplitz structure of $g_1$ can be combined with Fubini's theorem again to prove the existence of $D_s g_1$ and provide simple upper bounds for the operator norm on the left side of \eqref{operator_convergence}, leading to the desired operator norm convergence result in \eqref{operator_convergence}. 
\end{rem}

\subsection{A non-strict version of the Bounded Real Lemma}
\label{sec:A5}

The bounded real lemma (also referred to as the Kalman–Yakubovich–Popov (KYP) lemma) is an important tool for characterizing the sublevel sets of the closed-loop $\mathcal{H}_\infty$ norm. There is some subtlety in the assumptions needed for different versions of the bounded real lemma. We briefly clarify this subtlety here.  Consider the following closed-loop linear system:
\begin{align}\label{eq:A2}
\begin{split}
    x_{t+1}&=A_{\mathrm{cl}}x_t+B_{\mathrm{cl}}w_t\\
    z_t&=C_{\mathrm{cl}} x_t,
    \end{split}
\end{align}
where $\rho(A_{\mathrm{cl}})<1$.
Recall that the closed-loop $\mathcal{H}_\infty$ norm of the above system (from $\{w_t\}$ to $\{z_t\}$) is defined as 
\begin{align*}
  H_{cl}= \sup_{\omega\in[0, 2\pi]}\lambda_{\max}^{1/2}\big(B_{\mathrm{cl}}^\tp(e^{-j\omega}I-A_{\mathrm{cl}})^{-\tp}C_{\mathrm{cl}}^\tp C_{\mathrm{cl}}(e^{j\omega}I-A_{\mathrm{cl}})^{-1}B_{\mathrm{cl}}\big).
\end{align*}
The strict version of the bounded real lemma \cite[Theorem 21.12]{zhou96} states that $H_{\mathrm{cl}}<\gamma$ if and only if there exists a positive definite matrix $P$ such that
\begin{align*}
    \bmat{A_{\mathrm{cl}}^\tp P A_{\mathrm{cl}}-P & A_{\mathrm{cl}}^\tp P B_{\mathrm{cl}}\\ B_{\mathrm{cl}}^\tp P A_{\mathrm{cl}} & B_{\mathrm{cl}}^\tp P B_{\mathrm{cl}}}+\bmat{C_{\mathrm{cl}}^\tp C_{\mathrm{cl}} & 0\\ 0 & -\gamma^2 I}\prec 0. 
\end{align*}
The above strict-version version of the bounded real lemma does not require any assumptions on $(A_{\mathrm{cl}},B_{\mathrm{cl}},C_{\mathrm{cl}})$. There is also a non-strict version of the bounded real lemma which requires the extra assumption that $(A_{\mathrm{cl}},B_{\mathrm{cl}},C_{\mathrm{cl}})$ is minimal (i.e. $(A_{\mathrm{cl}},B_{\mathrm{cl}})$ is controllable and $(A_{\mathrm{cl}},C_{\mathrm{cl}})$ is observable). 
Consider \eqref{eq:A2} with $(A_{\mathrm{cl}},B_{\mathrm{cl}},C_{\mathrm{cl}})$ being a minimal realization. Then the non-strict version of the bounded real lemma\footnote{See \cite[Section 2.7.3]{befb94} for the continuous-time counterpart.} states that $H_{\mathrm{cl}}\le\gamma$ if and only if there exists a positive definite matrix $P$ such that
\begin{align*}
    \bmat{A_{\mathrm{cl}}^\tp P A_{\mathrm{cl}}-P & A_{\mathrm{cl}}^\tp P B_{\mathrm{cl}}\\ B_{\mathrm{cl}}^\tp P A_{\mathrm{cl}} & B_{\mathrm{cl}}^\tp P B_{\mathrm{cl}}}+\bmat{C_{\mathrm{cl}}^\tp C_{\mathrm{cl}} & 0\\ 0 & -\gamma^2 I}\preceq 0.
\end{align*}
For the main result in this paper, the proof require the above non-strict version of the bounded real lemma. Specifically, in our setting, we have $\norm{G_K}_{2\rightarrow 2}=H_{\mathrm{cl}}$ if we  choose $A_{\mathrm{cl}}=A-BK$, $B_{\mathrm{cl}}=I$, and $C_{\mathrm{cl}}=(Q+K^\tp R K)^{\frac{1}{2}}$. Since $Q$ is positive definite, such choice of $(A_{\mathrm{cl}},B_{\mathrm{cl}},C_{\mathrm{cl}})$ leads to a minimal realization, and hence it is valid to apply the non-strict version of the bounded real lemma to characterize the non-strict sublevel sets of $\norm{G_K}_{2\rightarrow 2}$.

\section{Detailed Proofs of Our Main Results in Sections \ref{sec:land} \& \ref{sec:main1}}
 In the main paper, we have only provided the proof sketches for Lemma \ref{lem1}, Lemma \ref{lem2},  Theorem \ref{thm1}, Lemma \ref{lem3}, and Theorem \ref{thm2}. In this section, we provide detailed proofs for these results. 

\subsection{Proof for Lemma \ref{lem1}}

Suppose we have a sequence $\{K^l\}$ satisfying $\norm{K^l}_2\rightarrow +\infty$. We can choose $\mathbf{w}=\{w_0,0,0,\cdots\}$ with $\norm{w_0}=1$. Then we have:
\begin{align*}
J(K^l) &= \max_{\mathbf{w}:\norm{\mathbf{w}}\le 1} \sum_{t=0}^\infty x_t^\tp (Q+(K^l)^\tp R K^l) x_t\\
 &\ge  w_0^\tp (Q+(K^l)^\tp R K^l) w_0 \\
 &= w_0^\tp Q w_0 + (K^l w_0)^\tp R ( K^l w_0 )\\
 &\ge \lambda_{\min}(R) \norm{K^l w_0}^2,
\end{align*}
where the first inequality holds since we plugged into a specific $\mathbf{w}$ over the $\max$ operation and the matrix $Q+ (K^l)^{\tp} R K^l$ is positive definite. The second inequality uses the fact that $R \ge \lambda_{\min} (R) I$.  Then by carefully choosing $w_0$, we can ensure $J(K^l)\rightarrow +\infty$ as $\norm{K^l} \rightarrow +\infty$.

Next, we assume $K^l\rightarrow K$ where $K$ is on the boundary $\partial \mathcal{K}$. Clearly we have $\rho(A-BK)=1$. 
We will use a frequency-domain argument to prove $J(K^l)\rightarrow +\infty$.
Since $\rho(A-BK)=1$, there exists some $\omega_0$ such that the matrix $(e^{j\omega_0}I-A+BK)$ becomes singular. Obviously, for the same $\omega_0$, the matrix $(e^{-j\omega_0}I-A+BK)$ is also singular. 
Therefore, we have:
\begin{align*}
J(K^l) &= \sup_{\omega\in[0, 2\pi]}\lambda_{\max}^{1/2}\big((e^{-j\omega}I-A+BK^l)^{-\tp}(Q+(K^l)^{\tp}RK^l)(e^{j\omega}I-A+BK^l)^{-1}\big) \\
&= \sup_{\omega\in[0, 2\pi]} \| (e^{-j\omega}I-A+BK^l)^{-\tp}(Q+(K^l)^{\tp}RK^l)(e^{j\omega}I-A+BK^l)^{-1} \|^{1/2} \\
&\ge \lambda^{1/2}_{\min}(Q) \sup_{\omega\in[0, 2\pi]}  ( \|  (e^{-j\omega}I-A+BK^l)^{-1}   \|\cdot \|  (e^{j\omega}I-A+BK^l)^{-1}   \|)^{1/2} \\
&\ge \lambda^{1/2}_{\min}(Q)   (\| (e^{-j\omega_0}I-A+BK^l)^{-1}   \| \cdot \norm{ (e^{j\omega_0}I-A+BK^l)^{-1}})^{1/2} .
\end{align*}
Clearly, the above argument relies on the fact that $Q$ is positive definite.  
Notice that  we have $\rho(A-BK^l) < 1$ for each $l$ , and hence we have $\sigma_{\min} (e^{\pm j\omega_0}I-A+BK^l)>0$, i.e. the smallest singular values of $(e^{\pm j\omega_0}I-A+BK^l)$ are strictly positive for all $l$. As $l \to \infty$, one can show that the matrices $(e^{\pm j\omega_0}I-A+BK^l)$ converge to the singular matrices $(e^{\pm j\omega_0}I-A+BK)$ with $K=\lim_{l\rightarrow \infty} K^l\in \partial \mathcal{K}$. Hence we have $\sigma_{\min} (e^{\pm j\omega_0}I-A+BK^l) \to 0$ as $l \to \infty$, which implies $  \|  (e^{\pm j\omega_0}I-A+BK^l)^{-1}   \| \to +\infty$ as $l \to \infty$. Therefore, we have $J(K^l) \to +\infty$ as $K^l\to K\in \partial \mathcal{K}$. This completes the proof.

\subsection{Proof for Lemma \ref{lem2}}

We first show the compactness of $\mathcal{K}_\gamma$. Since $J$ is continuous, we know $\mathcal{K}_\gamma = \{ K \in \mathcal{K}: J(K)\le \gamma \}$ is a closed set. It remains to show  $\mathcal{K}_\gamma$ is bounded. Suppose there exist $\gamma > 0$ such that $\mathcal{K}_\gamma$ is unbounded. Then there exists a sequence $\{ K^l \}_{l=1}^\infty \subset \mathcal{K}_\gamma$ such that  $\| K^l \|_2 \to +\infty$ as $l \to \infty$. But by coerciveness of $J(K)$, we must have $J(K^l) \to +\infty$ as well, which contradicts that $J(K^l) \le \gamma$ for all $l$. Hence $\mathcal{K}_\gamma$ is bounded. Therefore, $\mathcal{K}_\gamma$ is compact. The path-connectedness of the strict sublevel sets for the continuous-time $\mathcal{H}_\infty$ control problem has been proved in \cite{hu2022connectivity}. We can slightly modify the proof in \cite{hu2022connectivity} to show that the strict sublevel set $\{K\in \mathcal{K}: J(K)<\gamma\}$ is path-connected.  Based on the fact that every non-strict sublevel sets are compact, we can apply \cite[Theorem 5.2]{martin1982connected} to show $\mathcal{K}_\gamma$ is also path-connected.

We can also prove the path-connectedness of $\mathcal{K}_\gamma$ by using  the non-strict version of the bounded real lemma reviewed in Section \ref{sec:A5}. This proof is more self-contained, and hence also included here.
Since $Q$ and $R$ are positive definite, the non-strict version of the bounded real lemma \cite[Section 2.7.3]{befb94} states that $J(K)\le \gamma$ if and only if there exists a positive definite matrix $P$ such that the following non-strict matrix inequality holds
\begin{align}\label{LMI:mid}
\bmat{(A-BK)^\tp P (A-BK) - P & (A-BK)^\tp P \\ P(A-BK) & P}+\bmat{Q+K^\tp R K & 0 \\ 0 & -\gamma^2 I} \preceq 0.
\end{align}
The above matrix inequality is linear in $P$ but not linear in $K$. A standard trick from the control theory can be combined with the Schur complement lemma to convert the above matrix inequality condition to another condition which is linear in all the decision variable \cite{befb94}. Specifically, there exists a matrix function $\lmi(Y,L,\gamma)$ which is linear in $(Y,L,\gamma)$ such that $Y\succ 0$ and  $\lmi(Y,L,\gamma) \preceq 0$ if and only if \eqref{LMI:mid} is feasible with $K=L Y^{-1}$, and ${P}=\gamma Y^{-1}$. For completeness, we provide the detailed derivation of $\lmi(Y,L,\gamma)$ as follows.

\textbf{Step 1}: Let $\tilde{P} = \frac{1}{\gamma} P $, dividing  both sides of \eqref{LMI:mid} by $\gamma$. Then by Schur complement lemma, \eqref{LMI:mid} can be rewritten as:
\begin{align}\label{supeq:lmi2}
\bmat{(A - BK)^\tp \tilde{P} (A-BK) - \tilde{P} + \frac{1}{\gamma}K^\tp R K  & (A-BK)^\tp \tilde{P} &I \\ \tilde{P}(A-BK) & \tilde{P}-\gamma I &0\\ I &0 &-\gamma Q^{-1}} \preceq 0,
\end{align}

To see this, noticing that $-\gamma Q^{-1}$ is negative definite and \eqref{LMI:mid} can be rewritten as:
\begin{equation}
    \bmat{(A-BK)^\tp \tilde{P} (A-BK)  - \tilde{P} + \frac{1}{\gamma} K^\tp R K & (A-BK)^\tp \tilde{P} \\ \tilde{P}(A-BK) & \tilde{P}-\gamma I} -\bmat{I \\ 0 } (- \frac{1}{\gamma} Q) \bmat{I &0} \preceq 0.
\end{equation}

\textbf{Step 2}:  Now we can apply Schur complement lemma to \eqref{supeq:lmi2} again to show its equivalence to:
\begin{align}\label{supeq:lmi4}
\bmat{ -\tilde{P}+ \frac{1}{\gamma} K^\tp R K  &0 &I &(A-BK)^\tp \tilde{P}  \\ 0 &-\gamma I &0 &\tilde{P}  \\ I &0 &-\gamma Q^{-1} &0 \\
\tilde{P}(A-BK) &\tilde{P} &0 &-\tilde{P} } \preceq 0.
\end{align}

To see this, noticing that $-\tilde{P}$ is negative definite and the LHS of \eqref{supeq:lmi2} can be rewritten as:
\begin{equation}
\bmat{ -\tilde{P} + \frac{1}{\gamma} K^\tp R K  & 0 &I \\ 0 & -\gamma I &0\\ I &0 &-\gamma Q^{-1}} - \bmat{(A-BK)^\tp \tilde{P} \\ \tilde{P} \\ 0} (-\tilde{P}^{-1}) \bmat{\tilde{P}(A-BK) &\tilde{P} &0}.
\end{equation}

\textbf{Step 3}:  Then we left and right multiply $\diag( \bmat{\tilde{P}^{-1} &I &I &\tilde{P}^{-1}})$ on both sides of \eqref{supeq:lmi4}, this will not change the definiteness of \eqref{supeq:lmi4} and leads the LHS of \eqref{supeq:lmi4} to:
\begin{align}
 &\bmat{\tilde{P}^{-1} &0 &0 &0 \\ 0 &I  &0 &0\\ 0 &0 &I &0 \\ 0 &0 &0 &\tilde{P}^{-1}} \bmat{ -\tilde{P}+\frac{1}{\gamma}K^\tp R K  &0 &I &(A-BK)^\tp \tilde{P}  \\ 0 &-\gamma I &0 &\tilde{P}  \\ I &0 &-\gamma Q^{-1} &0 \\
\tilde{P}(A-BK) &\tilde{P} &0 &-\tilde{P} }  \bmat{\tilde{P}^{-1} &0 &0 &0 \\ 0 &I  &0 &0\\ 0 &0 &I &0 \\ 0 &0 &0 &\tilde{P}^{-1}}  \\
 =&  \bmat{ -\tilde{P}^{-1}+ \frac{1}{\gamma}\tilde{P}^{-1}K^\tp R K \tilde{P}^{-1} &0 & \tilde{P}^{-1} &\tilde{P}^{-1} (A-BK)^\tp \\ 0 &-\gamma I &0 &I \\ \tilde{P}^{-1} &0 &-\gamma Q^{-1} &0 \\
 (A-BK)\tilde{P}^{-1} &I &0 &-\tilde{P}^{-1}}  
\end{align}

\textbf{Step 4}:  Substituting $Y =\tilde{P}^{-1}$ and $KY = L$ into the above matrix leads \eqref{supeq:lmi4} to:
\begin{align}\label{supeq:lmi5}
 \bmat{ -Y+ \frac{1}{\gamma}L^\tp R L &0 & Y & (AY-BL)^\tp \\ 0 &-\gamma I &0 &I \\ Y &0 &-\gamma Q^{-1} &0 \\
 AY-BL &I &0 &-Y}    \preceq 0
\end{align}

\textbf{Step 5}:  Furthermore, \eqref{supeq:lmi5} is equivalent to:
\begin{equation} \label{supeq:lmi6}
 \bmat{ -Y &0 & Y & (AY-BL)^\tp \\ 0 &-\gamma I &0 &I \\ Y &0 &-\gamma Q^{-1} &0 \\
 AY-BL &I &0 &-Y} - \bmat{L^\tp \\ 0 \\0 \\0} (-\frac{1}{\gamma}R) \bmat{L &0 &0 &0}   \preceq 0
\end{equation}

Applying Schur complement lemma to \eqref{supeq:lmi6} again leads to:
\begin{equation} \label{eq:bigLMI}
\lmi(Y,L,\gamma):= 
\bmat{ -Y  &0 &Y &(AY-BL)^\tp &L^\tp \\
0 &-\gamma I &0 &I &0 \\
Y &0 &-\gamma Q^{-1} &0 &0 \\
AY-BL &I  &0 &-Y &0\\
L &0 &0 &0 &-\gamma R^{-1}}  \preceq 0.
\end{equation}
 
This proves the equivalence between \eqref{eq:bigLMI} and \eqref{LMI:mid}. Therefore, we have:
 \begin{align}\label{eq:convP}
\begin{split}
     &\{ K \in \mathcal{K}: J(K) \le \gamma \}\\ \Longleftrightarrow\, &\{K: \eqref{LMI:mid}\,\, \text{is feasible},\,P\succ 0\}\\ \Longleftrightarrow\, &\{ K=LY^{-1}: \eqref{eq:bigLMI}\,\, \text{is feasible},\,Y\succ 0\}.
\end{split}
 \end{align}
 
 Noticing that the set of $(Y, L)$ satisfying \eqref{eq:bigLMI} and $Y\succ 0$ is convex and hence path-connected. In addition, the map $K = LY^{-1}$ is continuous for positive definite $Y$.  We can conclude that $\mathcal{K}_\gamma =  \{ K \in \mathcal{K}: J(K) \le \gamma \}$ is path-connected.  
Such a proof is actually quite similar to the proof presented in \cite{hu2022connectivity}. The main difference is that the assumptions in this paper allow us to directly use the non-strict version of the bounded real lemma.

\subsection{Proof for Lemma \ref{lem3}}

The proof of this lemma depends on the above convex parameterization $(Y,L)$.
Since $\lmi(Y,L,\gamma)$ is linear in $(Y,L,\gamma)$, the condition \eqref{eq:bigLMI} is convex. 
Suppose we have $K=LY^{-1}$ where $(Y,L, J(K))$ is a feasible point for the convex regime $\lmi(Y,L,J(K))\preceq 0$ and $Y\succ 0$.
In addition, by the non-strict version of the bounded real lemma, we know there exists a pair $(Y^*,L^*)$ such that $K^*=L^* (Y^*)^{-1}$,  $\lmi(Y^*,L^*,J(K^*))\preceq 0$ and $Y^*\succ 0$.
By convexity of the LMI condition $\lmi(Y,L,\gamma)\preceq 0$ and $Y\succ 0$, we know that the line segment between $(Y,L,J(K))$ and $(Y^*,L^*,J(K^*))$ is also in this convex set. Therefore, for any $0\le t\le 1$, we know the point $(Y+t \Delta Y,L+t \Delta L, J(K)+t(J(K^*)-J(K)))$ also satisfies 
$$\lmi(Y+t\Delta Y, L+t\Delta L, J(K)+t(J(K^*)-J(K)))\preceq 0,\,\, Y+t\Delta Y\succ 0$$
where $\Delta L=L^*-L$, and $\Delta Y=Y^*-Y$. Since $Y\succ 0$ and $Y^*\succ 0$, we automatically have $Y+t\Delta Y\succ 0$. Based on \eqref{eq:convP}, we can construct a policy $(L+t\Delta L) (Y+t\Delta Y)^{-1}$ and the resultant closed-loop $\mathcal{H}_\infty$ norm must be smaller than or equal to $J(K)+t(J(K^*)-J(K))$.
Formally, we have 
$$J((L+t\Delta L) (Y+t \Delta Y)^{-1})\le J(K)+t(J(K^*)-J(K)).$$ 
Based on the fact $J(K^*)<J(K)$, we can use the above inequality to construct a direction $d$ such that $J'(K,d)<0$.
Specifically, let's choose $d=\Delta L Y^{-1} -LY^{-1}\Delta Y Y^{-1}$. Then we have
\begin{align*}
J'(K,d)&= \lim_{t\searrow 0} \frac{J(K + t(\Delta L Y^{-1} -LY^{-1}\Delta Y Y^{-1}))-J(K)}{t}\\
&= \lim_{t\searrow 0} \frac{J(LY^{-1} + t(\Delta L Y^{-1} -LY^{-1}\Delta Y Y^{-1}))-J(K)}{t}\\ 
&=_{(a)} \lim_{t\searrow 0} \frac{J((L+t\Delta L) (Y+t \Delta Y)^{-1}) -J(K)}{t}  \\
&\qquad + \lim_{t\searrow 0}  \frac{ J((L+t\Delta L) (Y+t \Delta Y)^{-1} + O(t^2) ) - J((L+t\Delta L) (Y+t \Delta Y)^{-1}) }{t}\\ 
&\le_{(b)} \lim_{t\searrow 0} \left( \frac{J((L+t\Delta L) (Y+t \Delta Y)^{-1})-J(K)}{t}+O(t)\right)\\
& \le \lim_{t\searrow 0} \left( \frac{J(K)+t(J(K^*)-J(K))-J(K)}{t}+O(t)\right)\\
&= J(K^*) - J(K)  < 0,
\end{align*}
where the step $(a)$ relies on the fact that $(Y + t\Delta Y)^{-1} = Y^{-1} - t Y^{-1} \Delta Y Y^{-1} + O(t^2)$ and the step $(b)$ uses the fact that $J(\cdot)$ is locally Lipschitz (Proposition \ref{prop:regular}).  Finally, the last inequality holds since we know $J(K) > J(K^*)$. Notice $d\neq 0$. Otherwise
the above argument still works and we have $J'(K,0)<0$. This is impossible since we know $J'(K,0)=0$. This leads to a contradiction, and we must have $d\neq 0$. This completes the proof for this lemma.

\subsection{Proof for Theorem \ref{thm2}}
We first show that $K^n\in \mathcal{K}_{J(K^0)}$ for all $n$ by induction. 
By choice, we have $\delta^n = \frac{c \Delta_0}{n+1} \le c\Delta_0$ for all $n$ with some $c \in (0,1)$. For $n=1$, we have $K^1 = K^0-c\Delta_0 F^0/\norm{F^0}_2$. Since the norm of  $F^0/\norm{F^0}_2$ is equal to one and $\delta^0 = c \Delta_0 > 0$, we have  $K^1 \in \mathbb{B}_{\delta^0}(K^0)$, where  $\mathbb{B}_{\delta^0}(K^0)$ is the $\delta^0$-ball centered at $K^0$.  Based on the definition of $\Delta_0$, we know $\mathbb{B}_{\delta^0}(K^0) \subseteq \mathcal{K}$, and hence $K^1 \in \mathcal{K}$. In addition, \eqref{eq:gold1} implies $J(K^1) \le J(K^0) - \delta^0 \| F^0 \|_2$.  Hence we have $\mathcal{K}_{J(K^1)} \subseteq \mathcal{K}_{J(K^0)}$, which implies $K^1 \in \mathcal{K}_{J(K^0)}$. Similarly, we can repeat this argument to show $K^n\in \mathcal{K}_{J(K^0)}$ for all $n$. 

Next, we can apply \eqref{eq:descent} to every step and then sum the inequalities over all $n$. Then the following inequality holds for all $N$:
\begin{align} \label{eq:telosum1}
\sum_{n=0}^N \delta^n \norm{F^n}_2 \le J(K^0)-J(K^{N+1}) \le J(K^0)-J^*,
\end{align}
where the second inequality holds since $J(K^{N+1}) \ge J^*$.
Since we have $\sum_{n=0}^\infty \delta^n = c \Delta_0 \sum_{n=1}^\infty \frac{1}{n} =+\infty$, we know $\liminf_{n\rightarrow\infty} \norm{F^n}_2= 0$. Therefore, there exists one subsequence $\{n_i\}$ such that $\norm{F^{n_i}}_2\rightarrow 0$. For this subsequence, the policy parameter sequence $\{K^{n_i}\}$ stays in the compact set $\mathcal{K}_{J(K^0)}$. Hence we know that the resultant policy sequence $\{K^{n_i}\}$ is also bounded and has a convergent subsequence $\{K^{n_{i(l)}}\}$, which converges to some limit point  $K^\infty\in \mathcal{K}_{J(K^0)}$. In addition, we have $ \| F^{n_{i(l)}} \|_2 \to 0$ as well.

When $\delta > 0$ is sufficiently small, we know $\mathbb{B}_{\delta}(K^\infty)\subset \mathcal{K}$. Then there exists $N_\delta$ such that $\mathbb{B}_{\delta^{n_{i(l)}}}(K^{n_{i(l)}}) \subset \mathbb{B}_{\delta}(K^\infty)$ for all $n_{i(l)} \ge N_\delta$. Hence we have $F^{n_{i(l)}} \in \partial_{\delta^{n_{i(l)}}} J(K^{n_{i(l)}}) \subset \partial_{\delta} J(K^\infty)$ for all $n_{i(l)} \ge N_\delta$. Noticing that $F^{n_{i(l)}} \to 0$ and $\partial_{\cdot} J(\cdot)$ is closed, we have $0 \in \partial_{\delta} J(K^\infty)$ for any $\delta > 0$.
It is also well-know that one has $ \bigcap_{n_{i(l)}} \partial_{\delta^{n_{i(l)}}} J(K^\infty) = \partial_C J(K^\infty)$ (see Remark \ref{rm1} for extra explanations).
Hence we have $0 \in \partial_C (K^\infty)$, 
and $K^\infty$ has to be a Clarke stationary point.
Based on Theorem \ref{thm1}, we have $J(K^\infty)=J^*$, and hence
the function-value subsequence $\{J(K^{n_{i(l)}})\}$ converges to $J^*$. Notice that $\{J(K^n)\}$ is monotonically decreasing for the entire sequence $\{n\}$. Hence the sequence $\{J(K^n)\}$ has a limit, and this limit has to be $J(K^\infty)=J^*$. This completes the proof.

\begin{rem} \label{rm1}
For completeness, 
we also explain the well-known fact regarding $\bigcap_{n} \partial_{\delta^{n}} J(K) = \partial_C J(K)$ as $\delta^n\rightarrow 0$. Here $\{\delta^n\}$ is allowed to be any monotonically-decreasing sequence satisfying $\partial_{\delta^0} J(K)\subset \mathcal{K}$. By definition, we have $\partial_C J(K) \subset \partial_{\delta^n} J(K)$ for all $n$.
Since $\{\delta^n\}$ is monotonically decreasing, we also have $\partial_{\delta^{n+1}} J(K) \subset \partial_{\delta^n} J(K)$ for all $n$. Therefore, we have
    $$\partial_C J(K) \subseteq \lim_{\delta^{n} \searrow 0} \partial_{\delta^{n}} J(K) =  \bigcap_{n} \partial_{\delta^{n}} J(K). $$
To show $\bigcap_{n} \partial_{\delta^{n}} J(K)\subset \partial_C J(K)$,  we can use the following contradiction argument, which is standard (e.g. see \cite[Remark 3.7]{Grohs2016}). Let us assume that there exists $V \in \bigcap_{n} \partial_{\delta^{n}} J(K) \setminus \partial_C J(K)$. Denote $S_n := { \{ \cup_{K'\in \mathbb{B}_{\delta^n}(K)} \partial_C J(K') \} }$. Obviously, $S_n$ depends on $K$ and $\delta^n$.  
In \cite{goldstein1977optimization}, $S_n$ has been shown to be compact and nested. 
By \cite[Lemma 2.6]{goldstein1977optimization}, we have
$$V \in \bigcap_{n} \partial_{\delta^{n}} J(K) = \conv \bigcap_{n}  S_n.$$ 
Therefore, we can express $V$ as $V = \sum_j t_j V_j$ with $V_j \in  \bigcap_{n}  S_n$, $t_j \ge 0$, and $\sum_j t_j = 1$. Notice $V_j \in  S_n$ for all $n$. Based on the definition of Clarke subdifferential, we know that for each $n$,  there exists a sequence of differentiable points $\{ K_j^{n,r}  \}$ such that $K_j^{n,r} \to K_j^n$, $ \nabla J(K_j^{n,r}) \to V_j$ as $r\to \infty$, and $\| K_j^n - K \| \le \delta^n$. Then there exists a large enough $r(n,j)$ such that $\| V_j -  \nabla J(K_j^{n,r(n,j)})\| \le \delta^n $, and $\| K_j^{n,r(n,j)} - K \| \le 2\delta^n$. Since $\delta^n \to 0$ as $n \to \infty$, we have found a sequence $\{ K_j^{n,r(n,j)} \}_{n=1}^\infty$, such that $K_j^{n,r(n,j)} \to K$ and $\nabla J(K_j^{n,r(n,j)}) \to V_j$ as $n \to \infty$. Therefore, we have $V_j \in \partial_C J(K)$.  By convexity of $\partial_C J(K)$ \cite[Proposition 2.3]{goldstein1977optimization}, we know $V = \sum_j t_j V_j \in \partial_C J(K)$. This contradicts the assumption that $V$ is not in $\partial_C J(K)$. Therefore, we must have $\bigcap_{n} \partial_{\delta^{n}} J(K) \subset \partial_C J(K)$. Consequently, we know $\bigcap_{n} \partial_{\delta^{n}} J(K) = \partial_C J(K)$.
\end{rem}

\section{Discussions on Implementable Variants and Related Convergence Results}

In this section, we give more detailed discussions on implementable variants of Goldstein's subgradient method and related convergence guarantees.  Formal treatments to the informal theorems presented in Section \ref{sec:imple} are also presented.

\subsection{Gradient sampling}

\begin{algorithm}
\label{algo4}
\SetAlgoLined
\textbf{Require}: $K^0 \in \mathcal{K}$ at which $J$ is continuously differentiable,  initial sampling radius $\delta^0 \in (0, \frac{\Delta_0}{2})$, initial stationarity target $\epsilon^0 \in [0,\infty)$, reduction factors $\mu_{\delta}, \mu_{\epsilon} \in (0,1]$, sample size $m \ge n_x n_u + 1$, line search parameters $(\beta,\theta) \in (0,1) \times  (0,1)$  \\
\textbf{for}~$n=0,1,2,\cdots$ \textbf{do} \\
\qquad Independently sample $\{ K_{n,1}, \cdots K_{n,m} \}$ uniformly from $\mathbb{B}_{\delta^n} (K^n)$ \\
\qquad Compute $F^n$ as the solution of the following convex quadratic program:
\begin{align*}
    &\min\,\, \frac{1}{2} \| F \|_2^2 \\
    & \text{subject to}\,\, F \in \mathcal{F}^n = \conv\{ \nabla J(K^n), \nabla J(K_{n,1}), \cdots, \nabla J(K_{n,m}) \}
\end{align*} \\ 
\qquad \textbf{if} $\| F^n \|_2 \le \epsilon^n$, set $\delta^{n+1} \leftarrow \mu_{\delta}\delta^n$, $\epsilon^{n+1} \leftarrow \mu_{\epsilon}\epsilon^n$, $t^n \leftarrow 0$, $K^{n+1}\leftarrow K^n$, and move to the next round \\
\qquad \textbf{else} set $\delta^{n+1} \leftarrow \delta^n$, $\epsilon^{n+1}  \leftarrow \epsilon^n$, define:
\begin{equation} \label{GS:normilize}
    \hat{F}^n = \delta^n F^n / \| F^n\|_2,
\end{equation}
and compute $t^n$ such that 
\begin{equation} \label{GS:line_search}
    t^n \leftarrow \max \{ t\in \{1,\theta,\theta^2,\cdots \}: J(K^n-t \hat{F}^n) < J(K^n)-\beta t \delta^n \| F^n \|_2 \}
\end{equation} \\
\qquad \textbf{if} $J$ is  continuously differentiable at $K^n - t^n \hat{F}^n$ \\
\qquad \qquad \textbf{then} set $K^{n+1} \leftarrow K^n - t^n \hat{F}^n$ \\
\qquad \qquad \textbf{else} set $K^{n+1}$ randomly as any point where $J$ is continuously differentiable such that
\begin{align} 
    &J(K^{n+1}) < J(K^n) - \beta t^n \delta^n \| F^n \|_2  \label{GS:random1} \\
    &\| K^{n} - t^n \hat{F}^n - K^{n+1} \|_2 \le \min\{t^n, \delta^n \} \| \hat{F}^n \|_2 \label{GS:random2}
\end{align} \\
\textbf{end for}
\caption{Gradient Sampling (GS)}
\end{algorithm}

The gradient sampling (GS) method can be viewed as an approximated version of Goldstein's subgradient method.
The idea of GS has been explained in Section \ref{sec:imple}. 
In the unconstrained optimization setting, it has been shown that every cluster point of GS can be guaranteed to be Clarke stationary (with probability one) \cite{burke2020gradient,burke2005robust,kiwiel2007convergence}. For our problem, we need to ensure that the iterates do not travel outside the feasible set $\mathcal{K}$, and hence we use the trust-region version of GS, which was originally developed in \cite[Section 4.2]{kiwiel2007convergence}.
For clarity, the trust-region version of GS in \cite[Section 4.2]{kiwiel2007convergence} is restated as Algorithm~\ref{algo4}. For our purpose, we need to ensure $\delta^0<\frac{\Delta_0}{2}$.
See \cite[Section 4.2]{kiwiel2007convergence} for more discussions on the convergence theory for 
Algorithm~\ref{algo4}.
As explained in \cite[Section 4.2]{kiwiel2007convergence}, Theorem 3.3 in \cite{kiwiel2007convergence} also holds for the trust-region version of GS, and hence we know that every cluster point of Algorithm~\ref{algo4} is Clarke stationary (with probability one). From the discussion in \cite[Section 4.2]{kiwiel2007convergence}, we also know that the trust-region version of GS can guarantee $\norm{K^{n+1}-K^n}\le 2t^n \delta^n\le 2\delta^0<\Delta_0$ for any $K^n\in \mathcal{K}$.
Recall that $\Delta_0$ is the distance between $\mathcal{K}_{J(K^0)}$ and $\mathcal{K}^c$.
Therefore, by induction, we can show that the iterations generated by Algorithm \ref{algo4} will be guaranteed to stay in the sublevel set $\mathcal{K}_{J(K^0)}$ with probability one.
Then it is straightforward to  combine the above facts and Theorem~\ref{thm1} to show the global convergence of Algorithm \ref{algo4} for the $\mathcal{H}_\infty$ state-feedback synthesis problem. We formalize Theorem \ref{thm4} as the following global convergence result.

\begin{thm} \label{thm:GS}
Consider the policy optimization problem \eqref{eq:hinfopt} with the $\mathcal{H}_\infty$ cost function defined in \eqref{eq:hinfcost}. Suppose Assumption \ref{assump1} holds, and $K^0\in \mathcal{K}$.
Let $\{ K^n \}$ be a sequence generated by Algorithm \ref{algo4} with  $\mu_{\delta}, \mu_{\epsilon} < 1$. With probability one, we have $K^n\in \mathcal{K}$ for all $n$, and the algorithm does not stop such that $\delta^n \downarrow 0$ and $\epsilon^n \downarrow 0$. In addition, the function-value sequence $\{J(K^n)\}$ is monotonically decreasing almost surely, and we have $J(K^n) \to J^*$ as $n \to \infty$ with probability one. 
\end{thm}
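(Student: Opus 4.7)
The proof will combine the existing convergence theory for the trust-region variant of the gradient sampling method (developed in \cite[Section 4.2]{kiwiel2007convergence}) with the landscape result in Theorem \ref{thm1} and the strict separation property in Lemma \ref{lem4}. The three things we must establish are (i) feasibility of every iterate, (ii) convergence of $\delta^n, \epsilon^n$ to zero (i.e. the algorithm does not stall), and (iii) convergence of the function values to the global optimum.

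The first step is a feasibility induction showing that with probability one, every iterate $K^n$, every sampled point $K_{n,i}$, and every intermediate line-search point $K^n-t\hat{F}^n$ ($t\in (0,1]$) lies inside $\mathcal{K}$. The key numerical ingredient is $\delta^0<\Delta_0/2$ combined with the fact that $\delta^n\le\delta^0$ for all $n$ (since $\mu_\delta\le 1$). Assuming $K^n\in\mathcal{K}_{J(K^0)}$, any point at distance at most $\delta^n$ from $K^n$ lies within $\mathcal{K}$ by Lemma \ref{lem4}, so the samples $K_{n,i}$ and the trial points $K^n-t\hat{F}^n$ are all feasible, and the gradients appearing in the quadratic program are well defined almost surely (since $J$ is differentiable almost everywhere by Rademacher and Proposition \ref{prop:regular}). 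The update in \eqref{GS:random1}--\eqref{GS:random2} satisfies $\|K^{n+1}-K^n\|\le 2\delta^n<\Delta_0$, and the line-search/random-replacement rule guarantees $J(K^{n+1})<J(K^n)\le J(K^0)$ whenever $\|F^n\|_2>\epsilon^n$ (and $K^{n+1}=K^n$ otherwise). In either case $K^{n+1}\in\mathcal{K}_{J(K^0)}$, closing the induction. This also yields monotonicity of $\{J(K^n)\}$.

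Next, since the iterates stay in the compact sublevel set $\mathcal{K}_{J(K^0)}$ (Lemma \ref{lem2}) on which $J$ is locally Lipschitz and subdifferentially regular (Proposition \ref{prop:regular}), all the hypotheses for the convergence theorem of the trust-region gradient sampling method \cite[Theorem 3.3 together with Section 4.2]{kiwiel2007convergence} are in force. I will invoke that result directly to conclude that, with probability one, the algorithm does not terminate finitely, the driving parameters satisfy $\delta^n\downarrow 0$ and $\epsilon^n\downarrow 0$, and every cluster point of $\{K^n\}$ is a Clarke stationary point of $J$.

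Finally, we combine the previous step with our landscape result: by Theorem \ref{thm1}, every Clarke stationary point of $J$ over $\mathcal{K}$ is a global minimum, so each cluster point $K^\infty$ of $\{K^n\}$ satisfies $J(K^\infty)=J^*$. Since $\{K^n\}\subset\mathcal{K}_{J(K^0)}$, which is compact, at least one such cluster point exists, and continuity of $J$ together with the monotone decrease of $J(K^n)$ established in the first step forces the entire sequence $\{J(K^n)\}$ to converge to $J^*$ almost surely. The main obstacle is the feasibility bookkeeping in Step 1: one must be careful that the line-search trials, the random replacement points, and the sampled gradient sites are simultaneously inside $\mathcal{K}$, which is precisely why the initial sampling radius is restricted to $(0,\Delta_0/2)$; once this is in place, the rest of the argument follows cleanly from \cite{kiwiel2007convergence} and Theorem \ref{thm1}.
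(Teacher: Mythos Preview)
Your proposal is correct and follows essentially the same approach as the paper's proof: an induction on feasibility using $\delta^n\le\delta^0<\Delta_0/2$ and the bound $\|K^{n+1}-K^n\|\le 2\delta^n<\Delta_0$ to keep iterates in $\mathcal{K}_{J(K^0)}$, then an appeal to \cite[Theorem~3.3 and Section~4.2]{kiwiel2007convergence} for Clarke stationarity of cluster points, followed by Theorem~\ref{thm1} and the compactness/monotonicity argument to force $J(K^n)\to J^*$. Your bookkeeping is, if anything, slightly more explicit than the paper's (you note that the line-search trial points also lie in $\mathcal{K}$), but the structure and ingredients are the same.
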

\begin{proof} 
As commented previously, we can use an induction argument to show that the iterates $\{ K^n \}$ generated by Algorithm \ref{algo4} stay inside the feasible set $\mathcal{K}$ almost surely for all $n$.
Now we present some details for this argument.
For $n=0$, we know $\mathbb{B}_{\delta^0}(K^0)\subset \mathcal{K}$, and hence $\{K_{0,1},\cdots, K_{0,m}\}\subset \mathcal{K}$. With probability one, $J$ is differentiable on  $K_{0,j}$   for all $j\in\{1,2,\cdots,m\}$. Therefore, $F^0$ is well defined with probability one.
There are two possibilities. If $\| F^0 \|_2 \le \epsilon^0$, then
we set $K^1 = K^0$ and shrink $(\epsilon^0,\delta^0)$. Obviously, we still have $K^1\in \mathcal{K}_{J(K^0)}$.  If $\| F^0 \|_2 > \epsilon^0$, the algorithm is guaranteed to find a good descent condition satisfying \eqref{GS:line_search}.
Notice that we have $K^0-t \hat{F}^0\in \mathcal{K}$ for any $t\in \{1,\theta,\theta^2,\cdots\}$.
If $J$ is continuously differentiable at $K^0 - t^0 \hat{F}^0$, then we have $K^{1} = K^0 - t^0 \hat{F}^0 \in \mathbb{B}_{\delta^0}(K^0) \subseteq \mathcal{K}$ as $\| \hat{F}^0 \|_2 = \delta^0$ and $t^0 \le 1$. If $J$ is not continuously differentiable at $K^0 - t^0 \hat{F}^0$, then $K^1$ is randomly generated in a way  that \eqref{GS:random1} and \eqref{GS:random2} hold. From \eqref{GS:random2}, we can combine $\delta^0 \in (0, \Delta_0/2)$ and $t^0 \le 1$ to show $\| K^{1} - K^0 \| \le  2 t^0 \delta^0 < \Delta_0$.  Then we have $K^1 \in \mathcal{K}$ as well. Since  the line search guarantees $J(K^1) \le J(K^0)$,  we further have $K^1 \in \mathcal{K}_{J(K^0)}$. 
To summarize, no matter whether $\norm{F^0}_2$ is larger than $\epsilon^0$ or not, we always have 
$K^1 \in \mathcal{K}_{J(K^0)}\subset \mathcal{K}$. 
Then we can repeat this argument to show $K^n \in \mathcal{K}_{J(K^0)}\subset \mathcal{K}$ for all $n$ (with probability one). From \cite[Section 4.2]{kiwiel2007convergence} (and Theorem~\ref{thm1}), every cluster point of $\{ K^n \}$ has to be a Clark stationary point (and hence a global optimal point) of $J$ in the almost sure sense\footnote{As pointed out in \cite{burke2020gradient}, the convergence theory for GS requires the cost function to be continuously differentiable over a set of full measure. Based on the specific form of the $\mathcal{H}_\infty$ cost \eqref{eq:hinfcost} and  the chain rule in \cite{apkarian2006nonsmooth}, one can see that this is not an issue for the $\mathcal{H}_\infty$ state-feedback synthesis problem.}. 
From Lemma~\ref{lem2}, $\mathcal{K}_{J(K^0)}$ is compact.
Then with probability one, $\{K^n \}$ stays in the compact set $\mathcal{K}_{J(K^0)}$ for all $n$, and  there must exist a subsequence of $\{K^n\}$ which converge to one of its cluster points. Therefore,  this subsequence must converge to the global optimal point of $J$, and the function-value sequence associated with this policy subsequence has to converge to $J^*$ almost surely. Notice that $\{J(K^n)\}$ is monotonically decreasing for the entire sequence $\{n\}$ almost surely. Therefore, we have $J(K^n)\rightarrow J^*$ almost surely.
\end{proof}
The implementation of \eqref{GS:random1} and \eqref{GS:random2} can be done via sampling as discussed in \cite[Section 2]{kiwiel2007convergence}. In addition, $\Delta_0$ is typically unknown in practice, and one just needs to tune the parameter $\delta^0$ until it is sufficiently small. If the cost function is not continuously differentiable on the initial stabilizing policy, one can just randomly sample around that stabilizing policy with a sufficiently small sampling radius to obtain another stabilizing policy meeting the initialization requirement in Algorithm \ref{algo4}.

\vspace{-0.1in}
\subsection{Non-derivative sampling}

 The non-derivative sampling (NS) method was originally developed in \cite{kiwiel2010nonderivative}, and can be viewed as the derivative-free version of GS. 
For the NS method, the gradient oracle in the GS algorithm is replaced by the zeroth-order oracle which is only required to return the function value $J(K)$ for given $K$, and similar convergence guarantees can still be obtained.
 To avoid the gradient evaluation,  the Gupal estimation is used in the NS method to approximate the derivatives from function values. For completeness, we restate the NS method from \cite{kiwiel2010nonderivative} as follows. 
 The mapping $\chi$ used in the algorithm description is given below by \eqref{eq:def_chi}, and  $\mathcal{Z}$ denotes the uniform distribution over the $n_u\times n_x$ unit cube, i.e. $[-1/2,1/2]^{n_u\times n_x}$. Any $z\in \mathcal{Z}$ will be an $n_u\times n_x$ matrix.

\begin{algorithm}
\label{algo5}
\SetAlgoLined
\textbf{Require}: initial stabilizing policy $K^0 \in \mathcal{K}$,   initial sampling radius $\delta^0 \in (0, \Delta_0/2)$, initial stationarity target $\epsilon^0 \in [0,\infty)$, reduction factors $\mu_{\delta}, \mu_{\epsilon} \in (0,1]$, sample size $m \ge n_x n_u + 1$, line search parameters $(\beta,\underline{t},\kappa)$ in  $(0,1)$, and a sequence of positive mollifier parameters defined as $\alpha_n = \alpha_0/(n+1)$ with $\alpha_0 < \min\{\Delta_0/\sqrt{n_x n_u},1\}$. \\
\textbf{for}~$n=0,1,2,\cdots$ \textbf{do} \\
\qquad Independently sample $\{ K_{n,1}, \cdots K_{n,m} \}$ uniformly from $\mathbb{B}_{\delta^n} (K^n)$ \\
\qquad Independently sample $\{ z_{n,1}, \cdots z_{n,m} \}$ uniformly from $\mathcal{Z}$ \\
\qquad Compute $F^n$ as the solution of
\begin{align*}
    &\min\,\, \frac{1}{2} \| F \|_2^2 \\
    & \text{subject to}\,\, F \in \mathcal{F}^n = \conv\{ \chi(K_{n,1},\alpha_n,z_{n,1}), \cdots, \chi(K_{n,m},\alpha_n,z_{n,m}) \}
\end{align*} \\ 
\qquad \textbf{if} $\| F^n \| \le \epsilon^n$, set $\epsilon^{n+1}  \leftarrow \mu_{\epsilon} \epsilon^n$, $\delta^{n+1}  \leftarrow \mu_{\delta} \delta^n$, $t^n \leftarrow 0$, $K^{n+1}\leftarrow K^n$, and move to the next round \\
\qquad \textbf{else} set $\delta^{n+1} \leftarrow \delta^n$, $\epsilon^{n+1}  \leftarrow \epsilon^n$, $\hat{F}^n \leftarrow F^n / \| F^n\|_2$, and $K^{n+1}\leftarrow K^n-t^n \hat{F}^n$, where $t^n$ is determined using the following line search strategy: \\
\qquad \qquad (i) Choose an initial step size $t = t_{ini}^n  = \delta^n\ge t^n_{\min} :=  \min\{\underline{t}, \kappa \delta^n/3\}$\\
\qquad \qquad (ii) If $J(K^n - t \hat{F}^n  ) \le J(K^n) - \beta t \| F^n \|$, return $t^n := t$ \\
\qquad \qquad (iii) If $\kappa t < t^n_{\min}$,  return $t^n := 0$ \\
\qquad \qquad (iv) Set $t:= \kappa t$, and go to (ii). \\
\textbf{end for}
\caption{Non-derivative  Sampling (NS)}
\end{algorithm}
For any $z\in \mathcal{Z}$, 
 we denote the $(i,j)$-entry of $z$ as $z(i,j)$. 
For every $K\in \mathcal{K}$ and $z\in \mathcal{Z}$, we formally define $\chi(K,\alpha,z)$ to be a $n_u\times n_x$ matrix\footnote{Obvious, the dimensions of $\chi(K,\alpha,Z)$ and $K$ are the same.} whose $(i,j)$-th entry is calculated as 
\begin{align}\label{eq:def_chi}
    \chi_{ij}(K,\alpha,z)=\frac{1}{\alpha}\left(J(K+\alpha z+V_{ij}^+)-J(K+\alpha z+V_{ij}^-)\right),
\end{align}
where $V_{ij}^+\in \R^{n_u\times n_x}$ is a matrix whose $(i,j)$-th entry is equal to $\frac{\alpha}{2}-\alpha z(i,j)$ and all other entries are $0$, and $V_{ij}^-\in \R^{n_u\times n_x}$ is a matrix whose $(i,j)$-entry is equal to $-\frac{\alpha}{2}-\alpha z(i,j)$  and all other entries are $0$. For our constrained optimization setup,
the above definition assumes $K+\alpha z+V_{ij}^{\pm}\in \mathcal{K}$ for all $(i,j)$, and hence
implicitly requires that $\alpha$ is small enough such that $K+[-\alpha/2,\alpha/2]^{n_u\times n_x}\subset \mathcal{K}$.
Notice that $\chi(K,\alpha,z)$ is exactly the Gupal estimate for the gradient of some smoothed version (or formally the Steklov average) of $J(K)$.  See \cite[Section 2]{kiwiel2010nonderivative} for more detailed discussions. 
Based on the definition of $\chi(K,\alpha,z)$, the NS method can be implemented as described in Algorithm \ref{algo5}.

For Algorithm \ref{algo5}, we choose $\delta^0 \in (0,\Delta_0/2)$ and $\alpha_0< \min\{\Delta_0/\sqrt{n_x n_u},1\}$ to ensure $F^n$ is well defined and $K^n\in\mathcal{K}$ almost surely for all $n$. 
With the help of \cite[Theorem 3.8]{kiwiel2010nonderivative} and Theorem \ref{thm1}, we can  establish the following global convergence result.

\begin{thm} \label{NS_global}
Consider the policy optimization problem \eqref{eq:hinfopt} with the $\mathcal{H}_\infty$ cost function defined in \eqref{eq:hinfcost}. Suppose Assumption \ref{assump1} holds, and $K^0\in \mathcal{K}$. Let $\{ K^n \}$ be a sequence generated by Algorithm \ref{algo5} with  $\mu_{\delta}, \mu_{\epsilon} < 1$. For every step, $F^n$ is well defined. With probability one, $K^n \in \mathcal{K}$ for all $n$, and the algorithm does not stop such that $\delta^n \downarrow 0$ and $\epsilon^n \downarrow 0$. In addition,
the function-value sequence $\{ J(K^n) \}$  is monotonically decreasing almost surely, and we have $J(K^n) \to J^*$ as $n \to \infty$ with probability one. 
\end{thm}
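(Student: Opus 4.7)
The plan is to mirror the proof of Theorem \ref{thm:GS} essentially line by line, simply replacing the unconstrained convergence result for gradient sampling with the analogous result for the non-derivative sampling scheme, namely \cite[Theorem 3.8]{kiwiel2010nonderivative}. The three ingredients we need to glue together are: (i) feasibility of the iterates and of all the auxiliary points entered into the Gupal estimate \eqref{eq:def_chi}; (ii) the unconstrained conclusion that every cluster point of NS is Clarke stationary with probability one; and (iii) Theorem \ref{thm1}, which upgrades Clarke stationarity to global optimality. Compactness of $\mathcal{K}_{J(K^0)}$ from Lemma \ref{lem2} and the strict separation $\dist(\mathcal{K}_{J(K^0)},\mathcal{K}^c) = \Delta_0 > 0$ from Lemma \ref{lem4} will do most of the geometric work.

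First, I would establish feasibility by induction on $n$. For the base case, $\delta^0 < \Delta_0/2$ places all samples $K_{0,j}$ inside $\mathbb{B}_{\delta^0}(K^0) \subset \mathcal{K}$, and since any Gupal perturbation of $K_{0,j}$ has Frobenius norm at most $\alpha_0\sqrt{n_x n_u}/2 + \alpha_0\sqrt{n_x n_u}/2 = \alpha_0\sqrt{n_x n_u} < \Delta_0$ by the choice of $\alpha_0$, the perturbed points $K_{0,j} + \alpha_0 z_{0,j} + V_{ij}^{\pm}$ stay in $\mathcal{K}$. Because $J$ is continuously differentiable almost everywhere on $\mathcal{K}$ (Proposition \ref{prop:regular} plus Rademacher), $\chi(K_{0,j},\alpha_0,z_{0,j})$ and hence $F^0$ are well defined almost surely. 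For the inductive step, if $\|F^n\|\le \epsilon^n$ the iterate is unchanged and $(\delta^n,\epsilon^n)$ simply shrink; otherwise the line search returns $t^n \le \delta^n \le \delta^0 < \Delta_0/2$ with $J(K^{n+1}) < J(K^n)$, keeping $K^{n+1} \in \mathcal{K}_{J(K^n)} \subseteq \mathcal{K}_{J(K^0)}$, which by Lemma \ref{lem4} sits at distance $\Delta_0$ from $\mathcal{K}^c$. The same distance calculation ensures that all Gupal mollifier evaluations at iteration $n+1$ again remain in $\mathcal{K}$ because $\alpha_{n+1}\le \alpha_0$.

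Having secured feasibility, the second step is to import \cite[Theorem 3.8]{kiwiel2010nonderivative}, which says that if NS never leaves a region on which $J$ is locally Lipschitz, then with probability one $\{K^n\}$ does not terminate finitely, $\delta^n \downarrow 0$, $\epsilon^n \downarrow 0$, and every cluster point of $\{K^n\}$ is Clarke stationary. Feasibility and Proposition \ref{prop:regular} supply exactly this local Lipschitz hypothesis on the relevant region. Combining this with Theorem \ref{thm1} shows that every cluster point $K^\infty$ satisfies $J(K^\infty) = J^*$. Finally, because $\{K^n\}$ lives in the compact set $\mathcal{K}_{J(K^0)}$ (Lemma \ref{lem2}), at least one convergent subsequence exists, and the corresponding function values tend to $J^*$; monotonicity of $\{J(K^n)\}$ along the whole sequence (from the line-search acceptance criterion) then forces $J(K^n)\to J^*$ almost surely.

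The main obstacle, and really the only substantive deviation from the GS proof, is bookkeeping the feasibility of the Gupal perturbations. Unlike gradient sampling, the NS algorithm perturbs each coordinate of $K_{n,j}$ independently by up to $\alpha_n/2$, so the total Frobenius perturbation scales with $\sqrt{n_x n_u}$; the parameter choice $\alpha_0 < \Delta_0/\sqrt{n_x n_u}$ is exactly what keeps the cumulative displacement $\delta^n + \alpha_n\sqrt{n_x n_u}$ strictly below $\Delta_0$ uniformly in $n$. Verifying this bound carefully — and confirming that the line search parameters $(\beta,\underline{t},\kappa)$ together with $t^n\le \delta^n$ do not break the containment in $\mathcal{K}_{J(K^0)}$ — is the one place where the constrained setting requires genuine (but routine) work beyond what is already in \cite{kiwiel2010nonderivative}.
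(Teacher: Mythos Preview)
Your proposal is correct and follows essentially the same route as the paper's proof: induction on $n$ to keep both the iterates and the Gupal evaluation points inside $\mathcal{K}_{J(K^0)}$ (using $\delta^0<\Delta_0/2$ and $\alpha_0<\Delta_0/\sqrt{n_xn_u}$), then invoking \cite[Theorem~3.8]{kiwiel2010nonderivative} together with Theorem~\ref{thm1}, compactness from Lemma~\ref{lem2}, and monotonicity of $\{J(K^n)\}$. Two cosmetic slips worth tidying: $\chi$ is defined purely via function values so its well-definedness needs only feasibility (not Rademacher), and the line search can return $t^n=0$, in which case $K^{n+1}=K^n$ rather than $J(K^{n+1})<J(K^n)$---the paper handles this null-step case explicitly.
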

\begin{proof}
We can use an induction argument to show that  $F^n$ is well defined, and $K^n\in \mathcal{K}$ almost surely.
For $n=0$, we know $\mathbb{B}_{\delta^0}(K^0)\subset \mathcal{K}$, and hence $\{K_{0,1},\cdots, K_{0,m}\}\subset \mathcal{K}$. 
To ensure $F^0$ being well defined, we need $ \chi(K_{0,l},\alpha_0,z_{0,l})$ to be well defined for all $l\in \{1,2,\cdots,m\}$. As discussed above, this can be guaranteed via ensuring $K_{0,l} + [-\alpha_0/2,\alpha_0/2]^{n_u\times n_x} \subset \mathcal{K}$ for all $l$. Since we have chosen $\alpha_0 < \Delta_0 / \sqrt{n_x n_u}$ and $\delta^0<\Delta^0/2$, we must have $K_{0,l} + [-\alpha_0/2,\alpha_0/2]^{n_u\times n_x} \subset \mathbb{B}_{\Delta_0/2}(K_{0,l})\subset \mathbb{B}_{\delta^0+\Delta^0/2}(K^0)\subset \mathcal{K}$. Therefore, $F^0$ is well defined. 
If $\| F^0 \|_2 \le \epsilon^0$, then
we set $K^1 = K^0$ and shrink $(\epsilon^0,\delta^0)$. Obviously, we  have $K^1\in \mathcal{K}_{J(K^0)}$.  
If $\| F^0 \|_2 > \epsilon^0$, we need to compute the step size $t^0$ as described in Algorithm \ref{algo5}. There are two possibilities: if $t^0 = 0$, then we still have $K^1 = K^0$ and hence $K^1\in \mathcal{K}_{J(K^0)}$. If $t^0 \neq 0$, then the algorithm has found a good descent condition such that $J(K^0 - t^0 \hat{F}^0  ) \le J(K^0) - \beta t^0 \| F^0 \|$ holds. In this case, we set $K^1 = K^0 - t^0 \hat{F}^0$. 
Notice that we must have $K^0-t^0 \hat{F}^0\in \mathcal{K}$ as $t^0 \le \delta^0$ and $\norm{\hat{F}^0}_2 = 1$. 
Since the descent condition guarantees $J(K^1) \le J(K^0)$,  we further have $K^1 \in \mathcal{K}_{J(K^0)}$. 
To summarize, no matter whether $\norm{F^0}_2$ is larger than $\epsilon^0$ or not, we always have 
$K^1 \in \mathcal{K}_{J(K^0)}\subset \mathcal{K}$. 
Then we can repeat this argument to show $F^n$ is well defined, and  $K^n \in \mathcal{K}_{J(K^0)}\subset \mathcal{K}$ almost surely for all $n$.
From \cite[Theorem 3.8]{kiwiel2010nonderivative} (and Theorem~\ref{thm1}), every cluster point of $\{ K^n \}$ has to be a Clark stationary point (and hence a global optimal point) of $J$ in the almost sure sense.
From Lemma~\ref{lem2}, $\mathcal{K}_{J(K^0)}$ is compact, and $\{K^n\}$  must admit a subsequence which converge to one of its cluster points. Therefore,  with probability one, this subsequence must converge to the global optimal point of $J$, and the function-value sequence associated with this policy subsequence has to converge to $J^*$. Notice that $\{J(K^n)\}$ is bounded and monotonically decreasing for the entire sequence $\{n\}$ in an almost sure sense. Therefore, we have $J(K^n)\rightarrow J^*$ with probability one.
\end{proof}

Notice that $\Delta_0$ is typically unknown when implementing Algorithm \ref{algo5}. Hence one just tunes the values of $\delta^0$ and $\alpha^0$ until they are sufficiently small.

\subsection{Model-free NS}
When the model is unknown,  one can estimate the value of $J(K)$ from sampled trajectories of the closed-loop system, and implement a model-free version of NS via using the resultant stochastic zeroth-order oracle. 
As a matter of fact, there are many different methods available for estimating the $\mathcal{H}_\infty$-norm from data \cite{muller2019gain,muller2017stochastic,rojas2012analyzing,rallo2017data,wahlberg2010non,oomen2014iterative,tu2019minimax,tu2018approximation}.  
For the model-free NS method, the exact function-value oracle for the $\mathcal{H}_\infty$ cost \eqref{eq:hinfcost} is replaced by a stochastic oracle which relies on noisy estimates of the cost value.
Based on our experience, the multi-input multi-output (MIMO) power iteration method~\cite{oomen2013iteratively} works quite well as a stochastic zeroth-order oracle for the purpose of implementing the model-free NS method.
The main idea of the MIMO power iteration method 
 is that the $\mathcal{H}_{\infty}$ norm of an LTI  MIMO system can be estimated from the largest singular value of its finite-time approximated representation (which can be thought as a matrix), and a specialized time reversal trick can be used to make the computation efficient. Given a black-box simulator for a stable system $G_K$,  the MIMO power iteration method provides a reasonably good oracle for estimating the $\mathcal{H}_\infty$ norm of $G_K$ from sampled trajectories. It is worth mentioning that the estimation quality depends on the length of the time window over which $G_K$ is approximated. 
 We denote this window length as $N$.
 The larger $N$ is, the better the $\mathcal{H}_\infty$-norm estimation is. 
We refer the readers to ~\cite{oomen2013iteratively} for implementation details of the MIMO power iteration method.

 The sample complexity for model-free NS is still  unknown. However, our numerical results show that such  a model-free method tracks the convergence of its model-based counterpart given reasonable amount of data\footnote{Specifically, $N$ should be chosen properly and cannot be too small.}.

\subsection{Interpolated Normalized Gradient Descent (INGD) with finite-time complexity }
 \label{INGD1}
 Both GS and NS methods do not have finite-time complexity guarantees for finding $(\delta,\epsilon)$-stationary points. 
The INGD method was originally proposed 
in \cite{pmlr-v119-zhang20p}, and provide an alternative implementable approximation for Goldstein's subgradient method. The main advantage of INGD is that it yields finite-time iteration complexity for finding $( \delta,\epsilon)$-stationary points. The original version of INGD developed in \cite{pmlr-v119-zhang20p} requires a special generalized gradient oracle (see \cite[Assumption 1]{pmlr-v119-zhang20p} for details).
To relax this requirement, \cite{davis2021gradient} proposed a variant of  INGD  which only requires standard gradient oracle for any differentiable points. Specially, at an iterate $K^n$, the INGD method in \cite{davis2021gradient} relies on Algorithm \ref{algo3} to compute the update direction $F^n$.
Again, we slightly abuse our notation by using $L$ to denote the Lipschitz constant appearing in Algorithm \ref{algo3} (previously, we used $L$ to denote a particular matrix in the LMI formulation for $\mathcal{H}_\infty$ state-feedback synthesis). 
In the unconstrained setup, it has been shown that Algorithm \ref{algo3} terminates and generates a good descent direction with high probability \cite[Corollary 2.5]{davis2021gradient}. Then one can combine Algorithm~\ref{algo3} and \eqref{eq:gold1} to formulate the INGD method, which is formally given as Algorithm \ref{algo2}. In the unconstrained optimization setting,  finite-time iteration complexity for finding $(\delta,\epsilon)$-stationary points have been obtained for Algorithm \ref{algo2} \cite[Theorem 2.6]{davis2021gradient}.

\begin{algorithm}
\label{algo3}
\SetAlgoLined
\textbf{Input}: $K$, $\delta > 0$, $\epsilon > 0$, and the Lipschitz constant $L$  \\
Set $F = \nabla J(\Xi)$ where $\Xi$ is sampled uniformly from $\mathbb{B}_\delta (K)$  \\
\textbf{while}~$\| F \|_2 > \epsilon$ \textbf{and} $ \frac{\delta}{4} \| F \|_2 \ge J(K) - J(K - \delta F/{\|F \|_2})$ ~\textbf{do} \\
\qquad Choose any $r$ satisfying $0 < r < \| F \|_2 \sqrt{1- (1- \frac{\|F \|^2_2}{128 L^2})^2}$ \\
\qquad Sample $\Upsilon$ uniformly from $\mathbb{B}_r(F)$ \\
\qquad Sample $\Xi$ uniformly from $[K,K - \delta \frac{\Upsilon}{\|\Upsilon \|_2}]$ \\
\qquad $F \leftarrow \arg \min_{\Phi \in [F, \nabla J(\Xi)]} \| \Phi \|_2$ \\
\textbf{end while} \\
Return $F$
\caption{MinNorm}
\end{algorithm}

\begin{algorithm}
\label{algo2}
\SetAlgoLined
\textbf{Initial}: $K^0$, $T$  \\
\textbf{Input}: $\delta$, $\epsilon$, and the Lipschitz constant $L$  \\
\textbf{for}~$n = 0,1,\cdots, T$ \textbf{do} \\
\qquad $F^n = \text{MinNorm}(K^n,\delta,\epsilon,L)$ \\
\qquad $K^{n+1} = K^n - \delta F^n / \|F^n \|_2$ \\
\textbf{end} \\
Return $ K^T $
\caption{ Interpolated Normalized Gradient Descent (INGD)}
\end{algorithm}

Now we discuss how to modify the finite-time analysis in \cite{davis2021gradient} for our $\mathcal{H}_\infty$ control problem. It has been shown in \cite[Theorem 2.6]{davis2021gradient} that for unconstrained nonsmooth optimization of $L$-Lipschitz functions, the above INGD algorithm can be guaranteed to find one $(\delta,\epsilon)$-stationary point with the high-probability iteration complexity $\mathcal{O}\left(\frac{\Delta L^2}{\epsilon^3 \delta}\log(\frac{\Delta}{p\delta\epsilon})\right)$, where $\Delta$ is the initial function value gap, and $p$ is the failure probability (i.e. the optimization succeeds with the probability $(1-p)$).  We can choose $\delta<\Delta_0$ to 
ensure that the iterates from Algorithm \ref{algo2} are well defined and
stay in the feasible set $\mathcal{K}$ almost surely, and this immediately leads to the desired conclusion that the above INGD algorithm can find the $(\delta,\epsilon)$-stationary point for the $\mathcal{H}_\infty$ state-feedback synthesis problem with the same high-probability iteration complexity $\mathcal{O}\left(\frac{\Delta L^2}{\epsilon^3 \delta}\log(\frac{\Delta}{p\delta\epsilon})\right)$. 
Now we formalize Theorem \ref{thm5} as follows. 

\begin{thm} \label{thm: finite-time bounds}
Consider the $\mathcal{H}_\infty$ state-feedback control problem \eqref{eq:hinfopt} with the objective function $J(K)$ defined in \eqref{eq:hinfcost}. Suppose Assumption \ref{assump1} holds, and the initial policy is stabilizing, i.e. $K^0\in \mathcal{K}$. Denote $\Delta_0 := \dist(\mathcal{K}_{J(K^0)},\mathcal{K}^c)$. Let $\Delta=J(K^0) - J^*$. Denote the Lipschitz constant of $J(K)$ over the sublevel set $\mathcal{K}_{J(K^0)}$ as $L_0$.  For any  $\delta < \Delta_0 $, the iterations generated by the INGD algorithm will stay in $\mathcal{K}$ almost surely. In addition, the INGD method  finds  a $(\delta,\epsilon)$-stationary point with a high-probability iteration complexity $\mathcal{O}\left(\frac{\Delta L_0^2}{\epsilon^3 \delta}\log(\frac{\Delta}{p\delta \epsilon})\right)$, where $p$ is the failure probability.
\end{thm}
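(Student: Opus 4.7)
The plan is to mirror the structure of the proof of Theorem \ref{thm3} and lift the unconstrained finite-time analysis of \cite[Theorem 2.6]{davis2021gradient} to our constrained setting by using Lemma \ref{lem2} and Lemma \ref{lem4} to guarantee that the algorithm never leaves $\mathcal{K}$. Concretely, I would first observe that the only place where Algorithm \ref{algo2} (and its inner subroutine, Algorithm \ref{algo3}) interacts with the objective is through (i) gradient evaluations at points sampled inside $\mathbb{B}_\delta(K^n)$ and (ii) function evaluations along segments of the form $[K^n, K^n - \delta F^n/\|F^n\|_2]$. Since $\delta < \Delta_0 = \dist(\mathcal{K}_{J(K^0)}, \mathcal{K}^c)$, it suffices to prove by induction that every iterate $K^n$ belongs to the compact sublevel set $\mathcal{K}_{J(K^0)}$; this automatically forces $\mathbb{B}_\delta(K^n) \subset \mathcal{K}$, so every sample point produced by \textsc{MinNorm} lies in $\mathcal{K}$ and $\nabla J$ is defined there with probability one (by Rademacher's theorem and Proposition~\ref{prop:regular}).

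For the induction, the base case $K^0\in\mathcal{K}_{J(K^0)}$ is immediate. For the step, I would invoke the guarantee of \cite[Corollary 2.5]{davis2021gradient}: with probability at least $1 - p_n$ for a suitably chosen per-iteration failure budget $p_n$, Algorithm \ref{algo3} terminates and returns $F^n$ satisfying either $\|F^n\|_2 \le \epsilon$ (in which case $K^n$ is a $(\delta,\epsilon)$-stationary point and we can stop) or the descent bound $J(K^n) - J(K^n - \delta F^n/\|F^n\|_2) \ge \tfrac{\delta}{4}\|F^n\|_2$. In the latter case $K^{n+1} = K^n - \delta F^n/\|F^n\|_2 \in \mathbb{B}_\delta(K^n) \subset \mathcal{K}$, and the descent guarantees $J(K^{n+1}) < J(K^n) \le J(K^0)$, so $K^{n+1} \in \mathcal{K}_{J(K^0)}$, closing the induction. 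A key technical point is that \cite[Corollary 2.5]{davis2021gradient} requires the Lipschitz constant $L$ used inside Algorithm~\ref{algo3}; here we take $L = L_0$, the Lipschitz constant of $J$ over an open neighborhood of $\mathcal{K}_{J(K^0)}$ (which is finite because $\mathcal{K}_{J(K^0)}$ is compact by Lemma~\ref{lem2} and $J$ is locally Lipschitz by Proposition~\ref{prop:regular}), and since all sampling stays within this neighborhood, the Lipschitz hypothesis of the unconstrained analysis is met.

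Given the per-iteration descent of at least $\delta\epsilon/4$ (as long as $\|F^n\|_2>\epsilon$), a standard telescoping argument shows that the algorithm produces a $(\delta,\epsilon)$-stationary point within $N = \mathcal{O}(\Delta/(\delta\epsilon))$ outer iterations, exactly as in Theorem~\ref{thm3}. The inner complexity of \textsc{MinNorm} contributes an additional $\mathcal{O}(L_0^2/\epsilon^2)$ gradient calls per successful outer iteration (\cite[Corollary 2.5]{davis2021gradient}), yielding the overall count $\mathcal{O}(\Delta L_0^2/(\epsilon^3\delta))$. Finally, to translate the per-call failure probability into a global one, I would set $p_n = p/N$ and apply a union bound over the $N$ calls to \textsc{MinNorm}; this produces the $\log(\Delta/(p\delta\epsilon))$ factor through the dependence of the inner iteration count on $\log(1/p_n)$, exactly as in the unconstrained statement.

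The main obstacle is not in the finite-time counting, which is routine after invoking \cite[Corollary 2.5]{davis2021gradient}, but rather in carefully coupling the stochastic feasibility argument with the complexity bound: we must ensure that the high-probability event on which Algorithm \ref{algo3} terminates correctly and produces a descent direction is the \emph{same} event on which we maintain $K^n \in \mathcal{K}_{J(K^0)}$. The cleanest way to handle this is to define the good event iteratively—on each step conditioning on $K^n$ being feasible, applying the inner guarantee, and then propagating feasibility—and to do the union bound on the joint event rather than marginally. Once this bookkeeping is in place, invoking Theorem~\ref{thm1} is not needed for finite-time complexity (only for asymptotic global optimality), so the proof reduces entirely to verifying feasibility preservation plus the unconstrained complexity bound.
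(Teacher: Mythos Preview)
Your proposal is correct and follows essentially the same approach as the paper: both argue by induction that $K^n\in\mathcal{K}_{J(K^0)}$ (using $\delta<\Delta_0$ so that $\mathbb{B}_\delta(K^n)\subset\mathcal{K}$ and the descent property of \textsc{MinNorm} gives $J(K^{n+1})\le J(K^n)$), and then invoke the unconstrained complexity result from \cite{davis2021gradient}. The paper's proof is terser---it simply cites \cite[Theorem 2.6]{davis2021gradient} for the complexity bound once feasibility is established---whereas you spell out the inner/outer iteration counting and the union-bound bookkeeping; your extra care about coupling the high-probability termination event with feasibility preservation, and about taking $L_0$ on a neighborhood of $\mathcal{K}_{J(K^0)}$ rather than just on the sublevel set itself, are sound refinements that the paper glosses over.
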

\begin{proof}
We first use an induction argument to show that the iterates of INGD are well defined and stay inside the feasible set $\mathcal{K}$ (almost surely) when $\delta < \Delta_0$. For $n=0$, notice that the variable $\Xi$ (used in Algorithm \ref{algo3}) has to be in $\mathcal{K}$ almost surely and well defined.  The reason is that we sample $\Xi$ uniformly from $[K,K - \delta^0 \frac{\Upsilon}{\|\Upsilon \|_2}]$ and $\delta^0 < \Delta_0$. Next, the update direction $F^0$ satisfies either $\| F^0 \|_2 \le \epsilon$ or $J(K^0 - \delta^0  F^0 /\| F^0 \|_2) \le J(K^0) -  \frac{\delta^0}{4}\|F^0\|_2$.  If $\|F^0\| \le \epsilon$, the INGD algorithm will terminate as we have found a $(\delta,\epsilon)$-stationary point. If $\|F^0\| > \epsilon$, we have $K^{1} = K^0 - \delta^0 F^0/\|F^0\|_2$. We must have $K^1 \in \mathcal{K}$ as well since $\delta^0 < \Delta_0$. In addition, since $J(K^1) \le J(K^0)$, we have $K^1 \in \mathcal{K}_{J(K^0)}\subset \mathcal{K}$. Then we can repeat this argument to show $K^n \in \mathcal{K}_{J(K^0)}$ for all $n$ almost surely, and the high probability complexity bound becomes just a direct consequence of \cite[Theorem 2.6]{davis2021gradient}.  
\end{proof}

\section{Numerical Experiments}
In this section, we provide simulation results to support our theory. The simulations are executed on a desktop computer with a 3.3 GHz Intel Xeon W-1350 processor, and the implementation is done using MATLAB R2021b. 
We tested GS, NS, model-free NS, and INGD on several examples, and the details are given below.

\subsection{More details on the simulation results in Figure \ref{Simulationrelts}}

\textbf{Implementation of INGD, GS, NS, and model-free NS methods:}
For the problem matrices given in \eqref{set_matric}, we have $J^* = 7.3475$.  Now we discuss how we obtain the left plot in Figure \ref{Simulationrelts}.
In order to obtain the Clark stationary point by using INGD method, we need to  run the INGD algorithm with $\delta$ decreasing to 0. To this end, we start with $\epsilon = 1\times 10^{-5}$ and $\delta = 0.01$. Whenever the INGD method successfully finds a $(\delta,\epsilon)$-stationary point, we reduce $\delta$ by a constant factor $0.7$.  As $\delta$ decreases to 0, one should expect that the INGD algorithm approaches a Clark stationary point. In addition, we also implement the GS and NS methods, where the algorithm parameters are set as in Table \ref{GSpara}. Finally, to implement the model-free NS method for the given problem matrices \eqref{set_matric}, we choose the sample size in the $\mathcal{H}_\infty$-norm estimation oracle as $N = 100$ (this just means that we approximate the LTI system over a 100-step time window). 

\begin{table}
  \caption{Algorithm parameters for GS and NS}
  \label{GSpara}
  \centering
  \begin{tabular}{llllllllllll}
    \toprule
    Parameter  &$(n_x,n_u,m)$    & $\delta^0$ &$\epsilon^0$ &$\mu_\delta$ &$\mu_\epsilon$ &$\delta_{opt}$ &$\epsilon_{opt}$ &$\beta$ &$\theta$ &$\alpha_n$   \\
    \midrule
    GS &$(3,1,4)$   & 0.01 &100 &0.5 &0.5 &0 &0 &0.5 &0.9  &N/A     \\
    NS &$(3,1,4)$   & 0.01 &100 &0.5 &0.5 &0 &0 &0.5 &0.9  &${0.1}/(n+1)$      \\
    \bottomrule
  \end{tabular}
\end{table}
 
\textbf{Implementation of randomly generated cases}: The middle plot of Figure \ref{Simulationrelts} demonstrates the performance of the NS algorithm on some randomly generated cases. In particular, we set $A\in \mathbb{R}^{3\times 3}$ to be $I + \xi$,  where each element of $\xi \in \mathbb{R}^{3 \times 3}$ is sampled uniformly from $[0,1]$. We set $B\in \mathbb{R}^{3\times 1}$ with each element uniformly sampled from $[0,1]$. We set $Q = I + \zeta I \in \mathbb{R}^{3\times 3}$ with $\zeta$ uniformly sampled from $[0,0.1]$. We set $R \in \mathbb{R}$ uniformly sampled from $[1,1.5]$. For each experiment, the initial condition $K^0 \in \mathbb{R}^{1\times 3}$ is also randomly sampled such that $\rho(A-BK^0) < 1$.

\textbf{Implementation of Model-free NS}: The right plot of Figure \ref{Simulationrelts} provides the simulation results of the model-free NS method with different choices of sample size $N$ used in the $\mathcal{H}_\infty$-norm estimation oracle.
The hyperparameters of the model-free NS method are set to be the same as the ones used in NS. The only new issue is that we need to specify a sample size in the MIMO power iteration method for evaluating $J(K)$ from sampled trajectories.
Specifically, we choose $N=[100\,\, 50\,\, 20\,\, 10]$.  
As we can see from the right plot in Figure \ref{Simulationrelts}, as we decrease the sample size, the $\mathcal{H}_\infty$-norm oracle become more noisy, and the algorithm performance has been degraded with more oscillations.

\textbf{Gradient oracle for differentiable points.}
To implement INGD and GS, we need an oracle which is capable of computing the gradient of $J(K)$ for any differentiable points. We provide two options for implementing such gradient oracle. 
For the first option, 
we can just use a finite-difference scheme to estimate the gradient at any differentiable points. 
Specifically, for any differentiable point $K$, we can estimate $\nabla J(K)$ as a matrix whose $(i,j)$-th entry is computed as $\frac{J(K + h V_{ij})-J(K - h V_{ij})}{2h}$, where $V_{ij}\in \R^{n_u\times n_x}$ is a matrix whose $(i,j)$-th entry is 1 and all other entries are 0, and $h$ is a small positive parameter (e.g. $h = 0.0001$).
For the second option, we can compute the gradient at differentiable points using some explicit analytic formula based on singular value decomposition. Specifically, we can use a special case of the subgradient analytical formula provided in \cite{apkarian2006nonsmooth}.
To this end, let us rewrite $J(K)$ as follows:
\begin{align} \label{hin_cost2}
J(K) =\sup_{\omega\in[0, 2\pi]}\sigma_{\max} (H(K,\omega) ),
\end{align}
where $H(K,\omega) = (Q+K^{\tp}RK)^{\frac{1}{2}}(e^{j\omega}I-A+BK)^{-1}$ and $\sigma_{\max}$ denotes the maximum singular value. Notice that $J(K)$ is differentiable at $K$ if the $\mathcal{H}_\infty$ norm of $H(K,\omega)$ is achieved at one frequency $\omega_0$ and the largest singular value of $H(K,\omega_0)$ has multiplicity one \cite{Subramani93}. 
Then we can perform the singular value decomposition of $H(K,\omega_0)$ and take $u_1$ (which is the unit left-singular vector) and $v_1$ (which is the unit right-singular vector) corresponding to the largest singular value.  
Denote $H_1(K) = (Q+K^\tp R K)^{\frac{1}{2}}$ , $H_2(K) = (e^{j\omega_0}I - A+BK)^{-1}$, and define $\Gamma = \int_0^\infty  e^{-\tau H_1(K)} H_2(K) v_1 u_1^* e^{-\tau H_1(K)} d\tau$. Then  the gradient of $J(K)$ at differentiable point $K$ can be calculated as $\nabla J(K) =  \operatorname{Re}(R K (\Gamma + \Gamma^\tp) -(H_2(K) v_1 u_1^* H(K,\omega_0) B)^\tp )$. The derivation of the above formula uses the chain rule of the total derivative, and can be viewed as a special case of the general subgradient formula given in \cite{apkarian2006nonsmooth}. 
The two options lead to similar performances, as shown in 
Figure \ref{fig:twooracles}.  We tested both options for implementing INGD and GS with the problem matrices given in  \eqref{set_matric}. From Figure \ref{fig:twooracles},  we can see that both options work well and generate  similar trajectories.

\begin{figure}
\minipage{0.45\textwidth}
  \includegraphics[width=\linewidth]{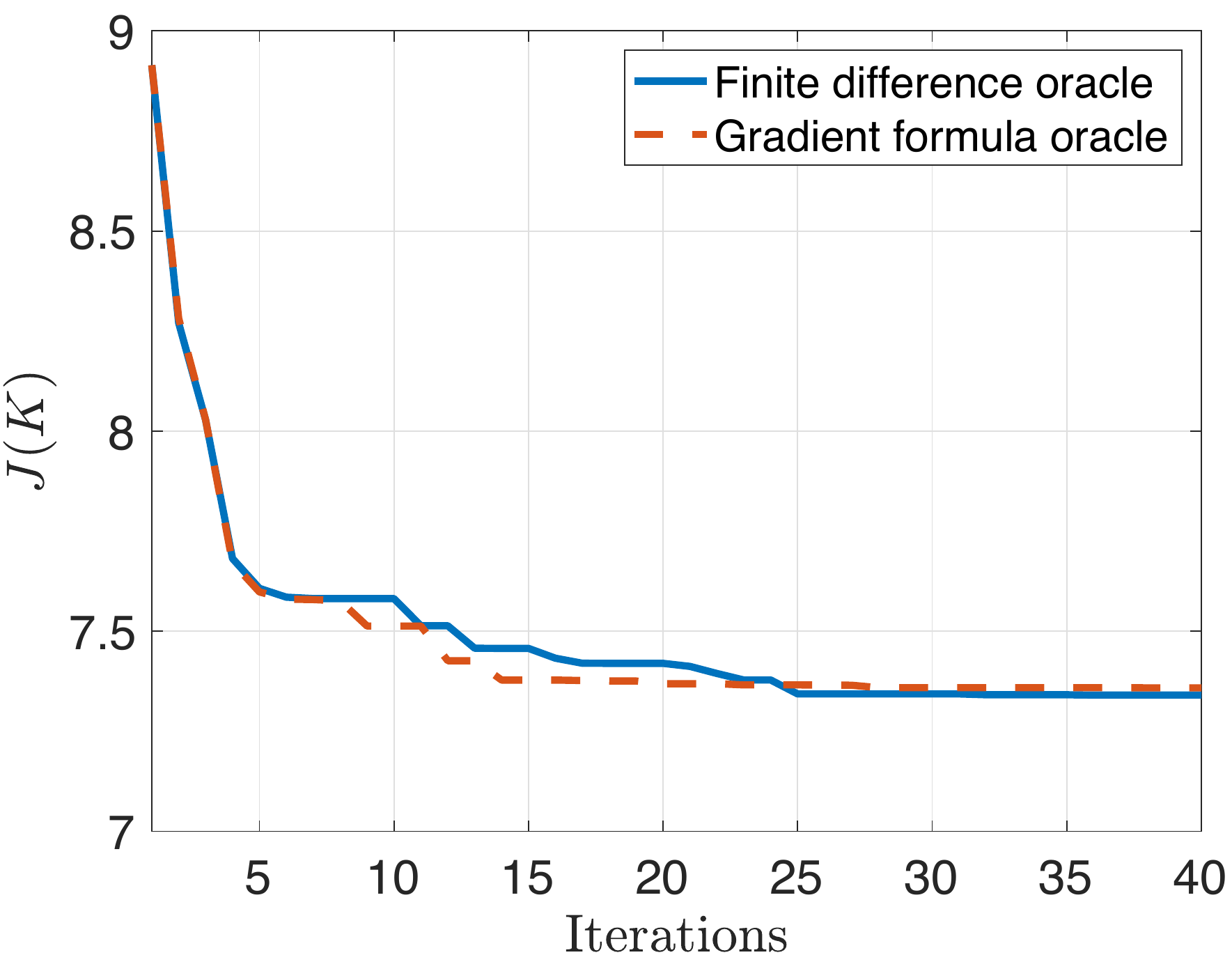}
\endminipage\hfill
\minipage{0.45\textwidth}
  \includegraphics[width=\linewidth]{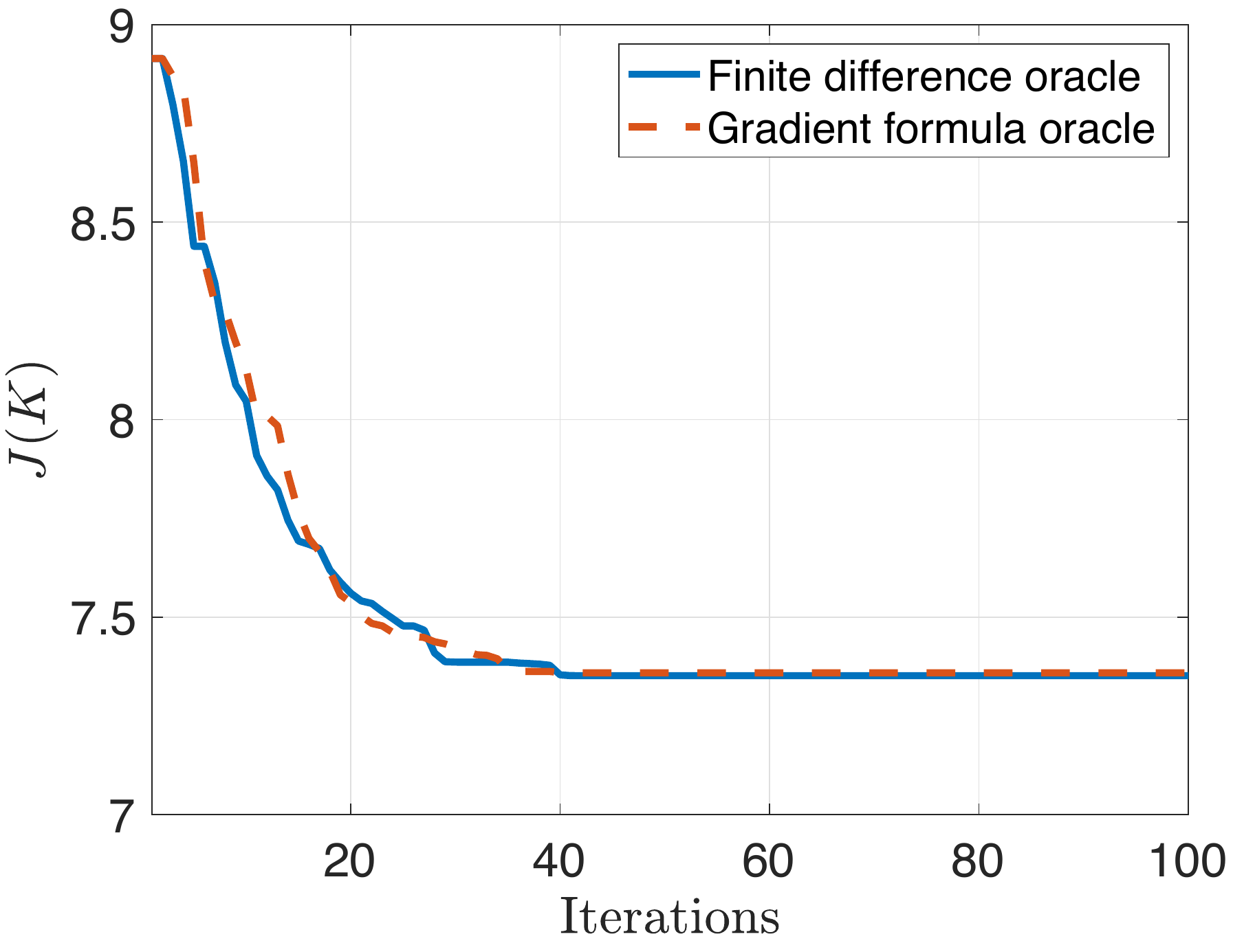}
\endminipage
 \caption{Simulation result for finite-difference oracle and gradient formula oracle. Left: INGD method. Right: GS method.} \label{fig:twooracles}
\end{figure}

\subsection{Numerical results for INGD with constant choices of $(\delta,\epsilon)$}

The simulation results for INGD with constant choices of $(\delta, \epsilon)$ are shown in Figure \ref{fig:INGD}, where we set $\delta = 0.01$ and $\epsilon = 1\times 10^{-8}$.
From the left figure of Figure \ref{fig:INGD}, it can be seen that it takes 10 steps for INGD to find a $(\delta,\epsilon)$-stationary point. At step 10, we have $\| F \|_2 < \epsilon$. However, $J(K)$ does not converge to the optimal value as shown in the right plot of Figure \ref{fig:INGD}. This  result  confirms that $(\delta,\epsilon)$-stationarity does not imply being $\delta$-close to an
$\epsilon$-stationary point of $J$ as commented in  \cite{pmlr-v119-zhang20p,davis2021gradient}.

\begin{figure}
\minipage{0.45\textwidth}
  \includegraphics[width=\linewidth]{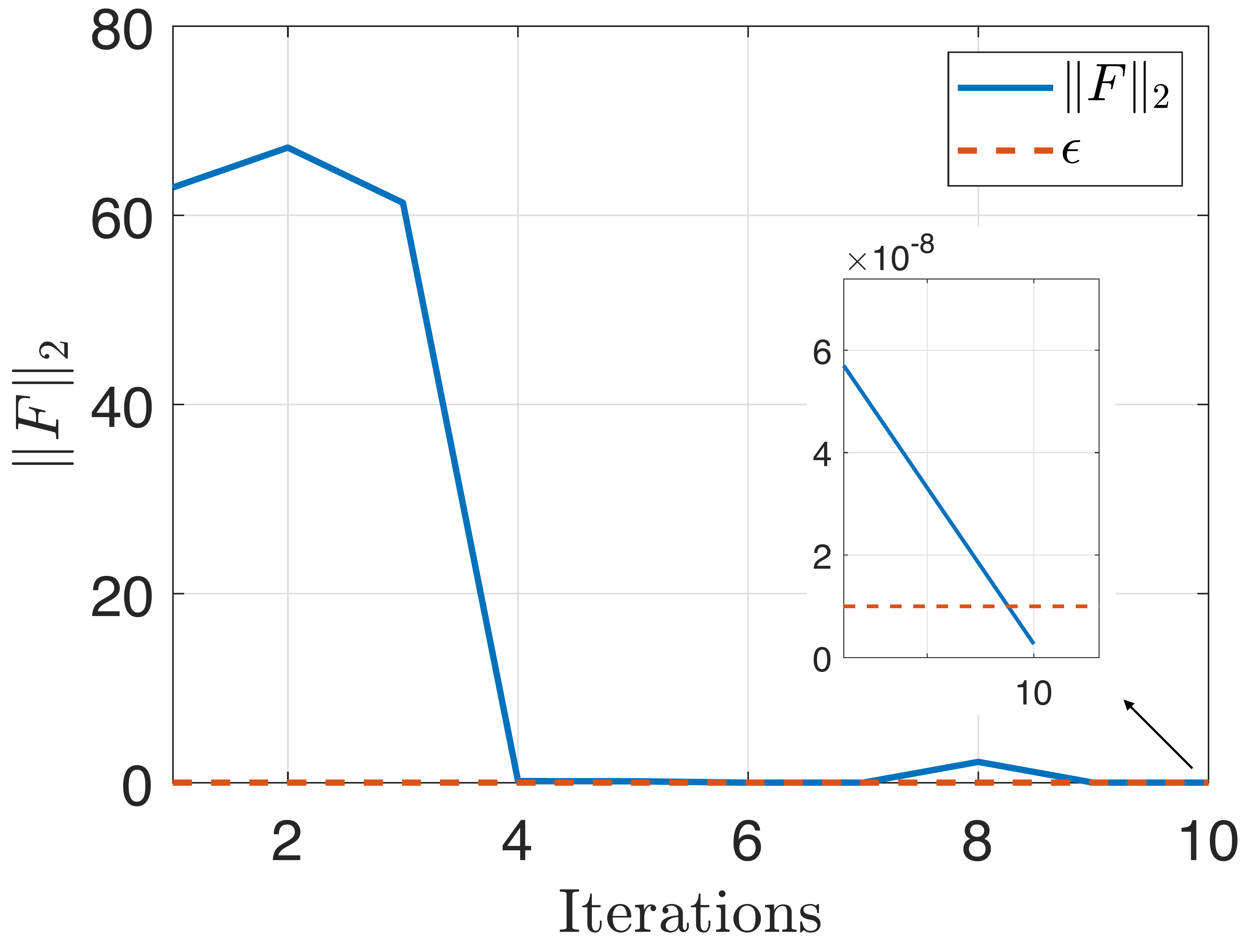}
\endminipage\hfill
\minipage{0.45\textwidth}
  \includegraphics[width=\linewidth]{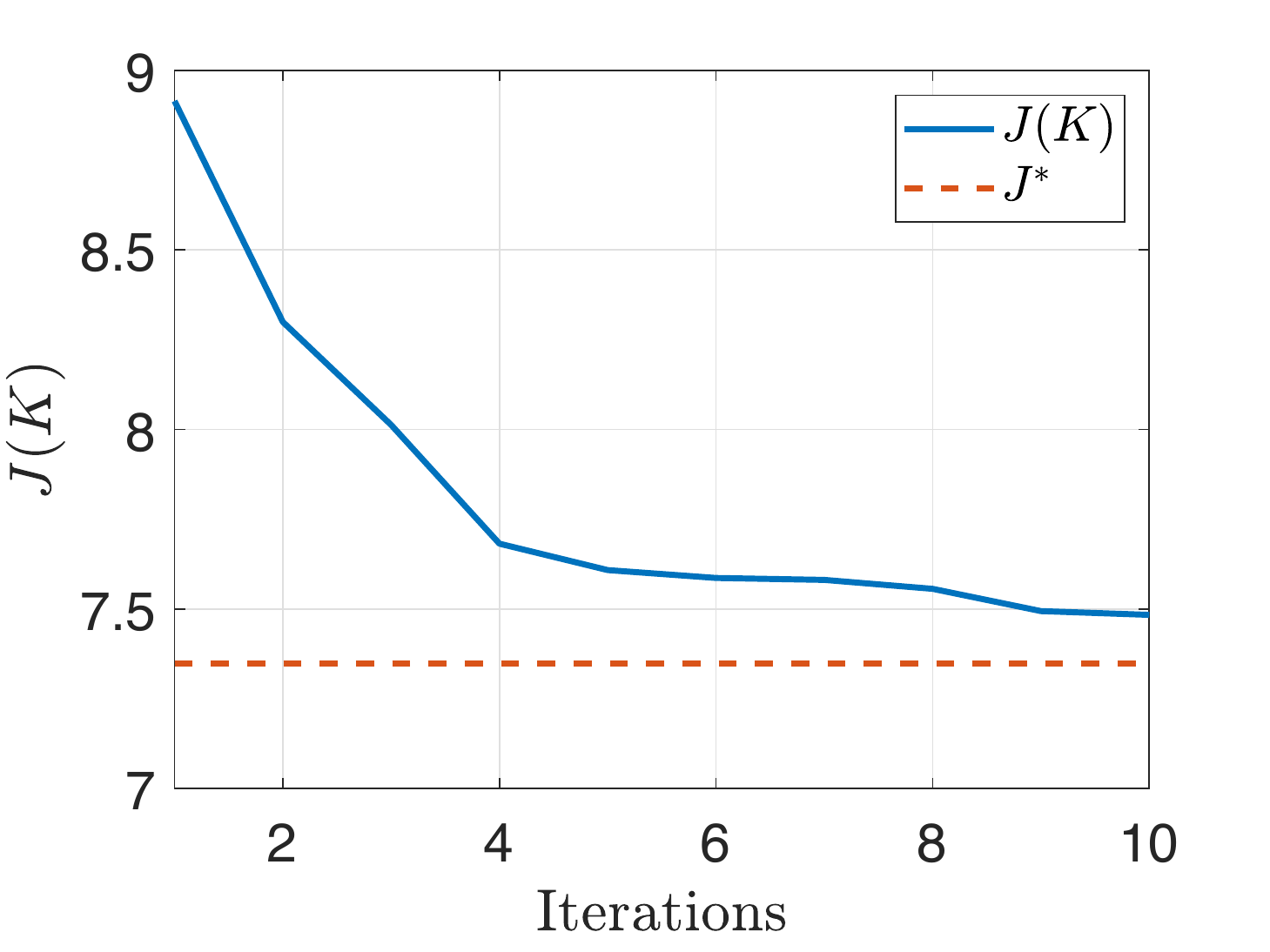}
\endminipage
 \caption{Simulation result for one-time INGD methods with $\delta = 0.01$ and $\epsilon = 1\times 10^{-8}$. Left: the trajectory of the $\|F \|_2$. Right: the trajectory of function value $J(K)$.} \label{fig:INGD}
\end{figure}

\subsection{More examples}
We further tested the NS and GS methods on some slightly larger examples. In particular, 
we first consider the following set of problem matrices:
\begin{align} 
        &A = \bmat{1.7865 &0.3912 &0.8758 &0.5996\\
    0.2756 &1.3175 &0.7692 &0.4848 \\
    0.4764 &0.9786 &1.0618 &0.7591 \\
    0.4489 &0.7918 &0.6014 &1.7520}, \,\, 
B = \bmat{0.1303 &0.0312\\ 0.1309 &0.0528 \\ 0.7452 &0.6727\\ 0.2460 &0.0743}, \label{set_matric2} \\ 
& Q = 1.0613I_{4}, \,\,
R = 1.1315I_{2}. \nonumber
\end{align} 
For the above example, we have $J^* = 43.26$ and we select the initialization $K^0$ to be:
$$K^0 = \bmat{2.4364  &2.2337 &2.4867   &1.5551 \\    12.1213   &-4.6823    &2.1718   &-2.5906},$$
which satisfies $\rho(A - BK^0) = 0.9567 < 1$.
The results are reported in the left plot of Figure \ref{hig_dim}.

In addition, we also perform the NS and GS algorithm on some randomly generated cases. Similarly, we set $A\in \mathbb{R}^{4\times 4}$ to be $I + \xi$,  where each element of $\xi \in \mathbb{R}^{4 \times 4}$ is sampled uniformly from $[0,1]$. We set $B\in \mathbb{R}^{4\times 2}$ with each element uniformly sampled from $[0,1]$. We set $Q = I + \zeta I \in \mathbb{R}^{4\times 4}$ with $\zeta$ uniformly sampled from $[0,0.1]$, and $R = I+ \upsilon I \in \mathbb{R}^{2\times 2}$ with $\upsilon$ uniformly sampled from $[0,0.5]$. For each experiment, the initial condition $K^0 \in \mathbb{R}^{2\times 4}$ is also chosen such that $\rho(A-BK^0) < 1$.

Figure \ref{hig_dim} shows the simulation results on these examples. The left plot demonstrates the convergence of NS and GS methods with the problem matrices in \eqref{set_matric2}. The middle  and right plots demonstrate the performance of NS and GS on the randomly generated examples, respectively. It can be seen that both GS and NS work quite well on these examples. 

\begin{figure}
\minipage{0.33\textwidth}
  \includegraphics[width=\linewidth]{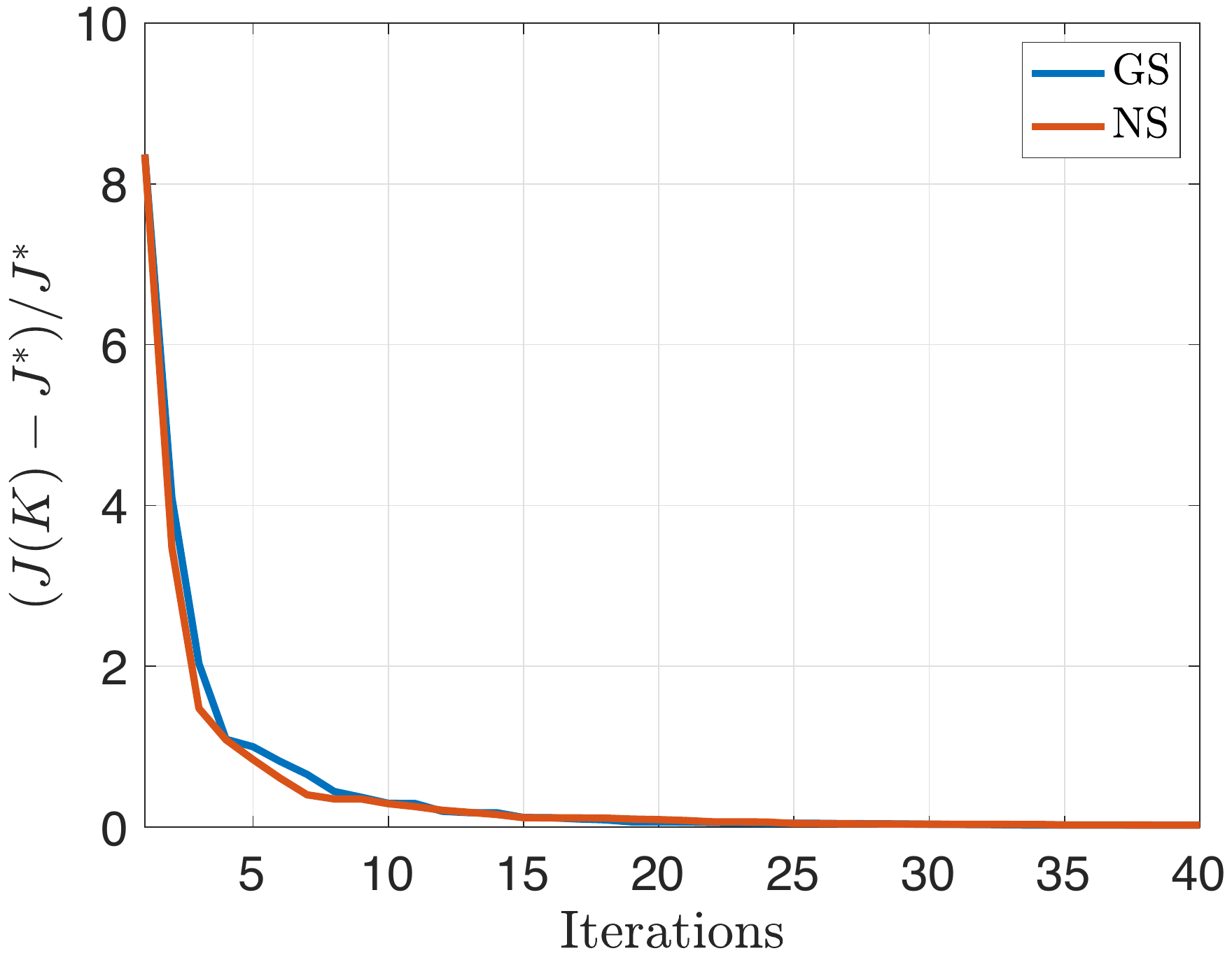}
\endminipage\hfill
\minipage{0.33\textwidth}%
  \includegraphics[width=\linewidth]{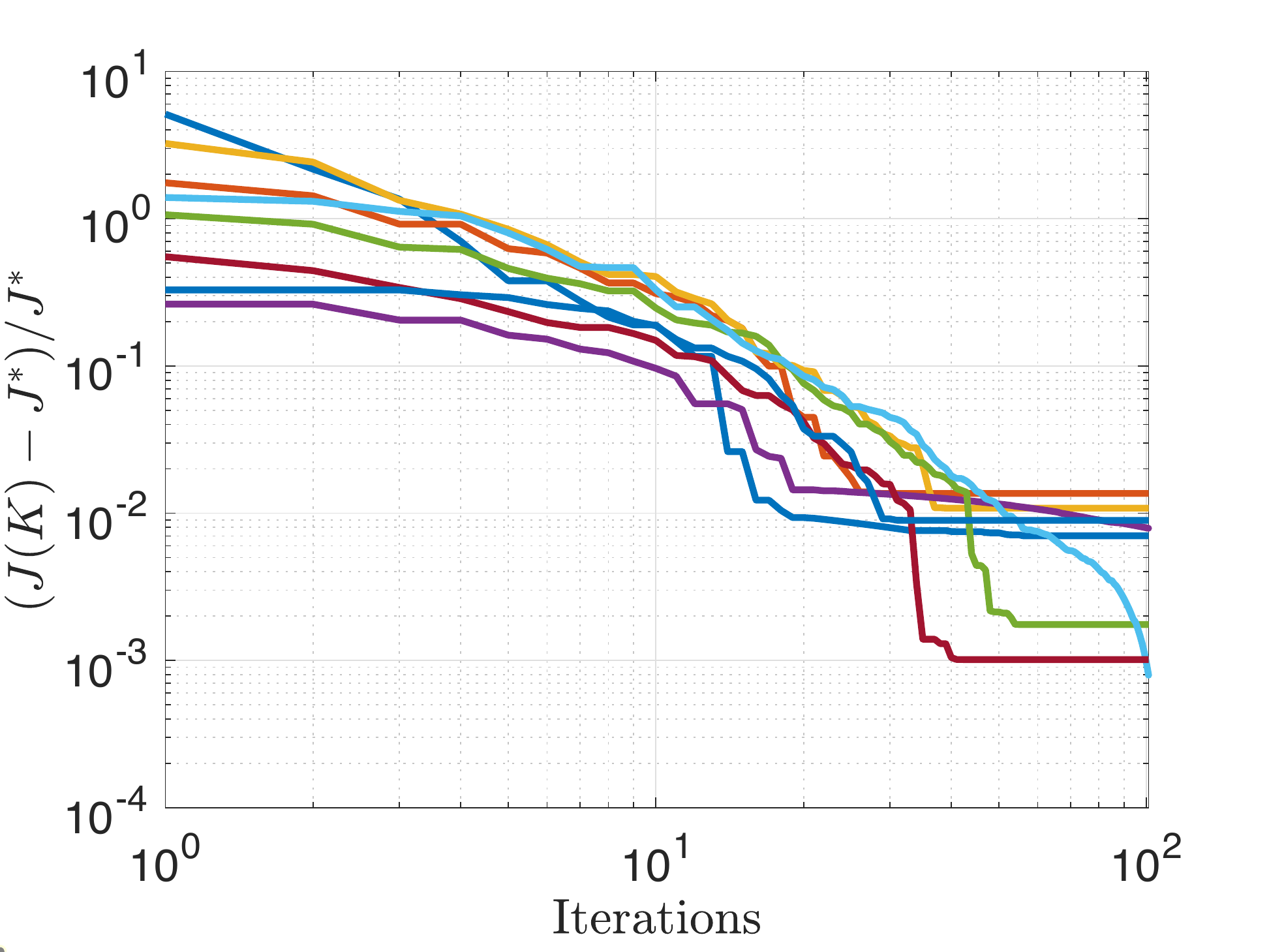}
\endminipage
\minipage{0.33\textwidth}%
  \includegraphics[width=\linewidth]{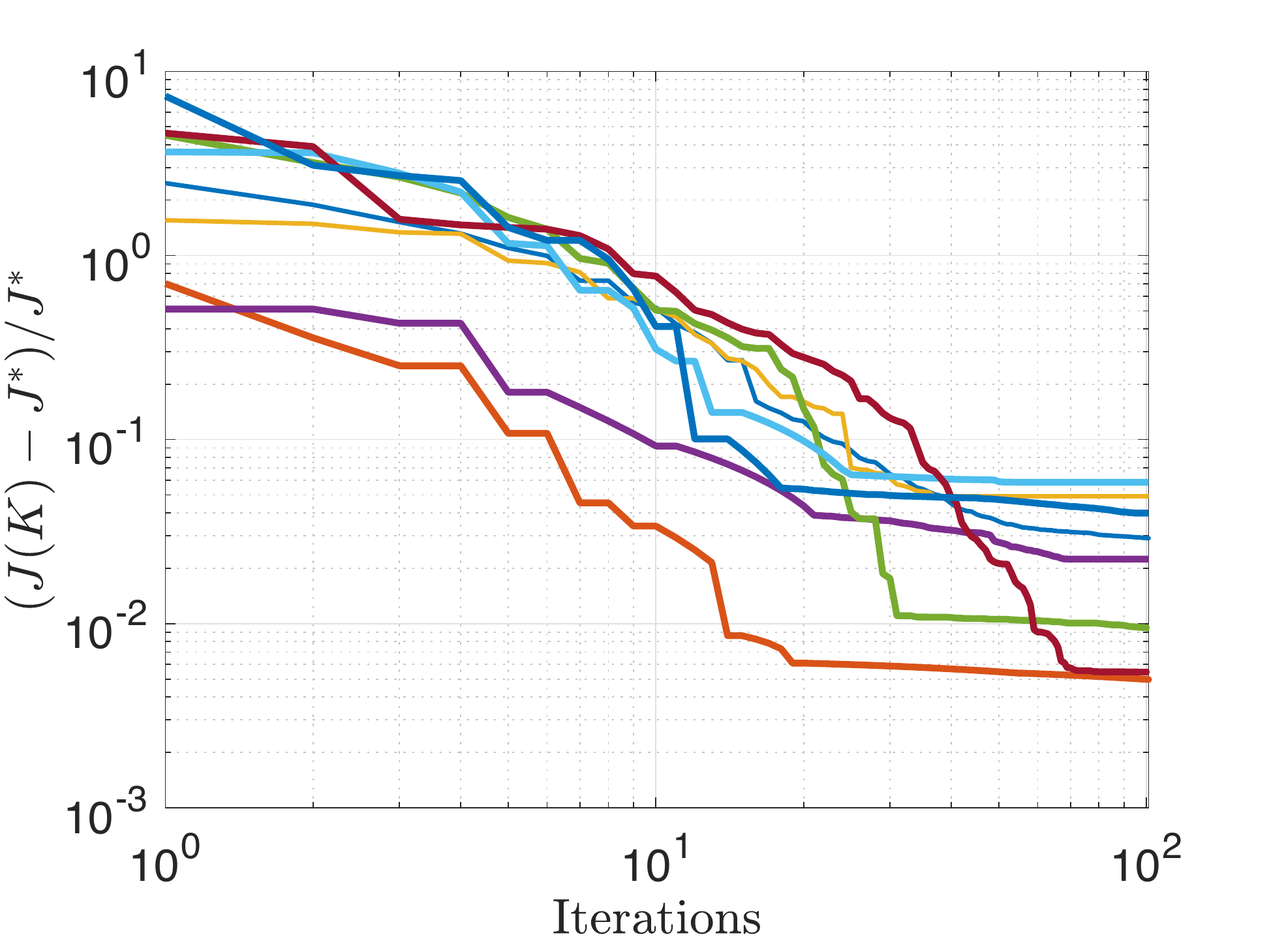}
\endminipage
 \caption{Simulation results for higher dimensional cases. Left: The trajectory of relative error of NS with problem matrices \eqref{set_matric2}. Middle: The trajectory of relative error of 8 randomly generated cases for NS method. Right: The trajectory of relative error of 8 randomly generated cases for GS method.} \label{hig_dim}
\end{figure}

\section{Further Discussions}

\subsection{Uniqueness of the minimizing set}

Our theory does not answer whether the global minimum for the $\mathcal{H}_\infty$ state-feedback synthesis problem is unique or not. As commented in Section \ref{sec:land}, the path-connectedness of $\mathcal{K}_\gamma$ for every $\gamma$ can be used to show that there exists a unique global minimizing set in a certain sense \cite[Sections 2\&3]{martin1982connected}. 
However, we are not able to rule out the possibility that the uniuqe global minimizing set actually consists of multiple points. Whether the global minimum of the $\mathcal{H}_\infty$ state-feedback control problem is unique or not is an interesting open question. 

\subsection{Possible generalizations for nonlinear systems}

For nonlinear systems, it is possible to generalize
Theorems \ref{thm3} and \ref{thm5}. The proofs for the finite-time complexity of finding $(\delta,\epsilon)$-stationary points can be generalized to the constrained policy optimization setting, as long as the cost function $J$ is coercive over the nonconvex feasible set $\mathcal{K}$. In contrast, the convergence to global minimum may be too much to ask for general nonlinear robust control problems.
If the cost function $J$ is not coercive, one may just add regularization to induce coerciveness such that one can still find some approximated $(\delta, \epsilon)$-stationary points provably. Such developments are worth more investigations in the future.

\end{document}